\newtheorem*{acknowledgement}{Acknowledgements}
\newtheorem{theorem}{Theorem}[section]
\newtheorem{lemma}[theorem]{Lemma}
\newtheorem{proposition}[theorem]{Proposition}
\newtheorem{definition}[theorem]{Definition}
\newcommand{\eps}{\epsilon}
\newcommand{\wh}{\widehat}
\newcommand{\ZR}{\mathbb{R}}
\newcommand{\bB}{{\bf B}}
\newcommand{\bT}{{\bf T}}
\newcommand{\eit}{e^{i t \Delta}}
\newcommand{\eir}{e^{i r \Delta}}
\newcommand{\hichi}{\raisebox{0.7ex}{\(\chi\)}}
\begin{document}

\title{A sharp Schr\"odinger maximal estimate in $\ZR^2$}

\author{Xiumin Du}
\address{Mathematics Department\\
University of Illinois at Urbana-Champaign\\
Urbana IL}
\email{xdu7@illinois.edu}
\author{Larry Guth}
\address{Department of Mathematics\\
MIT\\
Cambridge MA}
\email{lguth@math.mit.edu}
\author{Xiaochun Li}
\address{Mathematics Department\\
University of Illinois at Urbana-Champaign\\
Urbana IL}
\email{xcli@math.uiuc.edu}

\date{\today}

\begin{abstract}
We show that $\lim_{t \to 0} e^{it\Delta}f(x) = f(x)$ almost everywhere for all $f \in H^s (\mathbb{R}^2)$ provided that $s>1/3$. This result is sharp up to the endpoint. The proof uses polynomial partitioning and decoupling.
\end{abstract}

\maketitle

\section{Introduction}
\setcounter{equation}0

The solution to the free Schr\"{o}dinger equation
\begin{equation}
  \begin{cases}
    iu_t - \Delta u = 0, &(x,t)\in \mathbb{R}^n \times \mathbb{R} \\
    u(x,0)=f(x), & x \in \mathbb{R}^n
  \end{cases}
\end{equation}
is given by
$$
  e^{it\Delta}f(x)=(2\pi)^{-n}\int e^{i\left(x\cdot\xi+t|\xi|^2\right)}\widehat{f}(\xi) \, d\xi.
$$

We consider the following problem posed by Carleson in \cite{lC}: determine the optimal $s$ for which $\lim_{t \to 0}e^{it\Delta}f(x)=f(x)$ almost everywhere whenever
$f\in H^s(\mathbb{R}^n).$  Our main result is the following:

\begin{theorem}\label{Thm:PointConv}
 For every  $f\in H^s(\mathbb R^2)$ with $s>1/3$, $\lim_{t \to 0}e^{it\Delta}f(x)=f(x)$ almost everywhere.
\end{theorem}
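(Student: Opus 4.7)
The plan is to follow the by-now standard route of reducing Carleson's problem to a local maximal inequality, then to a fractal $L^3$ estimate, and finally to prove the fractal estimate using polynomial partitioning together with the Bourgain--Demeter $\ell^2$ decoupling theorem. By Stein's maximal principle and a density argument, almost everywhere convergence for $f \in H^s(\ZR^2)$ would follow from
\[
\Big\|\sup_{0<t<1}|e^{it\Delta}f|\Big\|_{L^3(B(0,1))} \lesssim \|f\|_{H^s(\ZR^2)}.
\]
After Littlewood--Paley decomposition and parabolic rescaling, together with pigeonholing on the measurable time function $t = t(x)$, this in turn reduces to the localized estimate
\[
\|e^{it(x)\Delta}f\|_{L^3(B(0,R))} \lesssim R^{s+\e}\|f\|_{L^2}
\]
for every $R\geq 1$, every measurable $t:\ZR^2\to[0,R]$, and every $f$ Fourier-supported in $B(0,1)\subset\ZR^2$, where the target exponent is $s=1/3$ (which is sharp up to the endpoint by Bourgain's counterexample).

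I would then recast this as a \emph{fractal $L^3$ estimate} in $\ZR^3$: viewing the graph of $x\mapsto(x,t(x))$ as a discrete set $X$ of unit cubes inside $B_R\subset\ZR^3$, a layer-cake/duality argument shows that the above inequality is equivalent to
\[
\|e^{it\Delta}f\|_{L^3(Y)} \lesssim R^{s+\e}\|f\|_{L^2}
\]
for every union $Y$ of unit cubes in $B_R$ satisfying the $2$-dimensional fractal condition $\#\{\text{cubes of }Y\text{ in any }B_r\}\lesssim r^{2+\e}$. The reformulation is crucial because the fractal dimension is precisely what polynomial partitioning is designed to exploit.

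To prove the fractal estimate I would use Guth's polynomial partitioning scheme. Decompose $f = \sum_\theta f_\theta$ over caps $\theta$ of side $R^{-1/2}$; the pieces $e^{it\Delta}f_\theta$ concentrate on tubes $T_\theta$ of width $R^{1/2}$ and length $R$. Apply the polynomial ham-sandwich theorem to produce a polynomial $P$ of controlled degree $D$ whose zero set partitions $B_R$ into cells carrying comparable $L^3$-mass. Classify wave packets as \emph{cellular}, \emph{transverse to $Z(P)$}, or \emph{tangential to $Z(P)$}. The cellular and transverse contributions feed into an induction on $R$ with a gain from cell counting, while the tangential packets, which lie in a tubular neighborhood of the (locally) two-dimensional variety $Z(P)$, are handled via the Bourgain--Demeter $\ell^2$ decoupling theorem for the paraboloid at scale $R^{1/2}$. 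The $2$-dimensional fractal hypothesis on $Y$ matches the dimension of the wall, which is what allows decoupling to be used efficiently.

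The main obstacle is calibrating the induction so that exactly $s=1/3$ drops out. The three contributions --- cellular gain from partitioning, transverse gain from broad-narrow analysis, and tangential loss from decoupling --- must fit together with no slack, since any excess in the tangential estimate worsens the final $s$. This forces a two-parameter induction hypothesis where both the ambient radius $R$ and the fractal dimension (here $2$) vary, in the spirit of Guth's sharp restriction arguments, but adapted to the new difficulty that the time function $t(x)$ is arbitrary and measurable rather than given by a smooth phase. Closing this induction with the right prioritization of scales is the delicate technical heart of the argument.
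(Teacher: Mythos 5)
Your outline matches the paper's high-level strategy --- reduce to a local $L^3$ maximal estimate, apply Littlewood--Paley and parabolic rescaling, then prove the key estimate by polynomial partitioning combined with Bourgain--Demeter decoupling --- but you diverge from the paper in how you dispatch the supremum in $t$, and that divergence is substantive enough to be worth comparing.

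The paper never reformulates the problem as a fractal $L^3$ estimate. Instead, it observes that the topological ham-sandwich argument is incompatible with an $L^p_x L^\infty_t$ norm, and so it replaces $L^p_x L^\infty_t$ by $L^p_x L^q_t$ with $q$ finite but very large (recovering the sup via dominated convergence as $q\to\infty$). Lemma \ref{Thm:Sandwich} and Theorem \ref{Thm:PolynPart2} are stated precisely for this mixed norm, and the whole induction in Theorem \ref{thm-4} runs in $L^p_x L^q_t$. Your proposal instead linearizes the maximal operator by a measurable time function $t(x)$, views its graph as a set of unit cubes in $\ZR^3$ satisfying a $2$-dimensional ball condition, and proposes to prove an $L^3$ estimate restricted to such sets. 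This fractal reformulation is a legitimate alternative route (and indeed is what subsequent work in higher dimensions adopts), but be aware it is not an \emph{equivalence} as you claim: the graph of a measurable function satisfies your fractal condition, so the fractal estimate is sufficient, but the converse direction would need argument. The paper's mixed-norm route avoids this linearization entirely, at the cost of threading the finite $q$ through every step.

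A second, and more serious, divergence is in the induction hypotheses. The paper inducts on the physical radius $R$ \emph{and} on the frequency-localization scale $1/M$ (the Fourier support is a ball $B(\xi_0, M^{-1})$), with the quantitative gain $M^{-\epsilon^2}$ built into the inductive bound \eqref{max5}. This $M$-induction is what closes the ``narrow'' case of the broad--narrow decomposition (the set $B_\epsilon$), and it replaces the more familiar parabolic-rescaling step, which the authors note interacts badly with mixed norms. Your proposed ``two-parameter induction on $R$ and the fractal dimension'' is a different bookkeeping and is not obviously coherent, since the fractal dimension here is fixed at $2$; there is nothing to run an induction over. What actually needs to vary is a frequency-localization parameter, not the dimension.

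Finally, you gloss over the paper's real technical core: the refined Strichartz estimates of Theorem \ref{refstrich} and its bilinear version Theorem \ref{bilrefstrich}, where $L^6$ mass is spread over $R^{1/2}$-cubes arranged in horizontal strips with $\sim\sigma$ cubes per strip, yielding the extra $\sigma^{-1/3}$ gain via decoupling, dyadic pigeonholing in $(\lambda,\eta,\mu)$, and a parabolic-rescaling induction down to scale $R^{1/2}$. Saying ``the tangential packets are handled via decoupling'' hides exactly where the sharp exponent $1/3$ is earned, and in particular omits the bilinear structure (Lemma \ref{L:wall} and Proposition \ref{Bil-prop}) that the paper uses to make the tangent contribution tractable. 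These are not cosmetic omissions: without the specific form of the refined Strichartz estimate, the exponent count does not close.
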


\noindent Recently, Bourgain \cite{jB16} gave examples showing that such convergence can fail for any $s < 1/3$, and so Theorem \ref{Thm:PointConv} is sharp up to the endpoint.

This problem originates from  Carleson \cite{lC}, who proved convergence
for $s \geq 1/4$ when $n=1$. Dahlberg and Kenig \cite{DK} showed that the convergence does not hold for $s<1/4$
in any dimension. Sj\"{o}lin \cite{pS} and Vega \cite{lV} proved independently the convergence for $s>1/2$ in all dimensions.
The sufficient condition for pointwise convergence was improved by Bourgain \cite{jB}, Moyua-Vargas-Vega \cite{MVV},
and Tao-Vargas \cite{TV}.  The best known sufficient condition in dimension $n=2$ was $s>3/8$ , due to Lee \cite{sL}
using Tao-Wolff's bilinear restriction method.  In general dimension $n\geq 2$, Bourgain \cite{jB12} showed the convergence for $s>1/2-1/(4n)$, 
 using multilinear methods.  When $n=2$, this approach gives a different proof of Lee's result for $s > 3/8$.  
 
 For many years, it had seemed plausible that convergence actually 
 holds for $s > 1/4$ in every dimension.  Only in 2012, Bourgain \cite{jB12} gave a counterexample showing that this is false in sufficiently high dimensions.  
Improved counterexamples were given by Luc\'a-Rogers \cite{LR} \cite{LR17} and Demeter-Guo \cite{DG}.  Very recently, in \cite{jB16}, Bourgain gave counterexamples showing
 that convergence can fail if  $s < \frac{n}{2(n+1)}$.  In particular, for $n=2$, convergence can fail if $s < 1/3$.
\\

We will follow the standard approach by bounding the associated maximal function.  We use $B^n(c, r)$ to represent a ball centered at $c$ with radius 
$r$ in $\mathbb R^n$,  and use $\hichi_{E}$ to denote the characteristic function of any measurable set $E$.
For brevity, $B(c,r)$ represents $B^2(c,r)$, a ball in $\mathbb R^2$. 

\begin{theorem} \label{Thm:SchMaxBound} 
For any $s > 1/3$, the following bound holds: for any function $f \in H^s(\mathbb{R}^2)$,
\begin{equation} \label{maxLp}
\left\| \sup_{0 < t \le 1} | e^{it \Delta} f| \right\|_{L^3(B(0,1))}  \le C_s \| f \|_{H^s(\mathbb{R}^2)}.
\end{equation}
\end{theorem}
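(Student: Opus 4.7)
The plan is to reduce the maximal estimate (1.2) to an $L^3$ bound on the Fourier extension operator for the paraboloid, and then prove that bound by a polynomial-partitioning induction on scale, with the wall contribution controlled via $\ell^2$-decoupling and a refined Strichartz inequality.

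\emph{Reduction to extension form.} A Littlewood--Paley decomposition combined with parabolic rescaling reduces (1.2) to the following claim, to be proved by induction on $R$: for every $\varepsilon>0$ there is a constant $C_\varepsilon$ such that for every $R\ge 1$, every $g\in L^2$ with Fourier support in the unit ball of $\mathbb{R}^2$, and every measurable $t:B(0,R)\to[0,R]$,
\[\|Eg(x,t(x))\|_{L^3(B(0,R))} \le C_\varepsilon R^\varepsilon \|g\|_{L^2},\]
where $Eg(y,s)=\int e^{i(y\cdot\eta+s|\eta|^2)}g(\eta)\,d\eta$. Once this is in hand, summing over the $R$ time slabs in $[0,R^2]$ at the rescaled scale and performing a Littlewood--Paley sum in frequency give (1.2) for any $s>1/3$.

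\emph{Wave packets and broad/narrow dichotomy.} Decompose $g=\sum_\theta g_\theta$ along caps $\theta$ of radius $R^{-1/2}$ in frequency, producing a wave-packet expansion of $Eg$ into pieces essentially concentrated on tubes $T_\theta$ of width $R^{1/2}$ and length $R$, oriented along the group velocity $2\eta_\theta$. Split $B(0,R)$ into a narrow set, where a single cap $g_\theta$ dominates $Eg(x,t(x))$, and a broad set, where at least two angularly separated caps contribute comparably. The narrow part reduces, after parabolic rescaling, to the same estimate at a smaller frequency scale and closes by induction.

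\emph{Polynomial partitioning and the wall.} On the broad set, invoke the polynomial partitioning theorem to produce a non-zero polynomial $P$ of degree $D$ (a small power of $R^\varepsilon$) such that $B(0,R)\setminus Z(P)$ splits into $\sim D^2$ open cells $O_i$ of roughly equal $L^3$ mass. Each tube $T_\theta$ meets at most $O(D)$ cells, giving $\sum_i \|g|_{O_i}\|_2^2 \lesssim D\|g\|_2^2$, and the inductive hypothesis applied cell-by-cell at the reduced physical scale closes the cellular part. The remaining wall contribution, supported in the $R^{1/2}$-neighborhood of $Z(P)$, is bounded using Bourgain--Demeter $\ell^2$-decoupling for the paraboloid, which controls $\|Eg\|_{L^6}$ with only an $R^\varepsilon$ loss, together with a refined Strichartz inequality stating that if $|Eg|$ is $L^3$-concentrated on a union of $R^{1/2}$-cubes then it is essentially carried by a controlled number of wave-packet tubes through those cubes.

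\emph{Main obstacle.} The hard part is handling wave-packet tubes that lie \emph{tangentially} to the partitioning variety $Z(P)$: for these the cell-counting argument is useless, and controlling them is what forces both the decoupling input and the refined Strichartz inequality. Establishing that refined Strichartz inequality with only $R^\varepsilon$ losses, and then carefully balancing the partitioning degree $D$ against the losses from decoupling so that the induction closes without giving back the critical exponent $1/3$, is the technical heart of the argument.
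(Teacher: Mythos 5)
Your plan diverges from the paper at the very start in a way that breaks the induction. You linearize the maximal operator with a measurable $t(x)$ and then run polynomial partitioning in the plane, obtaining a degree-$D$ polynomial whose zero set cuts $B(0,R)\subset\mathbb{R}^2$ into $\sim D^2$ cells. The tube-counting you state is correct ($\lesssim D$ cells per tube, so $\sum_i\|g_i\|_2^2\lesssim D\|g\|_2^2$), but pigeonholing over only $D^2$ cells produces a cell with $\|g_i\|_2\lesssim D^{-1/2}\|g\|_2$, and the cellular term becomes
\[
\|Eg\|_{L^p}^p\ \lesssim\ D^2\,\|Eg_i\|_{L^p(O_i)}^p\ \lesssim\ D^{2-p/2}\,\bigl(C_\varepsilon R^\varepsilon\|g\|_2\bigr)^p,
\]
which beats the degree factor only for $p>4$. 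The induction therefore does not close at $L^3$, and you would end up with a Sobolev exponent strictly worse than $1/3$. The paper gets the favorable exponent $D^{3-p}$, closing for all $p>3$, precisely because it does \emph{not} linearize: it replaces $L^p_xL^\infty_t$ by $L^p_xL^q_t$ with $q$ large but finite (so the continuity needed for the Borsuk--Ulam ham-sandwich argument survives) and partitions in the full space-time $\mathbb{R}^2\times\mathbb{R}$, producing $\sim D^3$ cells and hence a cell with $\|f_i\|_2\lesssim D^{-1}\|f\|_2$.

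The space-time partitioning is also what the wall analysis is built around: $Z(P)$ is then a two-dimensional surface in $\mathbb{R}^3$, and the tubes tangent to it through a fixed $R^{1/2}$-cube have their frequency centers confined to an $\sim R^{-1/2}$-strip in $B^2(0,1)$. That near-coplanarity is exactly what lets the paper apply two-dimensional $\ell^2$-decoupling in the refined Strichartz step (Lemma 7.5). Partitioning in the plane replaces $Z(P)$ by a curve and destroys this structure. Two smaller mismatches: the paper performs the broad/narrow (here: $B_\varepsilon$, tangent/transverse, and bilinear) decomposition on the wall \emph{after} partitioning rather than globally before it; and the refined Strichartz input is not a tube-counting statement but a quantitative $L^6$ gain of $\sigma^{-1/3}$ when the solution has comparable $L^6$ mass on $\sim\sigma$ cubes per horizontal strip (Theorem 7.1). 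Those are adaptable, but the planar-partitioning numerology is a genuine obstruction that the paper's space-time mixed-norm framework was designed specifically to avoid.
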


If the support of $\wh f$ lies in
$A(R)=\{\xi\in \ZR^2:|\xi|\sim R\}$, then Theorem \ref{Thm:SchMaxBound} boils down to the bound

\begin{equation} \label{maxL3R}
\left\| \sup_{0 < t \le 1} | e^{it \Delta} f| \right\|_{L^3(B(0,1))}  \le C_\eps R^{1/3 + \eps} \| f \|_{L^2}.
\end{equation}

\noindent After parabolic rescaling, this bound reduces to the following estimate for functions $f$ with $\wh f$ supported in $A(1)$.

\begin{theorem}\label{thm-3}
For any $\epsilon >0$, there exists a constant $C_\epsilon$ such that
\begin{equation}\label{max4}
 \big\|\underset{0<t\leq R}{\text{sup}}|e^{it\Delta}f|\big\|_{L^3(B(0,R))} \leq
C_\epsilon R^{\epsilon} \|f\|_2
\end{equation}
 holds for all $R\geq 1$ and all $f$ with ${\rm supp}\widehat{f}\subset A(1)=\{\xi\in \ZR^2:|\xi|\sim 1\}$.
\end{theorem}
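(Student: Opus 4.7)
The plan is to linearize the supremum: pick a measurable $t:B(0,R)\to(0,R]$ and bound
$$\int_{B(0,R)}|e^{it(x)\Delta}f(x)|^{3}\,dx\leq C_\epsilon R^{3\epsilon}\|f\|_{2}^{3}.$$
Writing $Ef(x,\tau):=e^{i\tau\Delta}f(x)$, this is the $L^3$ norm of the Fourier extension operator for a piece of the paraboloid, evaluated against surface measure on the graph $\Gamma=\{(x,t(x)):x\in B(0,R)\}\subset\mathbb{R}^{3}$. The problem thus becomes a restriction-type estimate for the paraboloid in $\mathbb{R}^{3}$ in which, instead of integrating over all of $\mathbb{R}^{3}$, one integrates over $\Gamma$.

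I would then carry out an induction on $R$ inside the polynomial partitioning framework developed by Guth for the restriction problem. Perform a wave packet decomposition $f=\sum_\theta f_\theta$ at scale $R^{1/2}$: each $\widehat{f_\theta}$ is supported in a cap $\theta\subset A(1)$ of radius $R^{-1/2}$, so that $Ef_\theta$ is essentially concentrated on a tube $T_\theta$ of dimensions $R^{1/2}\times R^{1/2}\times R$ aligned with the normal to $\theta$. Apply polynomial partitioning with a polynomial $P$ of controlled degree $D$ to the measure $|Ef|^{3}$ on $\Gamma$, and split the total contribution into a \textbf{cellular} term, handled by the induction hypothesis one cell at a time, and an \textbf{algebraic} term in which $\Gamma$ concentrates in an $R^{1/2}$-neighborhood of $Z(P)$. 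The algebraic term is further divided into a \emph{transverse} piece, where tubes cross the variety and one applies Bennett--Carbery--Tao multilinear restriction at an appropriate scale, and a \emph{tangential} piece, where tubes lie close to $Z(P)$.

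The key new ingredient will be a \emph{refined Strichartz inequality}: if $Ef$ concentrates on the union of $N$ disjoint unit balls inside $B^{3}(0,R)$, then
$$\|Ef\|_{L^{3}(\bigcup\text{balls})}\leq C_\epsilon R^\epsilon N^{-1/6} R^{1/4}\|f\|_{2}.$$
This is obtained by combining Bourgain--Demeter $\ell^{2}$ decoupling of the truncated paraboloid into $R^{-1/2}$-caps (which on $L^{3}$ costs an $R^{\epsilon}$-loss and yields an $\ell^{2}$ sum over caps) with the locally constant property of each wave packet on unit balls. The improvement over the trivial bound is the factor $N^{-1/6}$, which kicks in when the $L^3$ mass is spread out over many balls. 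Applied to the tangential case, where the thin neighborhood of $Z(P)$ confines the active balls to a low-dimensional family, this refined Strichartz is what breaks through the $R^{1/3}$ barrier and ultimately yields the desired $R^{\epsilon}$ bound for the linearized maximal operator.

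The main obstacle will be the tangential case together with the bookkeeping needed to close the induction. One must show that wave packets concentrated in the thin neighborhood of a low-degree variety organize themselves on a family of unit balls structured enough for the refined Strichartz to deliver a genuine gain, and one must balance the polynomial degree $D$, the wave packet scale $R^{1/2}$, the number of surviving tubes, and the $L^{2}$ mass they carry so that the induction on $R$ closes with the prescribed $R^{\epsilon}$ loss rather than a polynomial loss.
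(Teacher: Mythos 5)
Your high-level architecture (wave packets, polynomial partitioning, cellular/wall split, a decoupling-based refined Strichartz) is the right one, but the quantitative heart of your plan --- the claimed refined Strichartz --- is false as stated, and the version that actually works requires an idea you haven't identified.

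\textbf{The refined Strichartz you wrote down is false.} You claim that if $Ef$ concentrates on $N$ disjoint unit balls in $B^3(0,R)$, then
$\|Ef\|_{L^3(\cup \text{balls})}\leq C_\epsilon R^\epsilon N^{-1/6}R^{1/4}\|f\|_2$. Take $f=f_\theta$ a single wave packet: $|Ef_\theta|\sim R^{-1/2}\|f_\theta\|_2$ on a tube of dimensions $R^{1/2}\times R^{1/2}\times R$, so $\|Ef_\theta\|_{L^3(\text{tube})}\sim (R^2)^{1/3}\cdot R^{-1/2}\|f_\theta\|_2= R^{1/6}\|f_\theta\|_2$, while the tube is a union of $N\sim R^2$ unit balls and your bound would give $N^{-1/6}R^{1/4}\|f_\theta\|_2 = R^{-1/12}\|f_\theta\|_2 \ll R^{1/6}\|f_\theta\|_2$. (Also, $\ell^2$ decoupling at $L^3$ only gives an $\ell^2$ combination of the caps, which for disjoint wave packets is \emph{lossy} relative to the true $\ell^3$ behavior, so decoupling alone cannot produce an $N^{-1/6}$ gain of this kind.) The correct refined Strichartz in the paper (Theorem \ref{refstrich}) is at the exponent $L^6$, not $L^3$; it is stated for lattice $R^{1/2}$-cubes (not unit balls) arranged in horizontal strips with $\sim\sigma$ cubes per strip; and --- crucially --- it carries the hypothesis that $f$ is concentrated in wave packets from $\bT_Z(E)$, i.e.\ tangent to a degree-$D$ variety. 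The reason $L^6$ appears is that under this tangency hypothesis the wave packets through any fixed $R^{1/2}$-cube $Q\subset N_{ER^{1/2}}(Z)$ are nearly coplanar (Lemma \ref{deconQj}), so after restricting $e^{it\Delta}f$ to a suitable $2$-plane one sees Fourier support in an $R^{-1/2}$-neighborhood of a $1$-dimensional parabola and applies \emph{two-dimensional} Bourgain--Demeter decoupling, whose critical exponent is $6$. In other words, the wall case is effectively a $1{+}1$ dimensional Schr\"odinger problem, and $L^6$ is the $1{+}1$ dimensional Strichartz exponent. Without this reduction, the estimate you want does not hold; this is the missing idea.

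Two smaller structural differences from the paper, worth knowing about: (i) the paper deliberately avoids linearizing the supremum. Linearization puts $|Ef|^3$ on a graph $\Gamma=\{(x,t(x))\}$, a singular measure in $\mathbb{R}^3$, which does not fit the ham-sandwich lemma as the paper formulates it (Lemma \ref{Thm:Sandwich} requires an $L^1_xL^r_t$ function). Instead the paper replaces $L^p_xL^\infty_t$ by $L^p_xL^q_t$ with $q$ finite but large, proves Theorem \ref{thm-4} for $p>3$, and recovers $p=3$ by interpolating with a crude $L^2_xL^\infty_t$ bound. (ii) The transverse-to-$Z(P)$ wall contribution is not handled by Bennett--Carbery--Tao multilinear restriction; it is handled by the same induction on the radius, together with the geometric fact (Lemma \ref{L:trans}) that a tube is transverse to $W$ in at most $\mathrm{Poly}(D)$ of the $R^{1-\delta}$-balls $B_j$, which controls the $L^2$ mass loss. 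The bilinear structure in the paper is introduced \emph{only} for the tangent term (Lemma \ref{L:wall}), and that bilinearity is used to exploit transversality between tangent wave packets in the refined Strichartz, Theorem \ref{bilrefstrich}. Until you replace your $L^3$/unit-ball refined Strichartz by the $L^6$/$R^{1/2}$-cube/tangent-to-variety version and build in the reduction to $1{+}1$ dimensions, the argument cannot close.
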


%\begin{remark}
%\end{remark}

Here is an outline of the proof of Theorem \ref{thm-3}.
The proof uses polynomial partitioning.  This technique was introduced by Nets Katz and the second author in \cite{lGnK}, where it was applied to incidence geometry.  In \cite{lG} and \cite{lG16}, the second author applied this technique to restriction estimates in Fourier analysis.  Polynomial partitioning is a divide and conquer technique.  We begin by finding a polynomial whose zero set divides some object of interest into equal pieces.  For instance, in \cite{lGnK}, it was proven that for any finite volume set $E \subset \ZR^3$ and any degree $D \ge 1$, there is a polynomial $P$ of degree at most $D$ so that $\ZR^3 \setminus Z(P)$ is a union of $\sim D^3$ disjoint open sets $O_i$, and the volumes $| O_i \cap E|$ are all equal.  Hence for any $i$, $|E| \lesssim D^3 | O_i \cap E|$.  In our paper, we choose the polynomial $P$ to behave well with respect to the $L^p_x L^q_t$ norm of $e^{i t \Delta} f$.  For any $p \le q < \infty$ and any degree $D \ge 1$, we show that there is a polynomial $P$ of degree at most $D$ so that $\ZR^3 \setminus Z(P)$ is a union of $\sim D^3$ disjoint open sets $O_i$, and for any $i$,

\begin{equation} \label{equiLpLq}
\| e^{i t \Delta} f \|_{L^p_x L^q_t(B(0,R) \times [0,R])}^p \lesssim D^3 \| \hichi_{O_i} e^{i t \Delta} f \|_{L^p_x L^q_t (B(0,R) \times [0,R])}^p.
\end{equation}

\noindent (To prove Theorem \ref{thm-3}, we will use $q$ finite but very large and $p$ close to 3.  The degree $D$ will be a tiny power of $R$, so $D$ is large compared to 1, but very small compared to $R$.)

Breaking spacetime into cells $O_i$ is useful because of the way it interacts with the wave packet decomposition of $e^{i t \Delta} f$, which we now recall.  We decompose $f$ into pieces that are localized in both physical space and frequency space.  We tile the physical space $B(0,R)$ with $R^{1/2}$-cubes $\nu$, and we tile the frequency space $B(0,1)$ with $R^{-1/2}$-cubes $\theta$.  Then we decompose $f$ as $f = \sum_{\theta, \nu} f_{\theta, \nu}$, where $f_{\theta,\nu}$ is essentially supported on $\nu$ in physical space and essentially supported on $\theta$ in frequency space.  Each function $e^{i t \Delta} f_{\theta, \nu}$ is called a wave packet.  The restriction of $e^{i t \Delta} f_{\theta, \nu}$ to the domain $B(0,R) \times [0,R]$ is essentially supported on a tube $T_{\theta, \nu}$ of radius $R^{1/2}$ and length $R$.  This tube intersects the time slice $\{ t = 0 \}$ at $\nu$, and the direction of the tube depends on $\theta$.

A key fact in the applications of polynomial partitioning in combinatorics is that a line can enter at most $D + 1$ of the cells $O_i$.  To see this, we note that the polynomial $P$ can vanish at most $D$ times along a line, unless it vanishes on the whole line, and so a line can cross $Z(P)$ at most $D$ times.  A wave packet $e^{i t \Delta} f_{\theta, \nu}$ is supported on a tube $T_{\theta, \nu}$ of radius $R^{1/2}$.  This tube can potentially enter many or even all the cells $O_i$, but it cannot penetrate deeply into very many cells.  We define $W$ to be the $R^{1/2}$-neighborhood of $Z(P)$ in $B(0,R) \times [0,R]$, and we define $O_i'$ to be $O_i \setminus W$.  Now the central line of $T_{\theta, \nu}$ can enter at most $D+1$ of the original cells $O_i$, and so the tube $T_{\theta, \nu}$ can enter at most $D+1$ of the smaller cells $O_i'$.  In other words, each wave packet $e^{i t \Delta} f_{\theta, \nu}$ is essentially supported on the union of $W$ and $D+1$ cells $O_i'$.

We can use induction to study $e^{i t \Delta} f$ on each smaller cell $O_i'$.  To study $e^{i t \Delta} f$ on a cell $O_i'$, we only need to take account of those wave packets that intersect $O_i'$.  Therefore, we define $f_i$ to be the sum of $f_{\theta, \nu}$ over those pairs $(\theta, \nu)$ for which $T_{\theta, \nu}$ enters $O_i'$.  On the cell $O_i'$, $e^{i t \Delta} f$ is essentially equal to $e^{i t \Delta} f_i$.  We can control the $L^2$ norms of the $f_i$ by using the fact that $f_{\theta, \nu}$ are (approximately) orthogonal and the fact that each tube $T_{\theta, \nu}$ enters $\lesssim D$ smaller cells $O_i'$.  In particular, we will prove that

$$ \sum_i \| f_i \|_2^2 \lesssim D \| f \|_2^2. $$

\noindent We can now use induction to control $e^{i t \Delta} f$ on each cell $O_i'$.  In this way, we get good control of the contribution to $\| e^{i t \Delta} f \|_{L^p_x L^q_t (B(0,R) \times [0,R])}$ coming from the union of all smaller cells $O_i'$.  It remains to control the contribution coming from $W$.

The most difficult scenario is the following: $e^{i t \Delta} f$ is a sum of wave packets $e^{i t \Delta} f_{\theta, \nu}$ for which the tubes $T_{\theta, \nu}$ are all contained in $W$.  The polynomial partitioning method allows us to reduce the original problem to this special scenario.  This scenario indeed occurs in Bourgain's example in \cite{jB16}.  Let us take a moment to describe this example.

In the example from \cite{jB16}, the zero set $Z(P)$ can be taken to be a plane $t = x_1$.  The set $W$ is a planar slab of thickness $R^{1/2}$.  The solution $e^{i t \Delta} f$ is essentially supported in $W$.  On the plane $t = x_1$, $e^{i t \Delta} f$ is a solution of the Schr\"odinger equation in 1 + 1 dimensions.  In other words, we can choose coordinates $(y,s)$ on this plane and an initial data $g$ so that $e^{i s \Delta} g$ is essentially equal to $e^{i t \Delta} f$ on the plane.  Also, $|e^{i t \Delta} f(x_1, x_2)|$ is approximately constant as we vary $x_1$ within the slab $W$.  The initial data is chosen so that $|e^{i s \Delta} g(y)|$ is large on a set $X$ of $\sim R^{3/2}$ unit squares in $[0,R] \times [0, R]$.  It follows that $|e^{i t \Delta} f(x)|$ is large on a set of $\sim R^{3/2}$ 3-dimensional rectangles of dimensions $R^{1/2} \times 1 \times 1$ in $B(0,R) \times [0,R]$.  Moreover, the projections of these rectangles are roughly disjoint, and so they cover a positive proportion of $B(0,R)$.  Therefore $\sup_{0 < t < R} | e^{i t \Delta} f(x)|$ is large on a positive proportion of $B(0,R)$.

In this construction, the set $X$ needs to be fairly sparse because the projections of the $R^{1/2} \times 1 \times 1$ rectangles need to be disjoint in $B(0,R)$.  In particular, there can be at most $R^{1/2}$ unit squares of $X$ in any $R^{1/2}$-ball in $[0,R] \times [0,R]$.  In the example of \cite{jB16},
$| e^{i s \Delta} g| \sim R^{-5/12} \| g \|_{L^2([0,R])}$ on the set $X$.  During our proof, we will need to show that this quantity $R^{-5/12} \| g \|_{L^2}$ could not be any larger.  In rough terms, we need to show that a solution $e^{i s \Delta} g$ cannot focus too much on a set $X$ which is sparse and spread out.

We will prove such bounds using the $l^2$ decoupling theorem of Bourgain and Demeter \cite{BD}.  We think of these bounds as refinements of the Strichartz inequality.  Here is one such estimate:

\begin{theorem} \label{refstrichintro} Suppose that $g: \ZR \rightarrow \mathbb{C}$ has frequency supported in $B^1(0,1)$.  Suppose that $Q_1, Q_2, ...$ are lattice  $R^{1/2}$-cubes in $[0,R]^2$, so that

$$ \| e^{i t \Delta} g \|_{L^6(Q_j)} \textrm{ is essentially constant in $j$}. $$

\noindent Suppose that these cubes are arranged in horizontal strips of the form $\ZR \times \{t_0, t_0 + R^{1/2} \}$, and that each strip contains $\sim \sigma$ cubes $Q_j$.  Let $Y$ denote $\bigcup_j Q_j$.  Then
for any $\eps > 0$, 

$$ \| e^{i t \Delta} g \|_{L^6(Y)} \le C_\eps R^\eps \sigma^{-1/3} \| g \|_{L^2}. $$
\end{theorem}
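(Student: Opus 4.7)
The plan is to combine the Bourgain--Demeter $\ell^2$ decoupling theorem for the parabola with the local constancy of frequency-localized pieces, and then exploit both the hypothesis that $\|e^{it\Delta} g\|_{L^6(Q_j)}$ is $j$-independent and the horizontal-strip structure of $Y$.

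First, I would partition the frequency interval $B^1(0,1)$ into $\sim R^{1/2}$ subintervals $\theta$ of length $R^{-1/2}$ and decompose $g = \sum_\theta g_\theta$ with $\wh{g_\theta}$ supported in $\theta$. Each piece $e^{it\Delta}g_\theta$ has spacetime Fourier support in a set of diameter $R^{-1/2}$, hence is essentially constant on $R^{1/2}$-cubes, giving the $L^6$-to-$L^2$ upgrade
$$ \|e^{it\Delta}g_\theta\|_{L^6(Q_j)}^2 \lesssim R^{-2/3}\|e^{it\Delta}g_\theta\|_{L^2(Q_j)}^2 . $$
Combined with $\ell^2$ decoupling applied on each $R^{1/2}$-cube $Q_j$ at caps of size $R^{-1/2}$, this gives
$$ \|e^{it\Delta}g\|_{L^6(Q_j)}^2 \lesssim R^\eps R^{-2/3}\sum_\theta \|e^{it\Delta}g_\theta\|_{L^2(Q_j)}^2 . $$

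Write $\alpha := \|e^{it\Delta}g\|_{L^6(Q_j)}$, which is independent of $j$ by hypothesis. Summing the previous inequality over the $M$ cubes in $Y$ (and bounding the sum on the right by $\sum_\theta \|e^{it\Delta}g_\theta\|_{L^2([0,R]^2)}^2$), together with the temporal Plancherel identity $\|e^{it\Delta}g_\theta\|_{L^2([0,R]^2)}^2 = R\|g_\theta\|_2^2$ and the $L^2$-orthogonality $\sum_\theta \|g_\theta\|_2^2 \lesssim \|g\|_2^2$, yields
$$ M\alpha^2 \lesssim R^\eps R^{-2/3}\cdot R\|g\|_2^2 = R^\eps R^{1/3}\|g\|_2^2 . $$
The horizontal-strip hypothesis fixes $M$: there are $\sim R^{1/2}$ strips in $[0,R]^2$ and each contains $\sim \sigma$ cubes, so $M \sim \sigma R^{1/2}$. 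Substituting gives $\alpha^2 \lesssim R^\eps R^{-1/6}\sigma^{-1}\|g\|_2^2$, and therefore
$$ \|e^{it\Delta}g\|_{L^6(Y)}^6 = M\alpha^6 \lesssim \sigma R^{1/2}\cdot R^{6\eps} R^{-1/2}\sigma^{-3}\|g\|_2^6 = R^{6\eps}\sigma^{-2}\|g\|_2^6 , $$
which is the claim after taking sixth roots.

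The main technical obstacle is justifying the $\ell^2$ decoupling at the smaller $R^{1/2}$-cube scale together with the local-constancy step in the presence of Schwartz tails. I would handle this by replacing $\hichi_{Q_j}$ with a rapidly decreasing weight of the form $w_{Q_j}(x,t) = (1 + R^{-1/2}\mathrm{dist}((x,t),Q_j))^{-N}$, invoking the standard weighted versions of both the locally-constant-property argument and the Bourgain--Demeter inequality, and verifying that the tail contributions, summed across the $R^{1/2}$ strips, are absorbed by the $R^\eps$ factor and do not spoil the $\sigma^{-1/3}$ gain.
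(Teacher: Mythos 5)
Your proposal contains a genuine gap in the decoupling step. You claim that on each $R^{1/2}$-cube $Q_j$ the Bourgain--Demeter $\ell^2$ decoupling theorem can be applied at caps of size $R^{-1/2}$, yielding
$$ \|e^{it\Delta}g\|_{L^6(Q_j)}^2 \lesssim R^\varepsilon R^{-2/3}\sum_\theta \|e^{it\Delta}g_\theta\|_{L^2(Q_j)}^2 . $$
This is not a valid application of the theorem. Decoupling the $R^{-1}$-neighborhood of the parabola into $R^{-1/2}\times R^{-1}$ caps requires the spatial ball to have radius at least $R$, not $R^{1/2}$. When $e^{it\Delta}g$ is localized to an $R^{1/2}$-cube $Q_j$, the Fourier support gets blurred into the $R^{-1/2}$-neighborhood of the parabola, and the only decoupling available on $Q_j$ is into caps of dimensions $R^{-1/4}\times R^{-1/2}$, i.e.\ intervals $\tau$ of length $R^{-1/4}$. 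Heuristically, on a ball of radius $R^{1/2}$ two wave packets whose frequencies differ by $R^{-1/2}$ have directions that differ by less than the angular resolution $R^{-1/2}$, so they cannot be separated at that spatial scale.

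If you redo the computation with the correct cap scale, each $e^{it\Delta}g_\tau$ (with $|\tau|=R^{-1/4}$), localized to $Q_j$, has Fourier support of measure $\sim R^{-3/4}$, so Bernstein gives $\|e^{it\Delta}g_\tau\|_{L^6(Q_j)}^2 \lesssim R^{-1/2}\|e^{it\Delta}g_\tau\|_{L^2(w_{Q_j})}^2$, not $R^{-2/3}$. Running your argument with this replacement yields $M\alpha^2 \lesssim R^{\varepsilon+1/2}\|g\|_2^2$ and hence $\|e^{it\Delta}g\|_{L^6(Y)} \lesssim R^{1/12+\varepsilon}\sigma^{-1/3}\|g\|_2$, which is off by a factor $R^{1/12}$. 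This is why a single-scale decoupling cannot prove the theorem: the paper's proof (given in Section \ref{locref} for the more general Theorem \ref{refstrich}, of which this statement is the planar special case) is a genuinely multi-scale argument. It decouples on each $R^{1/2}$-cube only into $R^{-1/4}$-caps (Lemma \ref{deconQj}), organizes these into $R^{3/4}\times R^{3/4}\times R$ boxes $\Box$, and then applies the theorem itself at scale $R^{1/2}$ by induction on the radius after parabolic rescaling (inequality (\ref{indpar})). The horizontal-strip hypothesis is used not merely through the trivial count $M\lesssim\sigma R^{1/2}$, but through the geometric multiplicity estimate (\ref{geom1}), which bounds how many of the $Y_\Box$ can share a given cube. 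That recursive self-similar structure closes the $R^{1/12}$ gap that your one-step argument leaves open.
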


 \begin{figure}  [ht]
\centering
\includegraphics[scale=.6]{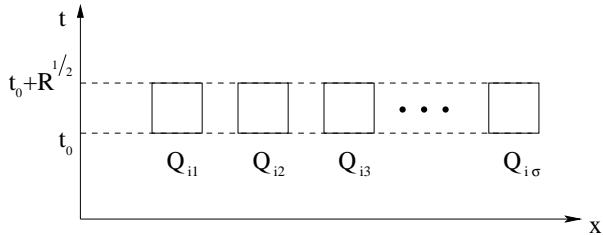}
\caption{\small{ $\sim \sigma$ many cubes in a horizontal strip }}
\label{figure:Fig-strip}
\end{figure}

The Strichartz inequality says that $\| e^{i t \Delta} g\|_{L^6([0,R]^2)} \lesssim \| g \|_{L^2}$.  Theorem \ref{refstrichintro} says that we get a stronger estimate when the solution $e^{i t \Delta} g$ is spread out in space.  To get a sense of what the theorem says, consider the following example.  Suppose that $e^{i t \Delta} g$ is a sum of $\sigma$ wave packets supported on disjoint $R^{1/2} \times R$ rectangles.  We can take $Y$ to be the union of these rectangles.  By scaling, we can suppose that $| e^{i t \Delta} g | \sim 1$ on these $\sigma$ rectangles and negligibly small elsewhere, and then a direct calculation shows that $\| e^{i t \Delta} g \|_{L^6(Y)} \sim \| e^{i t \Delta} g \|_{L^6([0,R]^2)} \sim \sigma^{-1/3} \| g \|_{L^2([0,R])}$.
So Theorem \ref{refstrichintro} roughly says that if $e^{i t \Delta} g$ is ``as spread out as'' $\sigma$ disjoint wave packets, then its $L^6$ norm cannot be much bigger than the $L^6$ norm of $\sigma$ disjoint wave packets.  

This theorem helps us to control the size of $e^{i t \Delta} g$ on a sparse, spread out set $X$ as above.  Suppose that the function $e^{i t \Delta} g$ is evenly spread out on $[0,R]^2$ in the sense that $\| e^{ i t \Delta} g \|_{L^6(Q)}$ is roughly constant among all $R^{1/2}$-boxes $Q \subset [0,R]^2$.  
 In this case, we can take $\sigma = R^{1/2}$ in Theorem \ref{refstrichintro}, which gives 

$$ \| e^{i t \Delta} g \|_{L^6([0,R]^2)} \lesssim R^{-1/6 + \eps} \| g \|_{L^2}. $$

\noindent In the example from \cite{jB16}, $X$ contains $\sim R^{1/2}$ unit squares in each $R^{1/2}$-box of $[0,R]^2$, and each of these boxes indeed has a roughly equal value of $\| e^{i t \Delta} g \|_{L^6(Q)}$.  If $| \eit g | \sim H$ on the set $X$, then Theorem \ref{refstrichintro} gives

$$ H |X|^{1/6} \lesssim  \| e^{i t \Delta} g \|_{L^6([0,R]^2)} \lesssim R^{-1/6 + \eps} \| g \|_{L^2}. $$

\noindent Since $|X| \sim R^{3/2}$, we get the bound $H \lesssim R^{-5/12 + \eps} \| g \|_{L^2}$.  This upper bound matches the behavior of the example from \cite{jB16} up to a factor $R^\eps$.  

Theorem \ref{refstrichintro} lets us deal with the case that $Z(P)$ is a plane.  We need to deal with the more general case that $Z(P)$ is a possibly curved surface of degree at most $D$.  We prove a more general version of Theorem \ref{refstrichintro}, Theorem \ref{refstrich}, which covers the case of wave packets concentrated into a curved surface.

\begin{acknowledgement} The second author is supported by a Simons Investigator grant.
\end{acknowledgement}

\section{Main inductive theorem}

Here we state a slightly more complicated theorem which will imply all the theorems in the introduction.  Our proof uses induction, and we need the slightly more complicated formulation to make all the inductions work.  First of all, the polynomial partitioning involves a topological argument, and the topological argument does not work well with the $\sup$ appearing in our maximal function.  Therefore, we replace the norm $L^p_x L^\infty_t$ with the norm $L^p_x L^q_t$ for $q$ very large.  Another technical issue has to do with parabolic rescaling.  Suppose that $\wh f$ is supported in a smaller ball $B(\xi_0, M^{-1}) \subset B(0,1)$.  In this situation, one can often apply parabolic rescaling to reduce the problem at hand to a problem on a smaller ball in physical space.  However, the change of coordinates in such a parabolic rescaling does not interact well with mixed norms of the form $L^p_x L^q_t$.  Therefore, we instead do induction on the size of the ball $B(\xi_0, M^{-1})$, proving slightly stronger bounds when the ball is small.  Taking account of these small issues, we formulate our result in the following way:

\begin{theorem}\label{thm-4}
For $p > 3$, for any $\epsilon >0$, there exists a constant $C_\epsilon$ such that for any $q>1/\epsilon^4$,
\begin{equation}\label{max5}
 \big\|e^{it\Delta}f\big\|_{L^p_x L^q_t (B(0,R)\times [0,R])} \leq
C_{p, \epsilon} M^{-\epsilon^2}R^{\epsilon} \|f\|_2
\end{equation}
 holds for all $R\geq 1$, any $\xi_0 \in B^2(0,1)$, any $ M\geq 1$ and all $f$ with ${\rm supp}\widehat{f}\subset B^2(\xi_0, M^{-1})$.
\end{theorem}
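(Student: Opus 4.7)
The plan is to prove \eqref{max5} by induction on $R$, with a tandem argument on the parameter $M$, using polynomial partitioning tailored to the $L^p_x L^q_t$ norm together with the wave packet decomposition described in the introduction. Fix $q > 1/\epsilon^4$, choose a partitioning degree $D$ that is a tiny power of $R$ (say $D = R^{\delta}$ with $\delta \ll \epsilon^2$), and assume \eqref{max5} with the \emph{same} constant $C_{p,\epsilon}$ at all smaller scales $R' < R$ and all admissible $(\xi_0', M')$. A routine base case at $R \lesssim 1$ comes from the trivial $L^\infty$ bound combined with H\"older.

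I would first decompose $f = \sum_{\theta,\nu} f_{\theta,\nu}$ into wave packets: tile $B(\xi_0, M^{-1})$ in frequency by $R^{-1/2}$-caps $\theta$ and physical space by dual $R^{1/2}$-cubes $\nu$, so each $e^{it\Delta}f_{\theta,\nu}$ is essentially supported on a tube $T_{\theta,\nu}$ of radius $R^{1/2}$ and length $R$. Next, I apply polynomial partitioning of degree $D$ adapted to the $L^p_x L^q_t$ norm as in \eqref{equiLpLq}, obtaining $\sim D^3$ cells $O_i$. Let $W$ denote the $R^{1/2}$-neighborhood of $Z(P)$ and set $O_i' = O_i \setminus W$. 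Either the cellular union $\bigcup_i O_i'$ or the wall $W$ carries the bulk of $\|e^{it\Delta}f\|_{L^p_x L^q_t}^p$, and I would handle the two cases separately, showing in each that the inductive constant $C_{p,\epsilon}$ reproduces itself with a strictly better power.

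In the cellular case, let $f_i$ be the sum of $f_{\theta,\nu}$ over pairs with $T_{\theta,\nu}$ entering $O_i'$. Because a line meets $Z(P)$ at most $D$ times, each tube enters $\lesssim D$ of the $O_i'$, and approximate $L^2$-orthogonality of the wave packets gives $\sum_i \|f_i\|_2^2 \lesssim D\|f\|_2^2$. Applying the induction hypothesis on each $O_i'$ and combining with \eqref{equiLpLq} produces a polynomial loss in $D$ that is defeated by the $R^\epsilon$ factor, since $p > 3$ forces $3/p - 1 < 0$. In the algebraic (wall) case, I further split the wave packets into transverse ones, whose tubes cross $W$ in a single short segment of length $\sim R^{1/2}$, and tangential ones, whose tubes lie essentially inside $W$ along their full length. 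The transverse contribution is handled by covering $W$ by balls of radius $\sim R/D$ and invoking the induction hypothesis at the smaller scale. The tangential contribution is the delicate one, and is controlled via the curved refined Strichartz estimate (Theorem \ref{refstrich}), a generalization of Theorem \ref{refstrichintro} to $Z(P)$ a degree-$D$ variety, proved using the $l^2$ decoupling theorem of Bourgain--Demeter.

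The main obstacle is this tangential subcase: it is precisely the regime of Bourgain's counterexample in \cite{jB16}, where all wave packets cluster on a thin curved slab and the sharp $L^3$-threshold is borderline. Here one must pigeonhole over horizontal $R^{1/2}$-strips inside the wall, extract a dominant density $\sigma$ of active wave packet cubes per strip, and invoke the $\sigma^{-1/3}$ gain from the refined Strichartz theorem; this is precisely what matches the exponent $1/3$ in Theorem \ref{thm-3}. The parameter $M$ is essential in this step because a parabolic rescaling of the tangential problem yields a sub-instance of \eqref{max5} with a larger effective $M$, and the assumed factor $M^{-\epsilon^2}$ must be carefully tracked through both the orthogonality in the cellular case and the decoupling in the tangential case in order to absorb all combinatorial losses coming from the degree $D$ and close the induction.
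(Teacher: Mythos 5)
Your outline captures the skeleton of the paper's argument (induction on $R$ and $M$, polynomial partitioning adapted to the mixed norm, cellular vs.\ wall dichotomy, transverse/tangential split, refined Strichartz via decoupling for the tangential term), but there are two genuine gaps that the paper has to work hard to close and that your sketch glosses over.

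First, you never make the broad/narrow (bilinear) reduction, and this is essential. The linear refined Strichartz estimate (Theorem \ref{refstrich}) requires the active $R^{1/2}$-cubes to be organized into horizontal strips with $\sim\sigma$ cubes per strip, a structure the tangential wave packets do \emph{not} automatically have. The paper handles this by decomposing the Fourier support $B(\xi_0,M^{-1})$ into $(KM)^{-1}$-caps $\tau$, isolating the set $B_\epsilon$ where a single cap dominates (handled by induction on $M$), and in the complementary case passing to the bilinear tangent quantity $\mathrm{Bil}(e^{it\Delta}f_{j,\mathrm{tang}})$ (Lemma \ref{L:wall}). The tangential contribution is then controlled by the \emph{bilinear} refined Strichartz, Theorem \ref{bilrefstrich} (via Propositions \ref{Bil-prop}, \ref{2maxvar}, \ref{bilrefstrichM}), not the linear one. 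The bilinear version trades the horizontal-strip hypothesis for transversality of the Fourier supports, whose geometric consequence (two transverse $R^{3/4}\times R^{3/4}\times R$ boxes intersect in $\lesssim \sigma_{1,\Box}\sigma_{2,\Box}$ cubes, Eq.\ \eqref{geombil}) replaces the counting argument in \eqref{geom1}. Your plan to ``pigeonhole over horizontal strips and extract a dominant density $\sigma$'' would not close as stated because that structure is not available for a general curved wall.

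Second, your account of how the $M^{-\epsilon^2}$ factor closes the induction is inaccurate. In the paper, the $M$-induction is used \emph{only} to absorb the $B_\epsilon$ (narrow) contribution: a single cap $\tau$ has frequency radius $(KM)^{-1}$, so the inductive hypothesis at frequency radius $(KM)^{-1}$ returns a factor $(KM)^{-\epsilon^2}$, and choosing $K=K(\epsilon)$ large enough that $K^{\epsilon^4-\epsilon^2}\ll 1$ beats the $K^{\epsilon^4 p}$ loss from the definition of $B_\epsilon$. In the cellular case the induction is at the same $M$ (smaller $R$), so $M^{-\epsilon^2}$ simply rides along, and the bilinear tangent estimate (Proposition \ref{Bil-prop}) is \emph{uniform} in $M$; there is no ``$M$ tracked through decoupling in the tangential case.'' A minor further point: the paper covers the wall by balls of radius $R^{1-\delta}$ with $\delta=\epsilon^2$ (not $\sim R/D$), which is what makes the transverse contribution close with the $D=R^{\epsilon^4}$ choice and the transversality counting lemma from \cite{lG}.
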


Let us quickly explain how Theorem \ref{thm-4} implies the theorems in the introduction.  We note that by the dominated
convergence theorem we have
\[
  \big\|\underset{0<t\leq R}{\text{sup}}|e^{it\Delta}f|\big\|_{L^p(B(0,R))} =
  \lim_{q\to \infty}\big\|e^{it\Delta}f\big\|_{L^p_x L^q_t (B(0,R)\times [0,R])},
\]
 for any $L^2$-function $f$ with compact Fourier support or any Schwartz function $f$.  Therefore, Theorem \ref{thm-4} implies that for any $R \ge 1$ and any $f$ with the support of $\wh f \subset B(0,1)$, and for any $p > 3$, we have

\begin{equation} \label{checkforp>3}
 \big\|e^{it\Delta}f\big\|_{L^p_x L^\infty_t (B(0,R)\times [0,R])} \leq
C_{p, \epsilon} R^{\epsilon} \|f\|_2
\end{equation}

\noindent So far we assume $p>3$.  But it is straightforward to prove a bound of the form

$$\| e^{it\Delta} f\|_{L^2_x L^\infty_t (B(0,R) \times [0,R])} \le R^{O(1)} \| f \|_2.$$

\noindent Combining these bounds using H\"older's inequality, we see that Equation (\ref{checkforp>3}) holds for $p=3$ as well.  This establishes Theorem \ref{thm-3}.

We write $A \lessapprox B$ if $A \le C_\eps R^\eps B$ for any $\eps > 0$.  Suppose now that $\wh g$ is supported in $A(R)$.  To prove Theorem \ref{Thm:SchMaxBound}, we want to show that

\begin{equation} \label{maxL3R'}
\left\| \sup_{0 < t \le 1} | e^{it \Delta} g| \right\|_{L^3(B(0,1))}  \lessapprox R^{1/3} \| g \|_{L^2}.
\end{equation}

After parabolic rescaling, we are led to a function $f$ with $\wh f$ supported in $A(1)$, and we need to show the bound

\begin{equation*} \label{maxL3R''}
\left\| \sup_{0 < t \le R^2} | e^{it \Delta} f| \right\|_{L^3(B(0,R))}  \lessapprox \| f \|_{L^2}.
\end{equation*}

\noindent But applying Theorem \ref{thm-3} with $R^2$ in place of $R$ gives:

\begin{equation*} \label{maxL3R'''}
\left\| \sup_{0 < t \le R^2} | e^{it \Delta} f| \right\|_{L^3(B(0,R^2))}  \lessapprox \| f \|_{L^2}.
\end{equation*}

\noindent This implies Equation (\ref{maxL3R'}).  Now, given $s > 1/3$ and $f \in H^s(\mathbb{R}^2)$, we decompose $f$ in a Littlewood-Paley decomposition: $f = \sum_{k \ge 0} f_k$ where $\wh f_0$ is supported in $B(0,1)$ and $\wh f_k$ is supported in $A(2^k)$ for $k \ge 1$.  We have $\| f_k \|_{L^2} \lesssim 2^{-ks} \| f \|_{H^s}$.  Applying (\ref{maxL3R'}) to each $f_k$ and using the triangle inequality, we get Theorem \ref{Thm:SchMaxBound}.

Theorem \ref{Thm:SchMaxBound} implies Theorem \ref{Thm:PointConv} by a standard smooth approximation argument, which we briefly recall.  If $f$ is Schwartz, 
then it is well-known that $e^{i t \Delta} f(x) \rightarrow f(x)$ uniformly in $x$.  Schwartz functions are dense in $H^s$, and so we can write $f = g + h$ 
where $g$ is Schwartz and $\| h \|_{H^s} < \eps^{100}$.  Since $g$ is Schwartz, we can find a time $t_\eps > 0$ so that $| e^{i t \Delta} g(x) - g(x) | < \eps$ 
for all $x$ and all $0 \le t \le t_\eps$.  On the other hand, by the maximal estimate in Theorem \ref{Thm:SchMaxBound}, $|e^{i t \Delta} h(x)| < \eps$ for all $0 \le t \le 1$ and all $x$ in $B(0,1) \setminus X_\eps$, where $|X_\eps| < \eps$.  Taking a sequence of $\eps \rightarrow 0$ exponentially fast, and doing a little measure theory, it follows that $e^{i t \Delta} f(x) \rightarrow f(x)$ for almost every $x \in B(0,1)$.  The same applies to any other ball, and we see that $e^{i t \Delta} f(x) \rightarrow f(x)$ for almost every $x \in \ZR^2$.

We also remark that the local bound \eqref{maxLp} from Theorem \ref{Thm:SchMaxBound} can be used to derive immediately a global estimate in $L^{3}(\mathbb R^2)$ 
for the maximal function $\sup_{0<t\leq 1}|e^{it\Delta }f|$, following from Theorem 10 in \cite{Rogers}.
% provided  $f\in H^{\frac13+}$.
We are indebted to K. Rogers for pointing this out to us.

In the rest of the paper, we prove Theorem \ref{thm-4}.  In Section \ref{secpolypart}, we review polynomial partitioning, and in Section \ref{WPD}, we 
review wave packet decomposition.  Then we begin the proof of Theorem \ref{thm-4} in Section \ref{MP-C}.

\section{Polynomial Partitioning} \label{secpolypart}
\setcounter{equation}0

First we state a variation of the ham-sandwich theorem, which introduces a polynomial $P$ in the polynomial ring $\mathbb {R}[x,t]$ such  that
the variety $Z(P)=\{(x,t)\in \mathbb{R}^n \times \mathbb{R}: P(x,t)=0\}$
 bisects  every member in a collection of some quantities. It relies on Borsuk-Ulam Theorem,  which asserts that
 {\it if $F:\mathbb{S}^N \xrightarrow{} \mathbb{R}^N$ is a continuous function,  where $\mathbb{S}^N$ is the $N$-dimensional unit sphere, then there exists a point $v\in \mathbb{S}^N$ with $F(v)=F(-v)$.}

\begin{lemma} \label{Thm:Sandwich}
  If $W_1, W_2, \cdots, W_N \in L^1_x L^r_t(\mathbb{R}^n\times \mathbb{R}), 1\leq r<\infty$, then there exists a non-zero polynomial $P$ on $\mathbb{R}^n \times \mathbb{R}$ of degree $\leq c_nN^{1/(n+1)}$ such that for each $W_j$,
  \[
    \big\|\hichi_{\{P>0\}}W_j\big\|_{L^1_x L^r_t(\mathbb{R}^n\times \mathbb{R})}=\big\|\hichi_{\{P<0\}}W_j\big\|_{L^1_x L^r_t(\mathbb{R}^n\times \mathbb{R})}.
  \]
\end{lemma}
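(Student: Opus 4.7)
The plan is to follow the standard polynomial ham-sandwich argument via the Borsuk--Ulam theorem. First, I would fix a degree $D$ with $\dim \mathcal{P}_D \geq N+1$, where $\mathcal{P}_D$ denotes the space of real polynomials in the $n+1$ variables $(x,t)$ of degree at most $D$. Since $\dim \mathcal{P}_D = \binom{D+n+1}{n+1} \sim_n D^{n+1}$, we can choose such a $D$ with $D \leq c_n N^{1/(n+1)}$. Picking an $(N+1)$-dimensional subspace $V \subset \mathcal{P}_D$ and using the coefficient representation, I identify the unit sphere of $V$ with $\mathbb{S}^N \subset \mathbb{R}^{N+1}$.

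Next I would define the map $F : \mathbb{S}^N \to \mathbb{R}^N$ by
\[
 F_j(P) = \bigl\|\hichi_{\{P > 0\}} W_j\bigr\|_{L^1_x L^r_t(\mathbb{R}^n\times \mathbb{R})} - \bigl\|\hichi_{\{P < 0\}} W_j\bigr\|_{L^1_x L^r_t(\mathbb{R}^n\times \mathbb{R})}, \qquad 1\leq j\leq N.
\]
Replacing $P$ by $-P$ swaps the open sets $\{P>0\}$ and $\{P<0\}$, so $F$ is odd: $F(-P) = -F(P)$. Granted continuity of $F$, the Borsuk--Ulam theorem produces $P_0 \in \mathbb{S}^N$ with $F(P_0) = F(-P_0) = -F(P_0)$, hence $F(P_0) = 0$. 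This $P_0$ lies on the unit sphere of $V$, so it is a nonzero polynomial of degree at most $D \leq c_n N^{1/(n+1)}$ which bisects each $W_j$ in the required mixed norm.

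The main obstacle is verifying that $F$ is continuous. Suppose $P_k \to P$ in the finite-dimensional space $V$. Since $P$ is a nonzero polynomial, $Z(P) \subset \mathbb{R}^{n+1}$ has Lebesgue measure zero; by Fubini, for a.e.\ $x$ the slice $\{t : P(x,t) = 0\}$ also has measure zero. At any $(x,t) \notin Z(P)$, coefficient-level convergence forces $P_k(x,t) \to P(x,t)$ with the correct sign for all large $k$, hence $\hichi_{\{P_k > 0\}}(x,t) \to \hichi_{\{P > 0\}}(x,t)$ pointwise a.e., and similarly for $\{\cdot < 0\}$. I would then apply dominated convergence twice: for a.e.\ fixed $x$, the inner norm $\|\hichi_{\{P_k > 0\}} W_j(x,\cdot)\|_{L^r_t}$ converges to $\|\hichi_{\{P > 0\}} W_j(x,\cdot)\|_{L^r_t}$ with dominant $|W_j(x,\cdot)|^r \in L^1_t$; then the outer $L^1_x$ integral converges with dominant $\|W_j(x,\cdot)\|_{L^r_t} \in L^1_x$.

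The hypothesis $r < \infty$ is genuinely used in the inner dominated-convergence step: for $r=\infty$, a sign change on the null set $Z(P)$ could still shift an essential supremum in $t$, so the map $P \mapsto \hichi_{\{P > 0\}} W_j$ need not be continuous into $L^\infty_t$. Apart from this technical point, the argument is a direct Borsuk--Ulam application, with the dimension count of $\mathcal{P}_D$ producing the degree bound $c_n N^{1/(n+1)}$.
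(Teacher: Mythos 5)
Your proposal is correct and follows essentially the same route as the paper: identify an $(N{+}1)$-dimensional space of polynomials of degree $\lesssim_n N^{1/(n+1)}$, define the odd map $F$ on its unit sphere recording the signed bisection defects, and invoke Borsuk--Ulam. The only (minor) difference is in how continuity of $F$ is checked: you argue pointwise a.e.\ convergence of the indicator functions $\hichi_{\{P_k>0\}}$ away from $Z(P)$ and then apply dominated convergence twice, once in $t$ and once in $x$; the paper instead bounds $|G_j(P_k)-G_j(P)|\le 2\big\|\hichi_{\{P_kP\le 0\}}W_j\big\|_{L^1_xL^r_t}$, observes that $\limsup_k\{P_kP\le 0\}\subseteq P^{-1}(0)$, and applies dominated convergence once to the full mixed norm. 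Both arguments are valid, and your observation that the hypothesis $r<\infty$ is genuinely used in the $t$-step (essential suprema need not converge under a.e.\ convergence) is correct and worth making explicit.
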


\begin{proof}
  Let $V$ be the vector space of polynomials on $\mathbb{R}^n \times \mathbb{R}$ of degree at most $D$, then
\[
  \text{Dim} V = \binom{D+n+1}{n+1} \sim_n D^{n+1}.
\]
So we can choose $D \sim N^{1/(n+1)}$ such that $\text{Dim} V\geq N+1$, and without loss of generality we can assume $\text{Dim} V = N+1$ and identify $V$ with $\mathbb{R}^{N+1}$. We define a function $G$ as follows:
\begin{align}
  \mathbb{S}^N \subseteq V \backslash \{0\} & \xlongrightarrow{G} \mathbb{R}^N \notag\\
  P   & \mapsto \{G_j(P)\}_{j=1}^{N} \,,\notag
\end{align}
where
\[
  G_j(P):= \big\|\hichi_{\{P>0\}}W_j\big\|_{L^1_x L^r_t(\mathbb{R}^n\times \mathbb{R})}-\big\|\hichi_{\{P<0\}}W_j\big\|_{L^1_x L^r_t(\mathbb{R}^n\times \mathbb{R})} ,
\]
it is obvious that $G(-P)=-G(P).$
Assume that the function $G$ is continuous, then Borsuk-Ulam Theorem tells us that there exists $P\in \mathbb{S}^N \subseteq V
\backslash \{0\}$ with $G(P)=G(-P)$, hence $G(P)=0$, and $P$ obeys the conclusion of Lemma \ref{Thm:Sandwich}.
It remains to check the continuity of the functions $G_j$ on $V \backslash \{0\}$.

 Suppose that $P_k \to P$ in $V \backslash \{0\}$. Note that
\[
  |G_j(P_k)-G_j(P)| \leq 2 \big\|\hichi_{\{P_kP\leq 0\}}W_j\big\|_{L^1_x L^r_t(\mathbb{R}^n\times \mathbb{R})}\,,
\]
while $P_k \to P$ implies that
\[
  \bigcap_{k_0}\bigcup_{k\geq k_0}\{(x,t): P_k(x,t)\cdot P(x,t)\leq0\} \subseteq P^{-1}(0).
\]
By the dominated convergence theorem,
\[
  \lim_{k_0\to\infty} \big\|\hichi_{\cup_{k\geq k_0}\{P_k P\leq0\}}W_j\big\|_{L^1_x L^r_t(\mathbb{R}^n\times \mathbb{R})}
  = \big\|\hichi_{\{P^{-1}(0)\}}W_j\big\|_{L^1_x L^r_t(\mathbb{R}^n\times \mathbb{R})}=0.
\]
This proves that $\lim_{k\to\infty}|G_j(P_k)-G_j(P)| =0$, showing that $G_j$ is continuous on $V\backslash\{0\}.$
\end{proof}

By applying Lemma \ref{Thm:Sandwich} repeatedly, we get the following polynomial partitioning result:

\begin{theorem} \label{Thm:PolynPart}
  If $W \in L^1_x L^r_t(\mathbb{R}^n\times \mathbb{R})\backslash\{0\},1\leq r<\infty$, then for each $D$ there exists a non-zero polynomial $P$ of degree at most D such that $(\mathbb{R}^n\times\mathbb{R})\backslash Z(P)$ is a union of $\sim_n D^{n+1}$ disjoint open sets $O_i$ and for each $i$ we have
  \[
  \big\|W\big\|_{L^1_x L^r_t(\mathbb{R}^n\times \mathbb{R})} \leq c_n D^{n+1}\big\|\hichi_{O_i}W\big\|_{L^1_x L^r_t(\mathbb{R}^n\times \mathbb{R})}.
  \]
\end{theorem}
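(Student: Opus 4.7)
The plan is to iterate Lemma \ref{Thm:Sandwich} dyadically, at each stage doubling the number of sign-pattern regions while keeping the total degree of the product polynomial under control by a geometric series. The $O_i$ in the conclusion will be the sign-pattern sets of the final product polynomial (not individual connected components), and the inequality $\|W\| \leq c_n D^{n+1} \|\hichi_{O_i} W\|$ will follow immediately because all sign-pattern sets end up carrying exactly the same $L^1_x L^r_t$ mass $\|W\|/N$ with $N \lesssim D^{n+1}$.

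Here is the construction. Set $Q_0 \equiv 1$ and declare $\mathbb{R}^{n+1}$ itself to be the single ``sign-pattern set'' at stage $0$, with mass $\|W\|$. At stage $k$, suppose inductively that $Q_{k-1} = P_1 P_2 \cdots P_{k-1}$ has been built so that the $2^{k-1}$ sign-pattern sets $S_\epsilon := \bigcap_{j<k}\{\epsilon_j P_j > 0\}$, indexed by $\epsilon \in \{+,-\}^{k-1}$, each satisfy $\|\hichi_{S_\epsilon} W\|_{L^1_x L^r_t} = \|W\|/2^{k-1}$. Apply Lemma \ref{Thm:Sandwich} to the $2^{k-1}$ functions $\{\hichi_{S_\epsilon} W\}_\epsilon$: there exists a polynomial $P_k$ of degree at most $c_n (2^{k-1})^{1/(n+1)}$ that simultaneously bisects each of them. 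Then at stage $k$ the $2^k$ sign-pattern sets of $Q_k := Q_{k-1} P_k$ each have $W$-mass exactly $\|W\|/2^k$, completing the induction.

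Now I choose $K$ so that $K$ is the largest integer with
\[
\sum_{k=1}^{K} c_n \, 2^{(k-1)/(n+1)} \leq D.
\]
Since the left-hand side is a geometric series with common ratio $2^{1/(n+1)} > 1$, it is comparable to its last term $c_n 2^{(K-1)/(n+1)}$; in particular $2^K \geq c_n' D^{n+1}$ while the sum remains $\leq D$. Setting $P := Q_K = P_1 P_2 \cdots P_K$, we get $\deg P \leq D$, and the sign-pattern sets form $2^K$ disjoint open subsets of $\mathbb{R}^{n+1} \setminus Z(P)$ whose union is all of $\mathbb{R}^{n+1} \setminus Z(P)$. Each is nonempty, since its $W$-mass equals $\|W\|/2^K > 0$ and $W$ is locally integrable. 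Labeling these sets $O_1, \dots, O_{2^K}$ gives
\[
\|W\|_{L^1_x L^r_t} = 2^K \|\hichi_{O_i} W\|_{L^1_x L^r_t} \leq c_n D^{n+1} \|\hichi_{O_i} W\|_{L^1_x L^r_t},
\]
as desired.

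The only real subtlety is the geometric-series bookkeeping: one must verify that the exponent $1/(n+1)$ in Lemma \ref{Thm:Sandwich} is exactly what makes the degree budget match the cell count $D^{n+1}$. This is where the dimension $n+1$ enters decisively; any weaker bound on the degree in the bisection lemma would either blow up the final degree (since the geometric series would grow too fast) or force too few cells. Modulo that accounting, the argument is a straightforward dyadic iteration and the rest of the claims (disjointness, openness, and the sign-pattern sets exhausting the complement of $Z(P)$) are automatic from the construction.
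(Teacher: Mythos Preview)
Your argument is essentially the paper's own proof---iterated bisection via Lemma~\ref{Thm:Sandwich}, taking the product of the bisecting polynomials, and controlling the total degree by a geometric series---but there is one slip worth flagging. You assert that at stage $k$ every sign-pattern set carries mass \emph{exactly} $\|W\|/2^k$. This would require the $L^1_xL^r_t$ norm to be additive on functions with disjoint support, which fails for $r>1$: for fixed $x$ and disjoint $U_+,U_-$ one has
\[
\big\|\hichi_{U_+}W(x,\cdot)+\hichi_{U_-}W(x,\cdot)\big\|_{L^r_t}
=\big(\|\hichi_{U_+}W(x,\cdot)\|_{L^r_t}^r+\|\hichi_{U_-}W(x,\cdot)\|_{L^r_t}^r\big)^{1/r},
\]
which is in general strictly smaller than the sum. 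What Lemma~\ref{Thm:Sandwich} actually gives is that the two \emph{children} of each parent cell have equal mass; the triangle inequality then yields $\|\hichi_{S_\epsilon}W\|\le 2\|\hichi_{S_{\epsilon'}}W\|$ for each child $S_{\epsilon'}$ of $S_\epsilon$. Iterating, one obtains only the inequality $\|W\|\le 2^K\|\hichi_{O_i}W\|$, which is precisely what the paper records and what the theorem asserts. Your nonemptiness argument survives, since each $O_i$ then has mass $\ge \|W\|/2^K>0$. Note also that in your last display you need $2^K\le c_n D^{n+1}$, not $2^K\ge c_n'D^{n+1}$; both bounds do follow from the maximality of your $K$, so this is just a matter of stating the correct direction. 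With these corrections the proof is complete and coincides with the paper's.
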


\begin{proof}
  By Lemma \ref{Thm:Sandwich}, we obtain a polynomial $P_1$ of degree $\lesssim 1$ such that
  \[
    \big\|\hichi_{\{P_1>0\}}W\big\|_{L^1_x L^r_t(\mathbb{R}^n\times \mathbb{R})}=\big\|\hichi_{\{P_1<0\}}W\big\|_{L^1_x L^r_t(\mathbb{R}^n\times \mathbb{R})}.
  \]
Next, we let $W_+:=\hichi_{\{P_1>0\}}W$ and $W_-:=\hichi_{\{P_1<0\}}W$, and by Lemma \ref{Thm:Sandwich} again we obtain a polynomial $P_2$ of degree $\lesssim 2^{1/(n+1)}$ such that
\[
  \big\|\hichi_{\{P_2>0\}}W_j\big\|_{L^1_x L^r_t(\mathbb{R}^n\times \mathbb{R})}=\big\|\hichi_{\{P_2<0\}}W_j\big\|_{L^1_x L^r_t(\mathbb{R}^n\times \mathbb{R})},
\]
for $j=+,-$. Continuing inductively, we construct polynomials $P_1, P_2, \cdots, P_s$. Let $P:=\prod_{k=1}^s P_k$. The sign conditions of the polynomials cut $(\mathbb{R}^n\times\mathbb{R})\backslash Z(P)$ into $2^s$ cells $O_i$, and by construction and triangle inequality we have that, for each $i$,
\[
  \big\|W\big\|_{L^1_x L^r_t(\mathbb{R}^n\times \mathbb{R})} \leq 2^s \big\|\hichi_{O_i}W\big\|_{L^1_x L^r_t(\mathbb{R}^n\times \mathbb{R})}.
\]
 By construction, $\text{deg}\, P_k \lesssim 2^{(k-1)/(n+1)}$, therefore $\text{deg}\, P \leq c_n2^{s/(n+1)}$. We can choose $s$ such that $c_n2^{s/(n+1)} \in [D/2,D]$, then $\text{deg}\, P \leq D$ and the number of cells $2^s \sim_{n} D^{n+1}$.
\end{proof}

\begin{definition}
  We say that a polynomial $P$ is non-singular if $\nabla P(z) \neq 0$ for each point $z$ in $Z(P)$.
\end{definition}

It is well-known that non-singular polynomials are dense in the space of all polynomials, cf. Lemma 1.5 in \cite{lG}.
Following from the density of non-singular polynomials and the proof of Theorem \ref{Thm:PolynPart}, we can assume that the polynomial in the partitioning theorem enjoys nice geometric properties.

\begin{theorem} \label{Thm:PolynPart2}
  If $W \in L^1_x L^r_t(\mathbb{R}^n\times \mathbb{R})\backslash\{0\}, 1\leq r <\infty$, then for each $D$ there exists a non-zero polynomial $P$ of degree at most D such that $(\mathbb{R}^n\times\mathbb{R})\backslash Z(P)$ is a union of $\sim_n D^{n+1}$ disjoint open sets $O_i$ and for each $i$ we have
  \[
  \big\|W\big\|_{L^1_x L^r_t(\mathbb{R}^n\times \mathbb{R})}\leq c_nD^{n+1}\big\|\hichi_{O_i}W\big\|_{L^1_x L^r_t(\mathbb{R}^n\times \mathbb{R})}.
  \]
  Moreover, the polynomial $P$ is a product of distinct non-singular polynomials.
\end{theorem}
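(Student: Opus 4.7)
The plan is to revisit the inductive proof of Theorem \ref{Thm:PolynPart} and upgrade each application of Lemma \ref{Thm:Sandwich} so that the factor $P_k$ chosen at step $k$ is both non-singular and not proportional to any of the previously chosen $P_1, \ldots, P_{k-1}$. The extra conditions will be enforced by a small perturbation, using the density of non-singular polynomials.

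First I would prove the following refined sandwich lemma: given $W_1, \ldots, W_N$, finitely many already-chosen polynomials $Q_1, \ldots, Q_m$, and any $\eta > 0$, there exists a non-singular polynomial $P$ of degree $\le c_n N^{1/(n+1)}$ that is not proportional to any $Q_i$ and satisfies $|G_j(P)| < \eta$ for each $j$, where $G_j$ is the function from the proof of Lemma \ref{Thm:Sandwich}. To see this, apply Lemma \ref{Thm:Sandwich} to obtain $P_0 \in V$ with $G_j(P_0) = 0$ for every $j$. Non-singular polynomials form an open dense subset of $V$ (Lemma 1.5 in \cite{lG}), and the set of polynomials not proportional to any $Q_i$ is also open dense (its complement is a finite union of one-dimensional subspaces of $V$). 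Their intersection is open dense in $V \setminus\{0\}$, and by the continuity of $G_j$ on $V \setminus \{0\}$ established in the proof of Lemma \ref{Thm:Sandwich}, any $P$ in this intersection sufficiently close to $P_0$ will satisfy $|G_j(P)| < \eta$ for every $j$.

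Next I would run the same $s$-step bisection used in the proof of Theorem \ref{Thm:PolynPart}, but at step $k$ apply the refined sandwich lemma with a small parameter $\eta_k$ and with $P_1, \ldots, P_{k-1}$ as forbidden directions. A telescoping argument tracks how approximate bisection accumulates: if at step $k$ every cell has $L^1_x L^r_t$-norm of $W$ within a relative error $\varepsilon_k$ of $2^{-k}\|W\|_{L^1_x L^r_t}$, the refined step gives $\varepsilon_{k+1} \le \varepsilon_k/2 + \eta_{k+1}/(2\|W\|_{L^1_x L^r_t})$. Choosing $\eta_k = \|W\|_{L^1_x L^r_t} \cdot \eta_0 \cdot 2^{-k}$ for a sufficiently small $\eta_0 = \eta_0(n, s)$ makes $\varepsilon_s \le \tfrac{1}{2}$, so that $\|\hichi_{O_i}W\|_{L^1_x L^r_t} \ge 2^{-(s+1)}\|W\|_{L^1_x L^r_t}$ in every cell. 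This gives the bound of Theorem \ref{Thm:PolynPart} with the factor of $2$ absorbed into $c_n$, and the resulting $P = \prod_{k=1}^s P_k$ has degree at most $D$ and is, by construction, a product of distinct non-singular polynomials.

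The main obstacle in executing this plan is confirming that approximate bisection suffices in place of the exact bisection produced by Borsuk-Ulam, since the errors compound across the $s \sim (n+1)\log_2 D$ levels of the bisection tree. Once the geometric tolerance $\eta_k = \eta_0 \cdot 2^{-k}\|W\|_{L^1_x L^r_t}$ is in place, the accumulation is controlled in a routine way, and distinctness of the factors is automatic because at every step the new $P_k$ is chosen outside the finite union of lines spanned by $P_1, \ldots, P_{k-1}$. The cell structure of $(\mathbb{R}^n \times \mathbb{R})\setminus Z(P)$ is then identical in form to the one in Theorem \ref{Thm:PolynPart}, completing the argument.
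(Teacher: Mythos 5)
Your approach — perturb the exactly bisecting polynomial from Lemma \ref{Thm:Sandwich} to a nearby non-singular polynomial not proportional to earlier factors, run the same $s$-step bisection with small tolerances, and track the accumulated error — is exactly what the paper intends when it says the theorem follows ``from the density of non-singular polynomials and the proof of Theorem \ref{Thm:PolynPart}.'' One small correction: the displayed recursion for the relative error is misstated (the relative error does not halve, and the absolute tolerance $\eta_{k+1}$ must be compared to the cell mass $\approx 2^{-k}\|W\|$, not to $\|W\|$); the clean bookkeeping is in absolute terms, $m_{k+1} \ge (m_k - \eta_{k+1})/2$ where $m_k$ is the minimal cell mass at level $k$, which gives $m_s \ge 2^{-s}\|W\| - \sum_{k=1}^s 2^{k-s-1}\eta_k$; with your choice $\eta_k = \eta_0\, 2^{-k}\|W\|$ this equals $2^{-s}\|W\|\bigl(1 - s\eta_0/2\bigr)$, so $\eta_0 \lesssim 1/s$ suffices, which is consistent with your stated dependence of $\eta_0$ on $s$, and the rest of the argument closes as you describe.
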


\section{Wave Packet Decomposition}\label{WPD}
\setcounter{equation}0

We focus on the dimension $n=2$ in the rest of the paper.

A (dyadic) rectangle in $\mathbb R^{2}$ is a product of (dyadic) intervals with respect to given coordinate axes of
$\mathbb R^{2}$. A rectangle $\theta=\prod_{j=1}^{2}{\theta_j}$ in frequency space and a rectangle $\nu=\prod_{j=1}^{2}\nu_j$ in physical space are said to
be dual if $|\theta_j||\nu_j|=1$ for $j=1, 2$. We say that $(\theta ,\nu)$ is a tile if it is a pair of \emph{dual} (dyadic) rectangles.  The dyadic condition is not essential in our decomposition.

Let $\varphi$ be a Schwartz function from $\mathbb R $ to $\mathbb R$ whose Fourier transform is non-negative, supported
in a small interval, of radius $\kappa$ ($\kappa$ is a fixed small constant), about the origin in $\mathbb R$, and identically
$1$ on another smaller interval around the origin. For a (dyadic) rectangular box $\theta=\prod_{j=1}^2 \theta_j$,
set
\begin{equation}
 \widehat{\varphi_\theta}(\xi_1, \xi_2) = \prod_{j=1}^{2} \frac{1}{|\theta_j|^{1/2}}\widehat\varphi\left( \frac{\xi_j-c(\theta_j)}{|\theta_j|}\right)\,.
\end{equation}

Here $c(\theta_j)$ is the center of the interval $\theta_j$ and hence $c(\theta)=\left(c(\theta_1),  c(\theta_{2})\right)$
 is the center of the rectangle $\theta$.  We also note that $\| \varphi_\theta \|_{L^2} \sim 1$.
We let $c(\nu)$ denote the center of $\nu$.  For a tile $(\theta, \nu)$ and $x\in \mathbb R^{2}$, we define
\begin{equation}
\widehat{\varphi_{\theta, \nu}}(\xi)= e^{2\pi ic(\nu)\cdot \xi}\widehat{\varphi_\theta}(\xi).
\end{equation}
We say that two tiles $(\theta, \nu)$ and $(\theta', \nu')$ have the same dimensions if $|\theta_j| = |\theta'_j|$ for all $j$, which then implies that $|\nu_j| = |\nu_j'|$ for all $j$.
Let $\bT$ be a collection of all tiles with fixed dimensions and coordinate axes.
Then for any Schwartz function $f$ from $\mathbb R^{2}$ to $ \mathbb R$, we have the following
representation
\begin{equation}\label{rep0}
 f(x) = c_\kappa \sum_{(\theta, \nu)\in \bT} f_{\theta,\nu}
 := c_\kappa \sum_{(\theta, \nu)\in \bT}\langle f, \varphi_{\theta,\nu}\rangle \varphi_{\theta,\nu}(x)\,,
\end{equation}
where $c_\kappa$ is an absolute constant.
This representation can be proved directly (see \cite{LL}) or
by employing inductively the one-dimensional result in \cite{LT1}. \\

We will only use tiles $(\theta, \nu)$ where $\theta$ is an $R^{-\frac12}$-cube in frequency space and $\nu$ is an $R^{\frac12}$-cube in physical space.  Indeed, let $\theta$ be an $R^{-\frac12}$-cube (or ball)
in $B(0,1)\subset \ZR^2$. Let $\bT_\theta$ be a collection of all tiles $(\theta',\nu)$
such that $\nu$'s are $R^{\frac12}$-cubes and $\theta'=\theta$. Then for any Schwartz function $f$ with ${\rm supp}\wh{f}\subset B(0,1)$, we have
\begin{equation}\label{rep1}
 f(x) = c_\kappa\sum_{\theta}\sum_{(\theta',\nu)\in\bT_\theta}\langle f, \varphi_{\theta',\nu}\rangle \varphi_{\theta',\nu}(x)\,.
\end{equation}
Here $\theta$'s range over all possible cubes in ${\rm supp}\widehat{f}$. We use
$\bT$ to denote $\cup_\theta \bT_\theta$. It is clear that
\begin{equation}
 \sum_{(\theta,\nu)\in \bT}\big|\langle f,\varphi_{\theta,\nu}\rangle\big|^2\sim \|f\|_2^2\,.
\end{equation}
We set
\begin{equation}
 \psi_{\theta,\nu}(x, t) = e^{it\Delta}\varphi_{\theta,\nu}(x)\,.
\end{equation}
From (\ref{rep1}), we end up with the following representation for $e^{it\Delta}f$ :
\begin{equation}\label{rep2}
 e^{it\Delta}f(x) =c_\kappa \sum_{(\theta,\nu)\in \bT} e^{it\Delta}f_{\theta,\nu}(x)= c_\kappa \sum_{(\theta,\nu)\in \bT} \langle f, \varphi_{\theta,\nu}\rangle\psi_{\theta,\nu}(x, t)\,.
\end{equation}
We shall analyze the localization of $\psi_{\theta,\nu}$ in the physical and frequency space.  

On the domain $B(0,R) \times [0,R]$, the function $\psi_{\theta, \nu}$ is essentially supported on a tube $T_{\theta, \nu}$ defined as follows. Let
\begin{equation}\label{defofTs}
  T_{\theta,\nu} := \{(x,t)\in\mathbb{R}^2\times\mathbb{R}\,:\,0\leq t\leq R, |x-c(\nu)+2tc(\theta)|\leq R^{1/2+\delta}\}\, ,
\end{equation}
where $\delta=\epsilon^2$ is a small positive parameter. We see that $T_{\theta,\nu}$ is a tube of length $R$, of radius $R^{1/2+\delta}$, in the
direction $G_0(\theta)=(-2c(\theta),1)$, and intersecting $\{t=0\}$ at an $R^{1/2+\delta}$-ball centered at $c(\nu)$.
In order to see this, let $\psi$ be a Schwartz function with Fourier transform supported in $[-1, 1]$ and $2\psi(t)\geq \hichi_{[0,1]}(t)$.
Here $\hichi_{[0,1]}$ is the characteristic function on $[0,1]$.  On $B(0,R) \times [0, R]$, we have $| \psi_{\theta, \nu}| \le 2 | \psi^*_{\theta, \nu}|$, where
\begin{equation}
 \psi_{\theta,\nu}^*(x,t) =\psi_{\theta,\nu}(x, t) \psi\big(\frac{t}{R}\big)\,.
\end{equation}
From the definitions of $e^{it\Delta}$ and $\psi_{\theta,\nu}$, it is easy to check that, by integration by parts,
$\psi^*_{\theta,\nu}$ is essentially supported in the tube $T_{\theta,\nu}$.
More precisely, we have
\begin{equation}\label{eq:psis}
 |\psi^*_{\theta,\nu}(x,t)|\leq \frac{1}{\sqrt{R}} \hichi^*_{T_{\theta,\nu}}(x,t) \, ,
\end{equation}
where
$\hichi^*_{T_{\theta,\nu}}$ denotes a bump function satisfying that
$\hichi^*_{T_{\theta,\nu}}=1$ on $\{(x,t)\in\mathbb{R}^2\times\mathbb{R}\,:\,0\leq t\leq R, |x-c(\nu)+2tc(\theta)|\leq \sqrt{R}\}$,
and $\hichi^*_{T_{\theta,\nu}}=O(R^{-1000})$ outside $T_{\theta,\nu}$. We can essentially treat $\hichi^*_{T_{\theta,\nu}}$ as $\hichi_{T_{\theta,\nu}}$, the indicator function on the tube $T_{\theta,\nu}$.

On the other hand, the Fourier transform of $\psi_{\theta,\nu}^*$ enjoys
\begin{equation}
  \widehat{\psi^*_{\theta,\nu}}(\xi_1,\xi_2,\xi_3)
 =R\widehat{\varphi_{\theta,\nu}}(\xi_1,\xi_2)\widehat{\psi}\left(\frac{\xi_3-(\xi_1^2+\xi_2^2)}{1/R}\right)\,.
\end{equation}
Hence $\widehat{\psi^*_{\theta,\nu}}$ is supported in the $\frac{1}{R}$-neighborhood of the parabolic cap over $\theta$, that is,
\begin{equation} \label{eq:psishat}
  \text{supp}\,\widehat{\psi^*_{\theta,\nu}} \subseteq \big\{(\xi_1,\xi_2,\xi_3)\,:\,
(\xi_1,\xi_2)\in \theta, |\xi_3-(\xi_1^2+\xi_2^2)|\leq \frac{1}{R} \big\}.
\end{equation}
We denote this $\frac{1}{R}$-neighborhood of the parabolic cap over $\theta$ by $\theta^*$.
In the rest of the paper, we can assume that the function $\psi_{\theta,\nu}$ is essentially localized in
$T_{\theta,\nu}$ in physical space, and $\theta^*$ in frequency space.

\section{Cell contributions}
\label{MP-C}
\setcounter{equation}0

The rest of the paper is devoted to a proof of Theorem \ref{thm-4}, using polynomial partitioning. Recall that the functions $f$ in Theorem \ref{thm-4} are Fourier supported in $B(\xi_0,M^{-1})\subset\mathbb{R}^2$ with arbitrary $\xi_0\in B(0,1)$ and $ M\geq 1$.  Also $p > 3$ and $q > \eps^{-4}$.  The function $f$ can be assumed to be a Schwartz function since the collection of all Schwartz
functions is dense in $L^2$.  We need to prove the bound (\ref{max5}):

\begin{equation*}
 \big\|e^{it\Delta}f\big\|_{L^p_x L^q_t (B(0,R)\times [0,R])} \leq
C_{p, \epsilon} M^{-\epsilon^2}R^{\epsilon} \|f\|_2\,.
\end{equation*}

The proof of Theorem \ref{thm-4} is by induction on the radius $R$ in physical space and the radius $1/M$ in frequency space.  First we cover the bases of the induction.  If $M\geq R^{10}$, then we bound $|e^{it\Delta}f(x)|$ by $M^{-1}\|f\|_2$ and Theorem \ref{thm-4} is trivial. If $R^{1/2-O(\delta)}<M<R^{10}$, then all associated wave packets are in the same direction, and by a direct computation we can bound the left-hand side of \eqref{max5} by $R^{(3-p)/(2p)+O(\delta)}\|f\|_2$, from which Theorem \ref{thm-4} follows immediately. Therefore we can assume that $M\ll \sqrt R$.  We can assume that $R$ is sufficiently large, otherwise Theorem \ref{thm-4} is trivial.  This covers the base of the induction.  Now we turn to the inductive step.  By induction, we can assume that Theorem \ref{thm-4} holds for physical radii less than $R/2$ or for physical radius $R$ and frequency radius less than $\frac{1}{2M}$.

Let $\bB^*_R$ denote the set $B(0, R)\times [0, R]$.

We pick a degree $D=R^{\epsilon^4}$, and apply polynomial partitioning with this degree to the
function $\hichi_{\bB^*_R}|e^{it\Delta}f(x)|^p$. By Theorem \ref{Thm:PolynPart2} with $r=q/p$, there exists a non-zero polynomial
$P$ of degree at most $D$ such that $(\mathbb{R}^2\times\mathbb{R})\backslash Z(P)$ is a union of $\sim D^{3}$
disjoint open sets $O_i$ and for each $i$ we have
\begin{equation} \label{eq:PolynPart}
  \big\|e^{it\Delta} f (x)\big\|^p_{L^p_x L^q_t (\bB^*_R)} \leq cD^3 \big\|\hichi_{O_i}e^{it\Delta} f (x)\big\|^p_{L^p_x L^q_t (\bB^*_R)}.
\end{equation}
  Moreover, the polynomial $P$ is a product of distinct non-singular polynomials.

We define
\begin{equation}
  W:=N_{R^{1/2+\delta}}Z(P)\cap \bB^*_R\,,
\end{equation}
where $\delta=\epsilon^2$ and  $N_{R^{1/2+\delta}}Z(P)$ stands for the $R^{1/2+\delta}$-neighborhood of
the variety $Z(P)$ in $\ZR^3$.
We have the wave packet decomposition for $e^{it\Delta}f$ as in (\ref{rep2}). For each cell $O_i$, we set
\begin{equation}
 O'_i :=\left[O_i\cap\bB^*_R\right] \backslash W
\,\,\,{\rm and}\,\,\,
 \bT_i:=\{(\theta,\nu)\in\bT\,:\,T_{\theta,\nu} \cap O'_i \neq \emptyset\} \,.
\end{equation}
Here $T_{\theta,\nu}$ is the tube associated to each tile $(\theta,\nu)$, as defined in (\ref{defofTs}).
For each function $f$ we define
\begin{equation}
  f_i :=  \sum_{(\theta,\nu)\in\bT_i} f_{\theta,\nu} \,.
\end{equation}
From \eqref{eq:psis}, it follows that on each cell $O_i'$,
\begin{equation}\label{eqf-fi}
 e^{it\Delta}f(x)\sim e^{it\Delta}f_i(x) \,.
\end{equation}
By the fundamental theorem of algebra, we have a simple yet important geometric observation:
\begin{lemma}\label{lem-G0}
  For each tile $(\theta,\nu)\in\bT$, the number of cells $O'_i$ that intersect the tube $T_{\theta,\nu}$ is $\leq D+1$.
\end{lemma}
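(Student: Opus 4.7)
The plan is to analyze the central line of $T_{\theta,\nu}$ via the fundamental theorem of algebra, and then transfer information from the central line to the full tube using the definition of $W$ as an $R^{1/2+\delta}$-neighborhood of $Z(P)$. Define the central line
\[
L_{\theta,\nu}=\{\gamma(t):t\in[0,R]\},\qquad \gamma(t)=\big(c(\nu)-2tc(\theta),\,t\big),
\]
so that $T_{\theta,\nu}$ consists of points $(x,t)$ with $t\in[0,R]$ and $|x-c(\nu)+2tc(\theta)|\le R^{1/2+\delta}$, i.e.\ the set within distance $\le R^{1/2+\delta}$ of $L_{\theta,\nu}$ in the horizontal slab. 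I will split on whether $P$ vanishes identically on $L_{\theta,\nu}$.

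\textbf{Case 1: $P\not\equiv 0$ on $L_{\theta,\nu}$.} The composition $t\mapsto P(\gamma(t))$ is a nonzero polynomial of degree at most $D$, so by the fundamental theorem of algebra it has at most $D$ real roots. Consequently $L_{\theta,\nu}\setminus Z(P)$ has at most $D+1$ connected components, each lying in a single open cell $O_i$. The crucial step is to show that if $T_{\theta,\nu}$ meets some $O_j'$, then $L_{\theta,\nu}$ meets $O_j$. Pick $(x,t)\in T_{\theta,\nu}\cap O_j'$. Since $(x,t)\notin W$, the open Euclidean ball $B((x,t),R^{1/2+\delta})$ is disjoint from $Z(P)$. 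But $|(x,t)-\gamma(t)|=|x-c(\nu)+2tc(\theta)|\le R^{1/2+\delta}$, so the straight segment from $(x,t)$ to $\gamma(t)$ lies in that ball and hence avoids $Z(P)$. Therefore $(x,t)$ and $\gamma(t)$ lie in the same connected component of $\mathbb R^3\setminus Z(P)$, namely $O_j$, so $\gamma(t)\in O_j$. Thus the indices $j$ arising from cells $O_j'$ intersected by $T_{\theta,\nu}$ are a subset of the indices of the at most $D+1$ cells traversed by $L_{\theta,\nu}$.

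\textbf{Case 2: $P\equiv 0$ on $L_{\theta,\nu}$.} Then $L_{\theta,\nu}\subset Z(P)$, and every point of $T_{\theta,\nu}$ is within distance $R^{1/2+\delta}$ of $L_{\theta,\nu}$, hence within distance $R^{1/2+\delta}$ of $Z(P)$. Therefore $T_{\theta,\nu}\subset W$, so $T_{\theta,\nu}$ meets no $O_i'$ at all, and the count is $0\le D+1$.

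The only mildly subtle point is the transfer from the tube to the central line. It is essentially automatic once one observes that a point of $T_{\theta,\nu}$ at horizontal distance $\le R^{1/2+\delta}$ from $\gamma(t)$ is connected to $\gamma(t)$ by a straight segment of length $\le R^{1/2+\delta}$, which must avoid $Z(P)$ whenever the endpoint avoids $W$. No induction or analytic estimate is required; the lemma is purely geometric/algebraic.
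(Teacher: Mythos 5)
Your proof is correct and takes essentially the same approach as the paper's one-line argument: transfer from the tube to its central line using the fact that a point of $O_i'$ lies outside $W$, then apply the fundamental theorem of algebra to the restriction of $P$ to the line. The paper asserts the transfer step (``if $T_{\theta,\nu}$ intersects $O_i'$, then the central line must enter $O_i$'') without the ball-and-segment justification you supply, and it folds the degenerate case $P\equiv 0$ on the line into the general statement, but these are filled-in details rather than a different route.
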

\begin{proof}
 If $T_{\theta,\nu}$ intersects $O'_i$, then the central line of $T_{\theta,\nu}$ must enter $O_i$. On the other hand, a line can cross the variety $Z(P)$ at most $D$ times, hence can enter at most $D+1$ cells $O_i$.
\end{proof}

By triangle inequality, we dominate $\big\|e^{it\Delta} f (x)\big\|^p_{L^p_x L^q_t (\bB^*_R)}$ by
\begin{equation}\label{cell-W}
\sum_i  \big\|\hichi_{O_i'}e^{it\Delta} f (x)\big\|^p_{L^p_x L^q_t (\bB^*_R)}
+  \big\|\hichi_W e^{it\Delta} f (x)\big\|^p_{L^p_x L^q_t (\bB^*_R)}\,.
\end{equation}
We call the first term in (\ref{cell-W}) the cellular term, and the second the wall term.
Using induction we will see that the desired bound (\ref{max5}) holds unless the wall term makes a significant contribution.  In particular, we will show that (\ref{max5}) holds unless

\begin{equation}\label{Wdom}
\big\| e^{i t \Delta} f \big \|_{L^p_x L^q_t (\bB^*_R)} \lesssim R^{O(\eps^4)} \big\| \hichi_W e^{i t \Delta} f \big \|_{L^p_x L^q_t (\bB^*_R)}.
\end{equation}

Define
\begin{equation} \label{eq:I}
  \mathcal{I} =\bigg\{i\,:\,\big\|e^{it\Delta} f (x)\big\|^p_{L^p_x L^q_t (\bB^*_R)}\leq
10 c D^3 \big\|\hichi_{O'_i}e^{it\Delta} f (x)\big\|^p_{L^p_x L^q_t (\bB^*_R)}\bigg\},
\end{equation}
where $c$ is the constant from \eqref{eq:PolynPart}. By triangle inequality and \eqref{eq:PolynPart}, for each $i\in \mathcal{I}^c$, we have

$$
  \big\|e^{it\Delta} f (x)\big\|^p_{L^p_x L^q_t (\bB^*_R)} \leq \frac{10}{9}cD^3 \big\|\hichi_{O_i\cap W}e^{it\Delta} f (x)\big\|^p_{L^p_x L^q_t (\bB^*_R)} $$

$$\lesssim R^{3 \eps^4} \big\|\hichi_{W}e^{it\Delta} f (x)\big\|^p_{L^p_x L^q_t (\bB^*_R)}.$$

So if $\mathcal{I}^c$ is non-empty, then (\ref{Wdom}) holds.  For the moment, we are considering the case where (\ref{Wdom}) does not hold, and so every index $i$ is in $\mathcal{I}$, and hence $|\mathcal{I} | \sim D^3$.

In addition, by Lemma \ref{lem-G0},
\begin{equation}\label{I-est}
  \sum_{i} \|f_{i}\|_2^2
\lesssim (D+1) \sum_{\theta,\nu}\| f_{\theta,\nu}\|_2^2 \lesssim D\|f\|_2^2\,.
\end{equation}
Henceforth, by pigeonhole principle, there exists $i\in\mathcal{I}$ such that
\begin{equation}\label{eq:special i0}
   \|f_{i}\|_2^2
 \lesssim D^{-2}\|f\|_2^2\,.
\end{equation}

Now we use induction: we apply \eqref{max5} to this special $f_i$ at radius $\frac R 2$.  We can cover $B(0,R) \times [0,R]$ by $O(1)$ cylinders with dimensions $B(0,R/2) \times [0, R/2]$.  Therefore, we get the bound

\begin{align}
  &\big\|e^{it\Delta} f (x)\big\|^p_{L^p_x L^q_t (\bB^*_R)}\lesssim D^3 \big\|\hichi_{O'_i}e^{it\Delta} f (x)\big\|^p_{L^p_x L^q_t (\bB^*_R)} \lesssim  D^{3}\big\|e^{it\Delta} f_i (x)\big\|^p_{L^p_x L^q_t (\bB^*_R)}\notag \\
   \lesssim & D^3 \left[C_{p, \epsilon} M^{-\epsilon^2} R^{\epsilon}\|f_i\|_2\right]^p  \lesssim D^{3-p} \left[C_{p,\epsilon} M^{-\epsilon^2} R^{\epsilon}\|f\|_2\right]^p \,.\notag
\end{align}

Recall that $D=R^{\epsilon^4}$, and we can assume $R$ is very large (compared to $p$).  Since $p>3$ we have $D^{3-p} \ll 1$.  Therefore, we see that induction closes (unless \eqref{Wdom} holds).

It remains to prove the desired bounds when \eqref{Wdom} holds -- when the wall term is almost as big as the whole.
\\

\section{Contribution from the wall: transverse and tangent terms}
\label{wall}
\setcounter{equation}0

From Section \ref{MP-C}, it remains
to estimate the wall contribution, the second term in (\ref{cell-W}).
To deal with the contribution from the wall $W$, we break $B^*_R$ into $\sim R^{3\delta}$ balls $B_j$
 of radius $R^{1-\delta}$.  (Recall from the last section that $\delta$ is defined to be $\eps^2$.)

For any tile $(\theta,\nu) \in\bT$,
we say that $T_{\theta,\nu}$ is \emph{tangent} to the wall $W$ in a given ball $B_j$ if
it satisfies that $T_{\theta,\nu}\cap B_j\cap W \neq \emptyset$ and
\begin{equation}
 \text{Angle}(G_0(\theta),T_z[Z(P)])\leq R^{-1/2+2\delta}
\end{equation}
for any non-singular point $z\in 10T_{\theta,\nu}\cap 2B_j\cap Z(P)$.
Recall that $G_0(\theta)=(-2c(\theta),1)$ is the direction of the tube $T_{\theta,\nu}$. Here $T_z[Z(P)]$ stands for the tangent space to the variety $Z(P)$ at the point $z$, and by a non-singular point we mean a point $z$ in $Z(P)$ with $\nabla P(z) \neq 0.$ Since $P$ is a product of distinct non-singular polynomials, the non-singular points are dense in $Z(P)$.  We note that if $T_{\theta, \nu}$ is tangent to $W$ in $B_j$, then $T_{\theta, \nu} \cap B_j$ is contained in the $R^{1/2 + \delta}$-neighborhood of $Z(P) \cap 2 B_j$.  

We say that $T_{\theta,\nu}$ is \emph{transverse}  to the wall $W$ in the ball $B_j$ if it enjoys
that  $T_{\theta,\nu}\cap B_j\cap W \neq \emptyset$ and
\begin{equation}
 \text{Angle}(G_0(\theta),T_z[Z(P)])> R^{-1/2+2\delta}
\end{equation}
for some non-singular point $z\in 10T_{\theta,\nu}\cap 2B_j\cap Z(P)$.

Let $\bT_{j, {\rm tang}}$ represent the collection of
all tiles $(\theta,\nu)\in \bT$ such that $T_{\theta,\nu}$'s are tangent to the wall $W$ in $B_j$,
and $\bT_{j, {\rm trans}}$ denote the collection of
all tiles $(\theta,\nu)\in \bT$ such that $T_{\theta,\nu}$'s are transverse to the wall $W$ in $B_j$.

We define $f_{j,{\rm tang}}:=\sum_{(\theta,\nu)\in \bT_{j,{\rm tang}}}f_{\theta,\nu}$ and $f_{j,{\rm trans}}:=\sum_{(\theta,\nu)\in \bT_{j,{\rm trans}}}f_{\theta,\nu}$.
Then on $B_j \cap W$, we have
\begin{equation}
e^{it\Delta}f(x)\sim e^{it\Delta}f_{j,\rm tang}(x)+e^{it\Delta}f_{j,\rm trans}(x)\,.
\end{equation}

The following Lemma is about how a tube crosses a variety transversely, which was proved by the second author in \cite{lG}. It says that 
$T_{\theta,\nu}$ crosses the wall $W$ transversely in at most $R^{O(\epsilon^4)}$
many balls $B_j$.

\begin{lemma} \label{L:trans} (Lemma 3.5 in \cite{lG})
  For each tile $(\theta,\nu) \in \bT$, the number of $R^{1-\delta}$-balls $B_j$ for which
 $(\theta,\nu)\in \bT_{j,{\rm trans}}$ is at most $\text{Poly}(D)=R^{O(\epsilon^4)}$.
\end{lemma}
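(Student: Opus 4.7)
\textbf{Proof plan for Lemma \ref{L:trans}.} The plan is to show that every ball $B_j$ in which $T_{\theta,\nu}$ is transverse comes with a genuine crossing of the central line of $T_{\theta,\nu}$ with $Z(P)$ in a controlled neighborhood of $B_j$, and then use that a univariate polynomial of degree $\le D$ can have only $D$ roots. Throughout, fix a tile $(\theta,\nu)\in\bT$ and let $\ell$ denote the central line of $T_{\theta,\nu}$; write $\vec{v}=G_0(\theta)/|G_0(\theta)|$ for its unit direction. Also write $\alpha_0:=R^{-1/2+2\delta}$ for the transversality angle threshold.

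\textbf{Step 1: reduce to $\ell\not\subset Z(P)$.} Suppose $\ell\subset Z(P)$. Since $P$ is a product of distinct non-singular factors, non-singular points of $Z(P)$ are dense on $\ell$ (outside a proper subvariety). At any such non-singular $z\in\ell$ the tangent space $T_z[Z(P)]$ must contain $\vec v$, so $\text{Angle}(\vec v,T_z[Z(P)])=0<\alpha_0$. Hence $(\theta,\nu)$ cannot be transverse to $W$ in any $B_j$. We may therefore assume $P|_\ell\not\equiv 0$, so that the univariate restriction $P\circ\ell:\mathbb R\to\mathbb R$ is a nonzero polynomial of degree at most $D$ and has at most $D$ zeros.

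\textbf{Step 2: extract a crossing of $\ell$ with $Z(P)$ per transverse ball.} For every $B_j$ with $(\theta,\nu)\in\bT_{j,\mathrm{trans}}$, pick a non-singular witness $z_j\in 10T_{\theta,\nu}\cap 2B_j\cap Z(P)$ satisfying $\text{Angle}(\vec v,T_{z_j}[Z(P)])>\alpha_0$. The transversality hypothesis translates into the quantitative bound
\[
|\nabla P(z_j)\cdot \vec v|\ \ge\ \sin(\alpha_0)\,|\nabla P(z_j)|\ \gtrsim\ R^{-1/2+2\delta}\,|\nabla P(z_j)|,
\]
so the $\vec v$-component of $\nabla P$ is a fixed fraction of $|\nabla P|$ at $z_j$. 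By a quantitative implicit function theorem in the spirit of Lemma~3.5 of \cite{lG} (whose quantitative control of derivatives for polynomials of degree $\leq D$ is polynomial in $D$), one gets that in a tube of radius $\sim R^{1/2+\delta}$ around $z_j$, the variety $Z(P)$ is a graph $x=\gamma(y)$ over the hyperplane perpendicular to $\vec v$ at $z_j$, with slope $|\nabla_y\gamma|\lesssim \mathrm{Poly}(D)\cdot\alpha_0^{-1}$. Denoting by $z_j^\perp$ the orthogonal projection of $z_j$ onto $\ell$ (so $|z_j-z_j^\perp|\le 10R^{1/2+\delta}$), one evaluates $\gamma$ at the perpendicular offset $z_j^\perp-z_j$ to produce a point $p_j:=z_j+\bigl(\,z_j^\perp-z_j\,,\,\gamma(z_j^\perp-z_j)\bigr)\in Z(P)\cap \ell$. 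A short calculation shows
\[
|p_j-z_j|\ \lesssim\ \mathrm{Poly}(D)\cdot R^{1/2+\delta}/\alpha_0\ =\ \mathrm{Poly}(D)\cdot R^{1-\delta}.
\]

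\textbf{Step 3: count.} Since $z_j\in 2B_j$ and $B_j$ has radius $R^{1-\delta}$, the crossing point $p_j$ lies inside $\mathrm{Poly}(D)\cdot B_j$. Because the family $\{B_j\}$ has bounded overlap, each point of $\mathbb R^3$ is contained in at most $\mathrm{Poly}(D)$ many dilates $\mathrm{Poly}(D)\cdot B_j$. The assignment $B_j\mapsto p_j\in \ell\cap Z(P)$ therefore has fibers of size $\lesssim\mathrm{Poly}(D)$. Combined with Step 1, the number of balls $B_j$ with $(\theta,\nu)\in\bT_{j,\mathrm{trans}}$ is at most $D\cdot\mathrm{Poly}(D)=\mathrm{Poly}(D)=R^{O(\epsilon^4)}$, as claimed.

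\textbf{Main obstacle.} The only non-formal step is the quantitative implicit function theorem in Step 2: one needs uniform $C^1$-control of the local graph of $Z(P)$ in a ball of radius $R^{1/2+\delta}$, not merely at $z_j$. This is exactly where the hypothesis that $P$ is a product of distinct non-singular polynomials is used, together with the fact that $\deg P\leq D$ to yield Lojasiewicz-type polynomial bounds on $|\nabla P|^{-1}$ away from singularities; this is the content imported from Lemma~3.5 of \cite{lG}.
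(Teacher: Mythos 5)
The paper does not prove this lemma at all; it cites it directly as Lemma~3.5 of \cite{lG}, so what you are being asked to supply is essentially a proof of Guth's lemma. Your proposal does not succeed, for two independent reasons.

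First, Step~1 is not correct. The transversality condition asks for \emph{some} non-singular witness $z\in 10T_{\theta,\nu}\cap 2B_j\cap Z(P)$ with a large angle, and such a $z$ need not lie on the central line $\ell$. Thus $\ell\subset Z(P)$ gives $\mathrm{Angle}=0$ only at non-singular points \emph{of} $\ell$; it does not prevent transverse witnesses elsewhere in the slab $10T_{\theta,\nu}$. So the reduction to ``$P|_\ell$ a nonzero univariate polynomial with $\le D$ roots'' is not justified, and the counting in Step~3 has nothing to count.

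Second, and more seriously, Step~2 is circular and its quantitative content is false. You invoke a ``quantitative implicit function theorem'' furnishing a graph presentation of $Z(P)$ over the hyperplane perpendicular to $\vec v$, with slope $\lesssim\mathrm{Poly}(D)\,\alpha_0^{-1}$, \emph{uniformly} on a ball of radius $\sim R^{1/2+\delta}$ about $z_j$. A pointwise bound $|\nabla P(z_j)\cdot\vec v|\gtrsim\alpha_0|\nabla P(z_j)|$ gives no such control: the direction of $\nabla P$ can turn arbitrarily fast within that ball (already for $P(x,y)=y-Nx^2$ with $N$ large, the graph $x=\pm\sqrt{y/N}$ over the perpendicular axis has unbounded slope near the vertex, even though at a nearby point the gradient is transverse to the $x$-axis with angle $\sim 1$). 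A uniform slope bound would require uniform control on higher derivatives of $P$ relative to $|\nabla P|$ over the whole ball, which does not follow from $\deg P\le D$ and non-singularity; \L{}ojasiewicz constants depend on $P$, not merely its degree. You acknowledge this gap in your final paragraph and propose to ``import'' the needed control from Lemma~3.5 of \cite{lG}; but that \emph{is} the lemma you are trying to prove, so the argument is circular. Guth's actual proof of Lemma~3.5 does not go through an implicit-function-theorem normal form at all: it passes to a 2-plane through the axis $\ell$, studies the degree-$\le D$ planar algebraic curve $Z(P)\cap\Pi$, and counts transverse crossings of the tube with the curve using algebraic structure of plane curves (bounded number of components/branches, Bezout-type bounds), which is a genuinely different — and algebraic rather than analytic — mechanism. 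To fix your write-up, you would need to replace Step~2 with an argument along those lines rather than an implicit function theorem.
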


%\begin{lemma} \label{L:tang}
%  $\#\{\omega: \exists\, s\in \bS_{j,{\rm tang}}\,\, \text{with}\,\, \omega_s=\omega\}\leq R^{1/2+O(\delta)}$ for any fixed $j$.
%\end{lemma}

%Lemma \ref{L:tang} indicates that only $O(R^{1/2+O(\delta)})$ many separated directions are tangential to $W$
%in any given $B_j$.  \\

For points $(x,t)\in B_j \cap W$, we could break up $e^{i t \Delta} f(x)$ into a transverse term and a tangent term.  However, when we analyze the tangent contribution in subsequent sections, we will need to use a bilinear structure.  So we do a more refined decomposition: we break $e^{it\Delta}f(x)$ into a linear transverse term and a bilinear tangent term.

We decompose $B(\xi_0,M^{-1})\subset\mathbb{R}^2$, the Fourier support of function $f$, into balls $\tau$ of radius $1/(KM)$. Here $K=K(\epsilon)$ is a large parameter. We write $f=\sum_{\tau}f_\tau$, where $\text{supp}\,\widehat{f_\tau}\subseteq \tau$.

We let $B_\epsilon :=\{(x,t)\in B(0,R)\times [0,R]\,:\,\exists\, \tau\,\, \text{s.t.}\, |e^{it\Delta}f_\tau(x)|>K^{-\epsilon^4}|e^{it\Delta}f(x)|\}$.  We will show by induction on the radius $(1/M)$ in frequency space that
the contribution from $B_\epsilon$ is acceptable. In fact, by the definition of $B_\epsilon$,
\begin{equation}
  \big\|\hichi_{B_\epsilon}e^{it\Delta} f (x)\big\|^p_{L^p_x L^q_t (\bB^*_R)}
  \leq K^{\epsilon^4 p}\sum_\tau \big\|e^{it\Delta} f_\tau (x)\big\|^p_{L^p_x L^q_t (\bB^*_R)} \,. \notag
\end{equation}
By applying \eqref{max5} in Theorem \ref{thm-4} the right-hand side is bounded by
\begin{align}
  \lesssim \, & K^{\epsilon^4p}\sum_\tau \left[C_\epsilon (KM)^{-\epsilon^2}R^\epsilon\|f_\tau\|_2\right]^p \notag \\
  \leq \,& K^{(\epsilon^4-\epsilon^2)p} \left[C_\epsilon M^{-\epsilon^2}R^\epsilon\|f\|_2\right]^p \notag
\end{align}
We choose $K=K(\epsilon)$ large so that $K^{(\epsilon^4-\epsilon^2)} \ll 1$, 
which yields by induction that the term involving $B_\epsilon$ plays 
an unimportant role. \\

For points $(x,t)$ not in $B_\epsilon$, we have the following decomposition into a transverse term and a bilinear tangent term.

\begin{lemma} \label{L:wall}
  For each point $(x,t)\in B_j\cap W$ satisfying $\max_{\tau} |e^{it\Delta}f_\tau(x)|\leq K^{-\epsilon^4}|e^{it\Delta}f(x)|$,
there exists a sub-collection $I$ of the collection of all possible $1/(KM)$
balls $\tau$, such that
  \begin{equation}
    |e^{it\Delta}f(x)| \lesssim |e^{it\Delta}f_{I,j,\rm trans}(x)| + K^{10}
{\rm Bil}(e^{it\Delta}f_{j, \rm tang}(x)),
  \end{equation}
  where
  \[
    f_{I,j,\rm trans}(x):=\sum_{\tau \in I} f_{\tau, j, \rm trans}(x),
  \]
  and the bilinear tangent term is given by
  \[
    {\rm Bil}(e^{it\Delta}f_{j,\rm tang}(x)):=\max_{\substack{\tau_1,\tau_2\\ \text{dist}(\tau_1,\tau_2)\geq 1/(KM)}}
|e^{it\Delta}f_{\tau_1,j,\rm tang}(x)|^{1/2}|e^{it\Delta}f_{\tau_2,j,\rm tang}(x)|^{1/2}.
  \]
\end{lemma}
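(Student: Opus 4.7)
The plan is a narrow/broad dichotomy in the $\tau$-variable. On $B_j\cap W$, every wave packet whose tube avoids $B_j\cap W$ contributes only a rapidly decaying tail, so modulo such errors
\[
e^{it\Delta}f(x)\;\approx\;\sum_\tau\bigl(e^{it\Delta}f_{\tau,j,{\rm tang}}(x)+e^{it\Delta}f_{\tau,j,{\rm trans}}(x)\bigr),
\]
since $\bT_{j,{\rm tang}}$ and $\bT_{j,{\rm trans}}$ partition the tiles that do meet $B_j\cap W$. Set $\eta:=|e^{it\Delta}f(x)|$, $A_\tau:=|e^{it\Delta}f_{\tau,j,{\rm tang}}(x)|$, and let $\tau_1$ be a maximiser of $A_\tau$; note that there are only $\sim K^2$ balls $\tau$.

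If $A_{\tau_1}\le K^{-5}\eta$, then $\sum_\tau A_\tau\lesssim K^{-3}\eta$, so by the triangle inequality $|\sum_\tau e^{it\Delta}f_{\tau,j,{\rm trans}}(x)|\ge\eta/2$; the choice $I:=\{\text{all }\tau\}$ then gives the required transverse bound. If instead $A_{\tau_1}>K^{-5}\eta$ and some $\tau_2$ with ${\rm dist}(\tau_1,\tau_2)\ge 1/(KM)$ also satisfies $A_{\tau_2}\ge K^{-5}\eta$, then
\[
{\rm Bil}(e^{it\Delta}f_{j,{\rm tang}}(x))\;\ge\;A_{\tau_1}^{1/2}A_{\tau_2}^{1/2}\;\ge\;K^{-5}\eta,
\]
so $\eta\le K^{10}\,{\rm Bil}$. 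The remaining sub-case is $A_{\tau_1}>K^{-5}\eta$ together with $A_\tau<K^{-5}\eta$ for every $\tau$ outside $N(\tau_1):=\{\tau:{\rm dist}(\tau,\tau_1)<1/(KM)\}$, a set of cardinality $O(1)$. Here I invoke the hypothesis, summing $|e^{it\Delta}f_\tau(x)|\le K^{-\epsilon^4}\eta$ over the $O(1)$ balls in $N(\tau_1)$ to obtain $|\sum_{\tau\in N(\tau_1)}e^{it\Delta}f_\tau(x)|\le\eta/2$ for $K$ sufficiently large in $\epsilon$. Consequently $|\sum_{\tau\notin N(\tau_1)}e^{it\Delta}f_\tau(x)|\ge\eta/2$, and subtracting the tangent portion of that sum (bounded by $\sum_{\tau\notin N(\tau_1)}A_\tau\lesssim K^{-3}\eta$) yields $|e^{it\Delta}f_{I,j,{\rm trans}}(x)|\gtrsim\eta$ for $I:=\{\tau:{\rm dist}(\tau,\tau_1)\ge 1/(KM)\}$.

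The main subtle point is that the hypothesis controls $|e^{it\Delta}f_\tau(x)|$ rather than the tangent quantity $A_\tau$ directly: a single $\tau_1$ could in principle have $A_{\tau_1}\sim\eta$ while the hypothesis forces near cancellation with $|e^{it\Delta}f_{\tau_1,j,{\rm trans}}(x)|$. The last sub-case above is precisely this scenario, and the hypothesis is what converts the near cancellation into large transverse mass concentrated on $N(\tau_1)$, which (via the triangle inequality) forces large transverse mass on the complementary collection $I$. The Schwartz-tail errors from restricting to $B_j\cap W$ are $O(R^{-N})$ for any $N$ and absorbed in the $\lesssim$.
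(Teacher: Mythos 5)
Your proposal is correct and follows essentially the same approach as the paper's proof: use a threshold on the tangent contribution to isolate an $O(1)$ cluster of "large-tangent" balls, handle a separated large pair via the bilinear term, apply the narrow hypothesis $\max_\tau |e^{it\Delta}f_\tau|\le K^{-\epsilon^4}|e^{it\Delta}f|$ to control the cluster, and use the triangle inequality to transfer the remaining mass onto the transverse sum over $I$. The only cosmetic difference is that you organize the three cases around the maximizer $\tau_1$ with threshold $K^{-5}$, while the paper fixes $I=\{\tau:|e^{it\Delta}f_{\tau,j,\rm tang}|\le K^{-10}|e^{it\Delta}f|\}$ once and splits two ways on whether $I^c$ contains a $1/(KM)$-separated pair; your observation about the hypothesis controlling $|e^{it\Delta}f_\tau|$ rather than the tangent part alone is exactly the point that makes the paper's Case B work.
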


\begin{proof}
Let $I$ be defined by $I:=\{\tau\,:\,|e^{it\Delta}f_{\tau,j,\rm tang}(x)|\leq K^{-10}|e^{it\Delta}f(x)| \}$.
  Then clearly
  \[
  I^c=\{\tau\,:\,|e^{it\Delta}f_{\tau,j,\rm tang}(x)|> K^{-10}|e^{it\Delta}f(x)| \}.
  \]
  If there exist $\tau_1, \tau_2 \in I^c$ with $\text{dist}(\tau_1,\tau_2)\geq 1/(KM)$, then $|e^{it\Delta}f(x)|
\lesssim K^{10} \text{Bil}(e^{it\Delta}f_{j,\rm tang}(x))$. Otherwise, the number of balls $\tau$ in $I^c$ is $O(1)$, and
  \[
    \sum_{\tau\in I^c}|e^{it\Delta}f_\tau (x)|\leq CK^{-\epsilon^4}|e^{it\Delta}f(x)|\leq \frac{1}{10}|e^{it\Delta}f(x)|.
  \]
Hence, by the fact that $f=\sum_\tau f_\tau$ and the definition of $I$,
  \begin{align}
    \frac{9}{10}|e^{it\Delta}f(x)|&\leq |\sum_{\tau\in I}e^{it\Delta}f_\tau(x)| \notag \\
                                  &\lesssim |e^{it\Delta}f_{I,j,\rm tang}(x)| + |e^{it\Delta}f_{I,j,\rm trans}(x)| \notag \\
                                  &\leq CK^{-8} |e^{it\Delta}f(x)| + |e^{it\Delta}f_{I,j,\rm trans}(x)|, \notag
  \end{align}
  which implies that $|e^{it\Delta}f(x)|\lesssim |e^{it\Delta}f_{I,j,\rm trans}(x)|$.
\end{proof}

By Lemma \ref{L:wall} we can now estimate the wall contribution in (\ref{cell-W}) by
\begin{align}
  & \big\|\hichi_W\, e^{it\Delta} f (x)\big\|^p_{L^p_x L^q_t (\bB^*_R)} \notag \\
\lesssim & \big\|\hichi_{B_\epsilon}e^{it\Delta} f (x)\big\|^p_{L^p_x L^q_t (\bB^*_R)} \label{Beps} \\
+ & \sum_{j}\big\|\max_{I}\hichi_{B_j\cap W}\,|e^{it\Delta} f_{I,j,\rm trans} (x)|\big\|^p_{L^p_x L^q_t (\bB^*_R)}\label{tran0}\\
  +&K^{10p} \sum_j\big\|\hichi_{B_j\cap W}{\rm Bil} (e^{it\Delta} f_{j,\rm tang} (x))\big\|^p_{L^p_x L^q_t (\bB^*_R)} \,.\label{Bil}
\end{align}
\\

As we explained above, the first term (\ref{Beps}) obeys an acceptable bound by induction on $M$.  We now estimate the linear transverse term (\ref{tran0}). The term (\ref{tran0}) is dominated by
\begin{equation}\label{maxI}
 \sum_{j}\sum_{I\subseteq \mathcal T}\big\|\hichi_{B_j\cap W}\,e^{it\Delta} f_{I,j,\rm trans} (x)\big\|^p_{L^p_x L^q_t (\bB^*_R)}\, ,
\end{equation}
where $\mathcal T$ is the collection of all possible $1/(KM)$-balls in $B(\xi_0, 1/M)$, and the sum is taken over all
subsets of $\mathcal T$. Since there are at most $2^{K^2}$ $I$'s,  we apply \eqref{max5} in Theorem \ref{thm-4} with radius $R^{1-\delta}$ to obtain
\begin{equation}
 (\ref{maxI}) \leq \sum_{j}  2^{K^2}  \left[C_\epsilon M^{-\epsilon^2} R^{(1-\delta)\epsilon}\|f_{j,\rm trans}\|_2\right]^p\,,
\end{equation}
which is bounded by, using Lemma \ref{L:trans},
\begin{equation}
 2^{K^2} R^{O(\epsilon^4)-\delta\epsilon p}  \left[C_\epsilon M^{-\epsilon^2} R^{\epsilon}\|f\|_2\right]^p\,.
\end{equation}
Since $\delta=\epsilon^2$, it is clear that
$2^{K^2} R^{O(\epsilon^4)-\delta\epsilon p} < 1/100$ and so the induction on the transverse term closes. \\

It remains to estimate the bilinear tangent term (\ref{Bil}).
We state the result on the bilinear maximal estimate in this section, and prove it in Section \ref{2maxsml}.

\begin{proposition}\label{Bil-prop}
For $p>3$, the following maximal estimate of the bilinear tangent term holds, uniformly in $M$:
 \begin{equation}\label{bi-est}
 \left( \int_{B(0,R)} \underset{t:(x,t)\in W\cap B_j}{\text{sup}}\big|{\rm Bil}(e^{it\Delta}f_{j,\rm tang}(x))\big|^p dx \right)^{1/p}
  \leq\, C_\epsilon R^{\epsilon/2} \|f\|_2\,.
\end{equation}
\end{proposition}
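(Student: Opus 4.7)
The plan is to reduce Proposition \ref{Bil-prop} to a bilinear refined Strichartz inequality on the $R^{1/2+\delta}$-neighborhood of the algebraic surface $Z(P)\cap 2B_j$, in the spirit of the curved-surface generalization of Theorem \ref{refstrichintro} that the introduction alludes to (Theorem \ref{refstrich}). The argument has three stages.

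\emph{Stage 1: removing both suprema.} For any $(\theta,\nu)\in\bT_{j,\text{tang}}$ the tube $T_{\theta,\nu}\cap B_j$ is contained in $N_{R^{1/2+\delta}}(Z(P))$, so for each fixed $x\in B(0,R)$ the vertical slice $S(x):=\{t:(x,t)\in W\cap B_j\}$ lies in at most $D+1=R^{O(\eps^4)}$ intervals, each of length $\lesssim R^{1/2+\delta}$. Since $\widehat{\psi^*_{\theta,\nu}}$ has $\xi_3$-support in the $R^{-1}$-neighborhood of the paraboloid, the function $t\mapsto e^{it\Delta}f_{\tau_i,j,\text{tang}}(x)$ is essentially locally constant at temporal scale $R$, which is larger than $|S(x)|$. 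A standard locally-constant argument then dominates $\sup_{t\in S(x)}$ by $R^{O(\delta)}|S(x)|^{-1/q}\|\cdot\|_{L^q_t(S(x))}$ with $q=1/\eps^4$. Similarly, there are only $O((KM)^2)\leq R^{O(1)}$ pairs $(\tau_1,\tau_2)$, so a triangle-inequality union bound reduces everything to a fixed pair.

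\emph{Stage 2: the key bilinear estimate.} The essential claim becomes: for $p>3$ and any pair of $1/(KM)$-caps with $\mathrm{dist}(\tau_1,\tau_2)\geq 1/(KM)$,
\begin{equation*}
\bigl\||e^{it\Delta}f_{\tau_1,j,\text{tang}}|^{1/2}|e^{it\Delta}f_{\tau_2,j,\text{tang}}|^{1/2}\bigr\|_{L^p_xL^q_t(W\cap B_j)}\lesssim R^{\eps/4}\|f\|_2.
\end{equation*}
To establish it, I would combine (a) the $l^2$ decoupling theorem of Bourgain--Demeter at scale $R^{-1/2}$ on the paraboloid, which trades the $L^p$ bound for an $L^2$ wave-packet-orthogonality statement at the price of $R^{O(\delta)}$; (b) the bilinear transversality $\sim 1/(KM)$ between tubes from $\tau_1$ and tubes from $\tau_2$, yielding a Córdoba--Fefferman-type bound on the overlap of any pair of such tubes; and (c) the tangency condition, which forces every tube to live in the $R^{1/2+\delta}$-thickening of the 2-dimensional variety $Z(P)\cap 2B_j$, thereby effectively eliminating one spatial dimension.

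\emph{Stage 3 and main obstacle.} The hard part is ingredient (c): executing the bilinear refined Strichartz estimate on a curved, high-degree algebraic surface rather than on a flat plane (the model case discussed just after Theorem \ref{refstrichintro}). The strategy is to partition $2B_j$ into $R^{1-\delta}$-subballs, on which the curvature of $Z(P)$ is controlled by its degree $D=R^{\eps^4}\ll R^{\delta}$ so that $Z(P)$ is approximated by an affine plane up to fluctuations $\lesssim R^{1/2+\delta}$. On each such subball one applies the planar bilinear refined Strichartz estimate (the bilinear analogue of Theorem \ref{refstrichintro} on the affine plane modeling $Z(P)$ locally, which we expect to prove in the curved-surface section alluded to in the introduction), then sums over subballs using Lemma \ref{L:trans} to control double-counting of transverse tubes. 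The cumulative loss from the partition, from decoupling, and from the algebraic degree is $R^{O(\eps^4)+O(\delta)}$, which comfortably fits inside the budget $R^{\eps/4}$ and yields the desired $R^{\eps/2}\|f\|_2$ after combining with Stage 1.
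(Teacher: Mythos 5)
Your overall instinct---that Proposition \ref{Bil-prop} should be reduced to a bilinear refined Strichartz estimate for solutions tangent to the curved algebraic surface $Z(P)$---is exactly right and is indeed the paper's strategy (Theorem \ref{bilrefstrich}, its $M$-dependent refinement Proposition \ref{bilrefstrichM}, and the intermediate Proposition \ref{2maxvar}). However, your Stage 1 and Stage 3 contain steps that fail. In Stage 1, the function $t\mapsto e^{it\Delta}f_{\tau_i,j,{\rm tang}}(x)$ is not locally constant at temporal scale $R$: the $R^{-1}$ in \eqref{eq:psishat} is the thickness \emph{transverse} to the paraboloid, not the $\xi_3$-extent of the Fourier support; over a single $\theta$ that extent is already $\sim R^{-1/2}$, and over the cap $\tau_i$ of radius $1/(KM)$ it is $\sim 1/M$, so after an optimal modulation the function is locally constant only at scale $\sim M\ll R^{1/2}$, while $S(x)$ has length up to $\sim R^{1/2+\delta}D$. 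Hence $\sup_{t\in S(x)}\lesssim R^{O(\delta)}|S(x)|^{-1/q}\|\cdot\|_{L^q_t(S(x))}$ is simply false. The paper instead uses the full three-dimensional Fourier localization: on each $R^{1/2}$-cube $Q$, the localized $\eta_Q e^{it\Delta}f_i$ has Fourier support in a tilted rectangle $A^*(Q)$ of dimensions $ER^{-1/2}\times M^{-1}\times M^{-1}$, so the bilinear quantity is locally constant on dual rectangles $A_k(Q)$ of size $M\times M\times E^{-1}R^{1/2}$ whose $x$-plane projections have area $ME^{-1}R^{1/2}$. That tilted projection geometry is what gives the crucial bound $|U|\lesssim M^{-1}|X|$, hence the $M^{-1/6}$ that cancels the $M^{1/6}$ loss in Proposition \ref{bilrefstrichM}; your reduction discards this structure entirely.

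In Stage 3, flattening $Z(P)$ to an affine plane over $R^{1-\delta}$-balls does not work: the degree bound $D=R^{\eps^4}$ controls how many times a line can cross $Z(P)$, but it places no bound on curvature (a degree-2 surface can be a sphere of arbitrarily small radius), so $Z(P)$ can deviate from any fixed plane by far more than $R^{1/2+\delta}$ over a ball of radius $R^{1-\delta}$. The paper's Lemma \ref{deconQj} handles curvature entirely at the scale of a single $R^{1/2}$-cube $Q$: all tangent wave packets through a given $Q$ have directions within $ER^{-1/2}$ of a single tangent plane $T_{z_0}Z$, confining the localized Fourier support to a strip $S$ of width $ER^{-1/2}$; projecting onto an appropriate 2-plane $\Pi$ then yields exactly the parabola-neighborhood setup for 2D Bourgain--Demeter decoupling, with no global flattening needed or available. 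Note also that Lemma \ref{L:trans} concerns \emph{transverse} tubes and plays no role in the tangent estimate; the summation over cubes in Theorem \ref{refstrich} is controlled instead by dyadic pigeonholing and the multiplicity bookkeeping ($\mu$, the sets $Y_\Box$, the relation \eqref{geom2}), which your outline does not supply.
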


Given Proposition \ref{Bil-prop}, we estimate the bilinear tangent term (\ref{Bil}) as follows, for any $q>1/\epsilon^4$,
\begin{align}
  &\big\|\hichi_{B_j\cap W}{\rm Bil} (e^{it\Delta} f_{j,\rm tang} (x))\big\|^p_{L^p_x L^q_t (\bB^*_R)} \notag \\
  \leq & R^{p/q} \int_{B(0,R)} \underset{t:(x,t)\in W\cap B_j}{\text{sup}}\big|{\rm Bil}(e^{it\Delta}f_{j,\rm tang}(x))\big|^p dx \notag \\
  \leq & R^{O(\delta)+\epsilon p/2} \|f\|_2^p\,. \notag
\end{align}
Hence Theorem \ref{thm-4} follows from Proposition \ref{Bil-prop} and the inductions. \\

\section{Variations on the Strichartz inequality using decoupling} \label{locref}
\setcounter{equation}0

In this section we obtain both linear and bilinear local refinements of the Strichartz inequality,
via the Bourgain-Demeter $l^2$-decoupling theorem \cite{BD}.
In Section \ref{2maxsml} we will use the bilinear refinement to prove the bilinear maximal
estimate in Proposition \ref{Bil-prop}.\\

For the bilinear tangent term in Proposition \ref{Bil-prop}, all wave packets are tangent to a variety.  Suppose that $Z = Z(P)$ where $P$ is a product of non-singular polynomials.  For any tile $(\theta,\nu) \in\bT$,
we say that $T_{\theta,\nu}$ is \emph{$E R^{-1/2}$-tangent} to $Z$ if

$$T_{\theta,\nu}\subset N_{E R^{1/2}}Z \cap \bB^*_R, and$$

\begin{equation}
 \text{Angle}(G_0(\theta),T_z[Z(P)])\leq E R^{-1/2}
\end{equation}
for any non-singular point $z\in N_{2 E R^{1/2}} ( T_{\theta,\nu}) \cap 2\bB^*_R\cap Z$.

Let
\[
\bT_Z (E):=\{(\theta,\nu)\,|\,T_{\theta,\nu} \text{ is $E R^{-1/2}$-tangent to}\, Z\}\,,
\]
and  we say that $f$ is concentrated in wave packets from $\bT_Z(E)$ if
\[
 \sum_{(\theta,\nu)\notin \bT_Z(E)} \|f_{\theta,\nu}\|_2 \leq {\rm RapDec}(R)\|f\|_2.
\]

\noindent Since the radius of $T_{\theta, \nu}$ is $R^{1/2 + \delta}$, $R^\delta$ is the smallest interesting value of $E$.  

In this section, we establish the following local refinements of the Strichartz estimates.

\begin{theorem} \label{refstrich} Suppose that $f$ has Fourier support in $B^2(0,1)$, and is
concentrated in wave packets from $\bT_Z(E)$, where $Z=Z(P)$ and $P$ is a product of distinct non-singular polynomials.  Suppose that $Q_1, Q_2, ...$
are lattice  $R^{1/2}$-cubes in $B^3(R)$, so that
$$ \| \eit f \|_{L^6(Q_j)} \textrm{ is essentially constant in $j$}. $$
\noindent Suppose that these cubes are arranged in horizontal strips of the form $\ZR \times \ZR \times \{t_0, t_0 + R^{1/2} \}$, and that each strip contains $\sim \sigma$ cubes $Q_j$.  Let $Y$ denote $\bigcup_j Q_j$. Then
\begin{equation}
\label{linref}\| \eit f \|_{L^6(Y)} \lessapprox E^{O(1)} R^{-1/6} \sigma^{-1/3} \| f \|_{L^2}.
\end{equation}
\end{theorem}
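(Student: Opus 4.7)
The plan is to combine the Bourgain--Demeter $\ell^{2}$-decoupling theorem for the paraboloid with the geometric constraints imposed by the $E$-tangency of the wave packets to $Z$, the sparsity of $Y$ (only $\sigma$ cubes per horizontal strip), and the constancy hypothesis on $\|e^{it\Delta}f\|_{L^{6}(Q_{j})}$.

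Since $e^{it\Delta}f$ is Fourier-supported in the $R^{-1}$-neighborhood of the paraboloid over $B^{2}(0,1)$, the Bourgain--Demeter theorem at scale $R$ gives
\begin{equation*}
\|e^{it\Delta}f\|_{L^{6}(Y)}^{2}\lessapprox \sum_{\theta}\|e^{it\Delta}f_{\theta}\|_{L^{6}(w_{Y})}^{2},
\end{equation*}
where $\theta$ ranges over $R^{-1/2}$-caps in $B^{2}(0,1)$ and $w_{Y}$ is a smooth weight adapted to $Y$. Expanding $e^{it\Delta}f_{\theta}=\sum_{\nu}c_{\theta,\nu}\psi_{\theta,\nu}$ as essentially disjoint wave packets on tubes $T_{\theta,\nu}$ of dimensions $R^{1/2}\times R^{1/2}\times R$, and restricting via the tangency hypothesis to the subcollection $\bT_{Z}(E)$ (so each contributing tube is contained in $N_{ER^{1/2}}Z$), a direct pointwise computation yields
\begin{equation*}
\|e^{it\Delta}f_{\theta}\|_{L^{6}(w_{Y})}^{6}\approx R^{-3}\sum_{(\theta,\nu)\in\bT_{Z}(E)}|c_{\theta,\nu}|^{6}\,|T_{\theta,\nu}\cap Y|.
\end{equation*}

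Next, I exploit the sparsity-and-constancy structure. Each tube $T_{\theta,\nu}$ crosses each of the $R^{1/2}$ horizontal strips at most once, in a region contained in a single $R^{1/2}$-cube; since only $\sigma$ out of the $\sim R$ available cubes per strip belong to $Y$, summing the incidences over all tubes gives the average bound $|T_{\theta,\nu}\cap Y|\lessapprox \sigma R$. The constancy of $\|e^{it\Delta}f\|_{L^{6}(Q_{j})}$ across all $j$ enforces that the dominant tubes indeed behave like this average: were some tubes to concentrate on a few cubes (the worst case $|T\cap Y|=R^{2}$), the $L^{6}$-norms across cubes of $Y$ would be unequal. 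After passing from $\ell^{6}$ to $\ell^{2}$ in $\nu$ by H\"older (with a cost controlled by the $E^{O(1)}$-count of effective tangent directions at each point of $Z$), and combining with the orthogonality $\sum_{\theta,\nu}|c_{\theta,\nu}|^{2}\lesssim \|f\|_{L^{2}}^{2}$, one obtains
\begin{equation*}
\sum_{\theta}\|e^{it\Delta}f_{\theta}\|_{L^{6}(w_{Y})}^{2}\lessapprox E^{O(1)}R^{-1/3}\sigma^{-2/3}\|f\|_{L^{2}}^{2},
\end{equation*}
which yields $\|e^{it\Delta}f\|_{L^{6}(Y)}\lessapprox E^{O(1)}R^{-1/6}\sigma^{-1/3}\|f\|_{L^{2}}$ as claimed.

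The hardest part is converting the average incidence estimate $|T_{\theta,\nu}\cap Y|\lessapprox \sigma R$ into a bound usable inside the $\ell^{6}$-sum: the naive worst-case bound $R^{2}$ recovers only the standard Bourgain--Demeter estimate $R^{-1/6}\|f\|_{L^{2}}$ without the $\sigma^{-1/3}$ gain. Making this rigorous will likely require a dyadic pigeonhole on $|T_{\theta,\nu}\cap Y|$, a decoupling applied strip-by-strip at the intermediate scale $R^{1/2}$ (where each strip's slab thickness matches the tube radius), and careful use of the polynomial geometry of $Z$, exploiting that $\deg Z\le D=R^{\eps^{4}}$ and that $Z$ is a union of non-singular varieties so that the tangent direction set at any point of $Z$ is a low-degree curve in the $c(\theta)$-plane.
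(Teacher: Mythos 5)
Your proposal is a genuinely different route from the paper's, but it has a concrete gap at its central step. The intermediate bound $|T_{\theta,\nu}\cap Y|\lessapprox \sigma R$ is false, and the constancy hypothesis does \emph{not} enforce it, even heuristically. Consider the sharp example the paper itself discusses: let $f$ be a sum of $\sigma$ disjoint wave packets, all with the same frequency cap $\theta$, and take $Y$ to be the union of the $R^{1/2}$-cubes covering those $\sigma$ parallel tubes. Then $\|e^{it\Delta}f\|_{L^6(Q_j)}$ is essentially constant over all $Q_j\subset Y$, each horizontal strip contains $\sim\sigma$ cubes, and yet every contributing tube satisfies $|T_{\theta,\nu}\cap Y|=|T_{\theta,\nu}|\sim R^2\gg\sigma R$. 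In fact, if one plugs the (correct, worst-case) value $|T\cap Y|\sim R^2$ into your chain one merely recovers $\|e^{it\Delta}f\|_{L^6(Y)}\lessapprox R^{-1/6}\|f\|_2$ with no $\sigma$-gain, and the tube-incidence average $\sigma R$ cannot be substituted for the worst case without extra structure. You flag this yourself in the last paragraph, but the remedies you sketch (strip-by-strip decoupling at a single scale, a generic dyadic pigeonhole on $|T\cap Y|$) do not in themselves create the mechanism that makes the $\sigma$-gain appear.

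The paper's proof is built to manufacture exactly that mechanism, and it works at two scales rather than one. It first splits $f$ into pieces $f_\Box$ localized on $R^{-1/4}$-caps in frequency and $R^{3/4}$-balls in space, not directly into $R^{-1/2}$-caps. The tangency hypothesis enters through Lemma~\ref{deconQj}: on any $R^{1/2}$-cube $Q\subset N_{ER^{1/2}}Z$, the contributing caps lie in a strip of width $ER^{-1/2}$, so after projecting in the appropriate direction one lands on a $2$-dimensional parabola and can apply the Bourgain--Demeter theorem in two dimensions to decouple $e^{it\Delta}f$ on $Q$ into the pieces $e^{it\Delta}f_\Box$ (this is also where the condition $E\leq R^{1/4}$ and the factor $E^{O(1)}$ actually come from, rather than from a count of ``effective tangent directions''). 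Then the crucial step is a geometric estimate on the multiplicity $\mu$ of the selected sets $Y_\Box$ covering a cube $Q_j\subset Y'$: since $Y_\Box$ has $\lesssim\sigma_\Box$ tubes per strip one gets $|Y_\Box\cap Y|\lesssim\frac{\sigma_\Box}{\sigma}|Y|$, and hence $\mu\lessapprox\frac{\sigma_\Box}{\sigma}|\mathbb{B}|$. This is where the global sparsity $\sigma$ is actually used, and it is entirely independent of any per-tube incidence bound. Finally the estimate at scale $R^{1/2}$ is supplied by induction on $R$ via parabolic rescaling (after checking that the tangency condition is stable under that rescaling). Without that inductive input and the multiplicity bound, your single-scale argument cannot close.
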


To get some intuition, we consider a special case of Theorem \ref{refstrich}, in which the variety $Z$ is
naturally replaced by a 2-plane $V$, and $E \approx 1$.  In the planar case, all wave packets are contained in the
$\approx R^{1/2}$-neighborhood of $V$, and the absolute value $|\eit f(x)|$ is essentially constant along a
certain direction which is roughly normal to $V$. Note that $\eit f(x)|_V$ is a Schr\"odinger solution in
dimension 2. Denote $\eit f(x)|_V$ by $\eir h(y)$ for some function $h$ with Fourier support in $B^1(1)$, where $(y,r)$ are coordinates of $V$.
Hence the conclusion in Theorem \ref{refstrich} can be rephrased in terms of $h$. Indeed, observe that
 $$\| \eit f (x)\|^6_{L^6(Y)} \sim R^{1/2} \| \eir h(y)\|^6_{L^6(Y\cap V)},$$
  $$ \|f\|^2_2\sim R^{-1}\|\eit f\|_{L^2(B^3(R))}^2 \sim R^{-1}R^{1/2}\|\eir h\|^2_{L^2(B^3_R\cap V)}\sim R^{1/2}
\|h\|_2^2.$$
Therefore the estimate \eqref{linref} is equivalent to
\begin{equation}
\|\eir h\|_{L^6(Y\cap V)} \lessapprox \sigma^{-1/3}\|h\|_{L^2}.
\end{equation}
It follows from the Strichartz inequality that $\| \eir h \|_{L^6(Y\cap V)} \lesssim \| h \|_{L^2}$.  We get an
improvement when $\sigma$ is large.  The condition that $\sigma$ is large forces the solution $\eit f$ to be spread out in space, and we will exploit this spreading
out to get our improvement.

Moreover, Theorem \ref{refstrich} has the following bilinear refinement.

\begin{theorem} \label{bilrefstrich} For functions $f_1$ and $f_2$ with separated Fourier supports in $B^2(0,1)$,
separated by $\sim 1$, suppose that $f_1$ and $f_2$ are concentrated in wave packets from $\bT_Z(E)$, where $Z=Z(P)$ and $P$ is a product of distinct non-singular polynomials.
Suppose that $Q_1, Q_2, \cdots, Q_N$ are lattice  $R^{1/2}$-cubes in $B^3(R)$, so that for each $i$,
$$ \| \eit f_i \|_{L^6(Q_j)} \textrm{ is essentially constant in $j$}. $$
Let $Y$ denote $\bigcup_{j=1}^{N} Q_j$. Then
$$ \left\| |e^{i t \Delta} f_1 e^{i t \Delta} f_2 |^{1/2} \right\|_{L^6(Y)} \lessapprox E^{O(1)} R^{-1/6} N^{-1/6} \| f_1 \|_{L^2}^{1/2}\| f_2 \|_{L^2}^{1/2}. $$
\end{theorem}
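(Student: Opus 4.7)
The plan is to combine the linear refined Strichartz estimate (Theorem \ref{refstrich}) with the bilinear $L^2$ Strichartz estimate for functions with transversal Fourier supports, and to exploit a dichotomy based on how many cubes $Q_j$ lie in each horizontal time strip. First I would perform standard dyadic pigeonhole reductions so that (i) the wave packets of each $f_i$ contributing to $Y$ have roughly equal $L^2$ coefficients, giving the local constancy $\|e^{it\Delta}f_i\|_{L^\infty(Q_j)}\lesssim R^{-1/4}A_i$ with $A_i=\|e^{it\Delta}f_i\|_{L^6(Q_j)}$ the common value in $j$, and (ii) the $N$ cubes are organized so that every nonempty horizontal strip $\ZR^2\times[t_0,t_0+R^{1/2}]$ contains approximately the same number $\sigma$ of cubes, at a cost of an $R^\epsilon$ factor.

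Next I would split into cases based on the size of $\sigma$. In the regime $\sigma \gtrsim N^{1/2}$, a Cauchy-Schwarz step reduces the bilinear quantity to the linear ones, and Theorem \ref{refstrich} applied to each $f_i$ separately produces
\[
\bigl\| |e^{it\Delta}f_1\,e^{it\Delta}f_2|^{1/2}\bigr\|_{L^6(Y)}^2 \;\lessapprox\; E^{O(1)} R^{-1/3}\sigma^{-2/3}\|f_1\|_{L^2}\|f_2\|_{L^2},
\]
which matches the claimed bound after using $\sigma^{-2/3}\leq N^{-1/3}$. The harder regime is $\sigma\lesssim N^{1/2}$, where the cubes spread across $\gtrsim N^{1/2}$ time strips and the linear estimate alone is insufficient. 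Here I would invoke the bilinear $L^2$ Strichartz estimate $\|e^{it\Delta}f_1\,e^{it\Delta}f_2\|_{L^2(\ZR^3)}\lesssim\|f_1\|_{L^2}\|f_2\|_{L^2}$, available because the Fourier supports are separated by $\sim 1$, in order to extract the missing $N^{-1/6}$ gain. A natural way to combine this $L^2$ control with the $L^6$ norm is to $l^2$-decouple each $e^{it\Delta}f_i$ at scale $R^{1/2}$ into $R^{-1/4}$-caps $\tau$ via Bourgain-Demeter, and then to apply bilinear $L^2$ pairwise on transversal cap products, gaining an orthogonality factor tied to how many cubes are available. The tangency factor $E^{O(1)}$ enters through the geometric analysis of the class $\bT_Z(E)$, controlled as in the linear case via Lemma 3.5 of \cite{lG}.

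The main obstacle is precisely the regime $\sigma\lesssim N^{1/2}$: a naive combination of bilinear $L^2$ with the $L^\infty$ bound on each $Q_j$ only produces an estimate of the form $A_1A_2\lesssim R^{-1/4}N^{-1/2}\|f_1\|_{L^2}\|f_2\|_{L^2}$, which falls short of the required $R^{-1/3}N^{-2/3}\|f_1\|_{L^2}\|f_2\|_{L^2}$ by a polynomial factor in $R$. Overcoming this requires a more careful argument: applying decoupling at the intermediate scale $R^{1/2}$, combining it with bilinear $L^2$ on the cap-level wave packets, and exploiting the spread of the cubes across time strips to refine the bilinear $L^2$ into a ``refined bilinear $L^2$'' whose constant depends on $N$ in the desired way. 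Orchestrating these three scales ($R$, $R^{1/2}$, $R^{-1/4}$) while maintaining the $E^{O(1)}$ bookkeeping for wave packets tangent to $Z$ constitutes the core technical work of the proof.
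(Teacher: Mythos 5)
Your opening Cauchy--Schwarz step is exactly the paper's first move: $\| |e^{it\Delta}f_1 e^{it\Delta}f_2|^{1/2}\|_{L^6(Y)} \le \prod_i \|e^{it\Delta}f_i\|_{L^6(Y)}^{1/2}$. Your Case~1 ($\sigma\gtrsim N^{1/2}$), applying Theorem~\ref{refstrich} to each factor after pigeonholing to a strip structure, is also correct. But your Case~2 is a genuine gap, and you yourself flag it: the naive combination of bilinear $L^2$ Strichartz with the $L^\infty$ bound on each $Q_j$ gives $A_1A_2\lesssim R^{-1/4}N^{-1/2}\|f_1\|\|f_2\|$, which is weaker than the needed $R^{-1/3}N^{-2/3}\|f_1\|\|f_2\|$ by a polynomial factor, and the proposed fix (``refined bilinear $L^2$'' via decoupling at scale $R^{1/2}$) is not constructed. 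As written, this does not close.

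More importantly, the paper's proof does not invoke bilinear $L^2$ Strichartz at all and does not split into cases on $\sigma$. Instead, it runs the \emph{same} machinery as the proof of Theorem~\ref{refstrich} on each factor $\|e^{it\Delta}f_i\|_{L^6(Y)}$ --- decompose $f_i=\sum_\Box f_{i,\Box}$ into $R^{3/4}\times R^{3/4}\times R$-boxes, dyadically pigeonhole (simultaneously for $i=1,2$) to choose $Y_{i,\Box}$ and a subset $Y'\subset Y$ on which each $Q_j$ lies in $\sim\mu_i$ of the sets $Y_{i,\Box}$, apply Lemma~\ref{deconQj} and the inductive hypothesis (\ref{indpar}) --- arriving at the linear bound (\ref{forthm2'}) for each $i$. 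The only new input in the bilinear case, and the one your proposal is missing, is a \emph{geometric} multiplicity estimate that exploits the transversality of the boxes: if $\Box_1\in\mathbb{B}_1$ and $\Box_2\in\mathbb{B}_2$, their long axes make angle $\sim 1$, so $\Box_1\cap\Box_2$ lies in a ball of radius $\sim R^{3/4}$, and hence $Y_{\Box_1}\cap Y_{\Box_2}$ contains $\lesssim \sigma_{1,\Box}\sigma_{2,\Box}$ cubes $Q_j$. Double-counting the $\approx N$ cubes of $Y'$ against the pairs $(\Box_1,\Box_2)$ containing them yields
\[
N\,\mu_1\mu_2 \;\lessapprox\; \sigma_{1,\Box}\sigma_{2,\Box}\,|\mathbb{B}_1|\,|\mathbb{B}_2|,
\]
which replaces the single-cap multiplicity bound $\mu\lessapprox(\sigma_\Box/\sigma)|\mathbb{B}|$ from the linear proof. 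Substituting this into the product of the two linear bounds (\ref{forthm2'}) produces the $N^{-1/6}$ gain directly, uniformly in $\sigma$, with no dichotomy. This geometric incidence count is the missing idea; the bilinear $L^2$ Strichartz route you sketch is a different mechanism and, as you observe, does not produce the right exponents.
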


\subsection{Proof of Theorem {\ref{refstrich}}}
 The proof uses the Bourgain-Demeter $l^2$-decoupling theorem, together with induction on the radius
and parabolic rescaling. First we recall the decoupling result of Bourgain and Demeter in \cite{BD}.
\begin{theorem}[Bourgain-Demeter]  \label{bourdem} Suppose that the $R^{-1}$-neighborhood of the unit parabola in
$\ZR^2$ is divided into $R^{1/2}$ disjoint rectangular boxes $\tau$, each with dimensions $R^{-1/2} \times R^{-1}$.
Suppose $\widehat F_\tau$ is supported in $\tau$ and $F = \sum_\tau F_\tau$.
Then
$$ \| F \|_{L^6(\ZR^2)} \lessapprox \left( \sum_\tau \| F_\tau \|_{L^6(\ZR^2)}^2 \right)^{1/2}. $$
\end{theorem}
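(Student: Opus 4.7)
My plan is to prove the Bourgain--Demeter decoupling theorem by an induction-on-scales argument, bootstrapping a multilinear estimate from the Bennett--Carbery--Tao multilinear Kakeya inequality, and then passing from multilinear to linear by a Córdoba/Tao--Vargas-style bush argument. Let $D(R)$ denote the best constant in the inequality stated in Theorem \ref{bourdem}. The ultimate goal is $D(R) \lesssim_\eps R^\eps$. The strategy is to show $D(R) \lesssim_\eps R^\eps D(R^{1/2})^{1/2} D(R^{1/2})^{1/2}$ (schematically), or more honestly a recursion of the form $D(R) \le C_\eps R^\eps D(R^{1-\kappa})$ for some fixed $\kappa>0$ and then iterate roughly $\log \log R$ times.

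\textbf{Step 1: Parabolic rescaling.} First I would prove the following rescaling lemma: if $\tau$ is a rectangular cap of dimensions $R^{-1/2}\times R^{-1}$ on the $R^{-1}$-neighborhood of the parabola, and $\wh{F_\tau}$ is supported in $\tau$, then partitioning $\tau$ into smaller caps of size $\rho^{-1/2}\times \rho^{-1}$ (for some intermediate $\rho < R$) and applying an affine change of variables that maps $\tau$ to the standard neighborhood of the parabola at scale $R/\rho$ gives a decoupling inequality at scale $R/\rho$. This provides the induction-on-scales engine.

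\textbf{Step 2: Bilinear reduction.} I would split the parabola into two ``far apart'' arcs $\alpha_1,\alpha_2$ with transversality separation $\sim 1$, and introduce the bilinear decoupling constant $M(R)$ controlling $\||F_{\alpha_1} F_{\alpha_2}|^{1/2}\|_{L^6}$ by $\ell^2$-sums of the $F_\tau$ over $\tau\subset \alpha_i$. A standard Tao--Vargas/Córdoba argument recovers linear decoupling from bilinear decoupling at nearby scales: decompose $\tau$'s into dyadic scales of separation, bound the diagonal contribution by Cauchy--Schwarz and the off-diagonal by $M(R)$, then rescale. This reduces the problem to bounding $M(R)$.

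\textbf{Step 3: Multilinear Kakeya and the key recursion.} This is where the main work, and the main obstacle, lies. Here I would invoke the Bennett--Carbery--Tao multilinear Kakeya inequality: the $L^3$-norm on $\ZR^3$ of a product of tubes pointing in transverse directions is controlled by the product of their $L^3(\text{cardinalities})$. Applied to wave packet decompositions of $F_{\alpha_1},F_{\alpha_2}$ on balls of radius $R^{1/2}$ (the natural uncertainty scale for $\tau$'s), multilinear Kakeya produces an $L^6$-orthogonality-type bound on each ball. Summing over balls and interpolating with a trivial $L^2$-bound gives a recursion $M(R) \lesssim_\eps R^\eps M(R^{1/2})$ (or equivalently $D(R)\lesssim_\eps R^\eps D(R^{1/2})$) after applying the rescaling from Step 1 inside each $R^{1/2}$-ball.

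\textbf{Step 4: Closing the induction.} Starting from the trivial bound $D(R) \le R^{O(1)}$, iterating the recursion $O(\log\log R)$ times drives the exponent below any given $\eps > 0$, yielding $D(R)\lesssim_\eps R^\eps$. The hardest step is Step 3: controlling the constants in the transition from multilinear Kakeya on $R^{1/2}$-balls to a global bound while keeping the gain geometric in $R$. Care is also needed because multilinear Kakeya gives $L^3\to L^3$ rather than $L^2\to L^6$ bounds, so one must localize using wave packets at the critical scale $R^{1/2}$, where each $F_\tau$ looks like a sum of plane-wave modulated bumps of spatial scale $R^{1/2}$, and combine locally constant behaviour with orthogonality. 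This is the conceptual heart of the Bourgain--Demeter theorem and where the $R^\eps$ loss (rather than a clean constant) is forced by the induction.
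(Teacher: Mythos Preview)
The paper does not prove Theorem~\ref{bourdem} at all: it is quoted as a black box from Bourgain--Demeter \cite{BD} and then applied inside the proof of Lemma~\ref{deconQj}. So there is no ``paper's own proof'' to compare against; the authors simply import the result.

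Your sketch is a reasonable outline of the actual Bourgain--Demeter argument, but a couple of points are garbled and would not go through as written. In Step~3 you invoke multilinear Kakeya on $\ZR^3$ with an $L^3$ exponent; however, the theorem here concerns the parabola in $\ZR^2$, so the relevant multilinear object is \emph{bilinear} Kakeya in the plane, which is essentially trivial (two transverse $R^{1/2}$-tubes meet in a set of area $\sim R$). The nontrivial input in the 2D case is closer to the C\'ordoba $L^4$ square-function argument or a bilinear restriction estimate, not the full BCT machinery. Second, the recursion you write, $D(R)\lesssim_\eps R^\eps D(R^{1/2})$, does not by itself give $D(R)\lesssim_\eps R^\eps$: iterating it only reproduces a bound of the same shape. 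What Bourgain--Demeter actually obtain is a relation of the type $D(R)^p \le C_\eps R^\eps D(R)^{p-\kappa}$ for some $\kappa>0$ coming from interpolation between the multilinear gain and a trivial bound, and it is this self-improving inequality that forces $D(R)\lesssim_\eps R^\eps$. Your Step~4 iteration needs to be rewritten along those lines to close.
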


If $E \ge R^{1/4}$ (or any fixed power of $R$), then the estimate (\ref{linref}) is trivial because of the factor $E^{O(1)}$.  So we assume that $E \le R^{1/4}$.  

To set up the argument, we decompose $f$ as follows.  We break the unit ball $B^2(1)$ in frequency space into
small balls $\tau$ of radius $R^{-1/4}$, and divide the physical space ball $B^2(R)$ into balls $B$ of
radius $R^{3/4}$.  For each pair $(\tau, B)$, we let $f_{\Box_{\tau,B}}$ be the function formed by cutting
 off $f$ on the ball $B$ (with a Schwartz tail) in physical space and the ball $\tau$ in Fourier space.
We note that $e^{it\Delta} f_{\Box_{\tau,B}}$, restricted to $B^3(R)$, is essentially supported on an
 $R^{3/4} \times R^{3/4} \times R$-box, which we denote by $\Box_{\tau,B}$ (compare the discussion in Section \ref{WPD}). The box $\Box_{\tau,B}$ is in the direction given by $(-2c(\tau),1)$ and intersects ${t=0}$ at a disk centered at $(c(B),0)$, where $c(\tau)$
and $c(B)$ are the centers of $\tau$ and $B$ respectively. For a fixed $\tau$, the different boxes $\Box_{\tau, B}$
tile $B^3(R)$.  In particular, for each $\tau$, a given cube $Q_j$ lies in exactly one box $\Box_{\tau, B}$.

Since $f$ is concentrated in wave packets from $\bT_Z(E)$, we only need to consider those $R^{1/2}$-cubes $Q_j$ that are contained in the $E R^{1/2}$-neighborhood of $Z$. For each such $R^{1/2}$-cube $Q_j$, we will see that the wave packets that pass through $Q_j$ are nearly coplanar.
Because of this, we will be able to apply the 2-dimensional decoupling theorem to study $e^{i t \Delta} f$ on $Q_j$:

\begin{lemma} \label{deconQj} Suppose that $f$ has Fourier support in $B^2(0,1)$ and is
concentrated in wave packets from $\bT_Z(E)$, where $E \le R^{1/4}$ and $Z=Z(P)$ is a finite union of non-singular varieties.  Suppose that an $R^{1/2}$-cube $Q$ is in $N_{E R^{1/2}}(Z)$.  Then we have the decoupling bound

\begin{equation}
\| e^{i t \Delta} f \|_{L^6(Q)} \lessapprox \left( \sum_\Box \| e^{ i t \Delta} f_\Box \|_{L^6(10Q)}^2 \right)^{1/2} + R^{-1000} \| f \|_{L^2}.
\end{equation}
\end{lemma}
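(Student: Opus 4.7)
The plan is to localize $e^{it\Delta}f$ to $10Q$ by a smooth bump, identify the essential Fourier support of the localized function as a thin neighborhood of a parabola inside a single affine $2$-plane of $\ZR^3$, and then invoke the Bourgain-Demeter $l^2$-decoupling theorem on that parabola. Concretely, I would choose a Schwartz bump $\eta_Q \ge \hichi_Q$ with $\eta_Q$ supported essentially on $10Q$ and $\widehat{\eta_Q}$ in a ball of radius $\sim R^{-1/2}$, set $F := \eta_Q\cdot e^{it\Delta}f$, and bound $\|e^{it\Delta}f\|_{L^6(Q)} \le \|F\|_{L^6(\ZR^3)}$. Using the concentration hypothesis and the rapid off-tube decay of wave packets, I would replace $f$ by the sum of $f_{\theta,\nu}$ over those $(\theta,\nu) \in \bT_Z(E)$ with $T_{\theta,\nu}$ meeting $10Q$, at the cost of the additive $R^{-1000}\|f\|_{L^2}$ term in the claim.

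The key geometric step is to show that after this restriction, the centers $c(\theta)$ are confined to the $O(ER^{-1/2})$-neighborhood of a single affine line $\ell \subset \ZR^2$. By the tangency condition, each direction $G_0(\theta)=(-2c(\theta),1)$ makes angle at most $E R^{-1/2}$ with the tangent plane to $Z$ at some non-singular point near $10Q$; since the non-singular components of $Z$ are smooth surfaces with controlled curvature at the scale of $Q$, all these tangent planes agree with a single reference plane $V_Q$ up to a small angular error, yielding a condition of the form $a\cdot c(\theta) = b + O(ER^{-1/2})$ for fixed $(a,b)$ determined by $V_Q$. Lifting to the paraboloid confines the essential Fourier support of $F$ to an $O(ER^{-1/2})\times O(R^{-1/2})$-tube around the curve $\gamma := \{(\xi,|\xi|^2) : \xi \in \ell\}$, which is a parabola inside a single affine $2$-plane $V' \subset \ZR^3$; since $E \le R^{1/4}$, this Fourier support has thickness at most $O(R^{-1/4})$ in the $1$-dimensional direction $(V')^\perp$ orthogonal to $V'$ within $\ZR^3$.

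With this setup, I would apply Theorem \ref{bourdem} on the parabola $\gamma$ inside $V'$ at scale $R^{1/2}$, decoupling into $\sim R^{1/4}$ caps of size $R^{-1/4} \times R^{-1/2}$; each such cap is the image on the paraboloid of one of the $R^{-1/4}$-balls $\tau$ in $\ZR^2$. Extending the resulting 2D inequality to the full 3D inequality on $\ZR^3$ is then a matter of Fubini/Minkowski in the $(V')^\perp$-direction (legitimate because the Fourier thickness there is at most one cap scale), yielding $\|F\|_{L^6(\ZR^3)} \lessapprox (\sum_\tau \|F_\tau\|_{L^6(\ZR^3)}^2)^{1/2}$ with $F_\tau$ the piece of $F$ at frequencies in $\tau$. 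Identifying each $F_\tau$, up to RapDec, with a sum of $O(1)$ terms $\hichi_{10Q}\cdot e^{it\Delta}f_{\Box_{\tau,B}}$ (only those $B$'s whose physical slab meets $10Q$ contribute), and absorbing the $E^{O(1)} = R^{O(\epsilon^2)}$ losses into $\lessapprox$, gives the claimed decoupling inequality.

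The main obstacle is the geometric step of the second paragraph: showing that the $c(\theta)$'s lie in the neighborhood of a \emph{single} line $\ell$. This hinges on controlling how much the tangent plane of $Z$ can vary across $10Q$, which reduces to a curvature estimate for the non-singular components of $Z$; in the ambient application, where $Z=Z(P)$ with $\deg P\le D=R^{\epsilon^4}$, the variation is at most $R^{O(\epsilon^4)-1/2}$, comfortably absorbed into the tolerance $ER^{-1/2}$. Once this pinning down is done, the rest of the argument is bookkeeping of wave packets against the $\Box_{\tau,B}$-decomposition plus a black-box invocation of Theorem \ref{bourdem}.
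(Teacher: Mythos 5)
Your outline (localize to $10Q$, identify the Fourier support of the localized solution as a thin slab over a strip, apply 2D decoupling plus Fubini/Minkowski) is the right skeleton and matches the paper. However, there is a genuine gap at the step you yourself flag as the main obstacle.

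The curvature estimate you invoke is not available. You claim that because $Z$ is a finite union of non-singular polynomial varieties of degree $\leq D=R^{\epsilon^4}$, the tangent planes to $Z$ across $10Q$ agree up to $R^{O(\epsilon^4)-1/2}$. That is false in general: a degree-$2$ surface such as $\{t = M x_1^2\}$ already has tangent plane rotating by $\sim 1$ across a cube of side $R^{1/2}$ as soon as $M \gtrsim R^{-1/2}$, and nothing in the hypotheses controls the coefficients of $P$. The paper sidesteps this entirely by using the quantifier built into the definition of $\bT_Z(E)$: the angle bound $\mathrm{Angle}(G_0(\theta),T_z[Z]) \leq ER^{-1/2}$ is required to hold at \emph{every} non-singular $z \in N_{2ER^{1/2}}(T_{\theta,\nu}) \cap 2\bB^*_R \cap Z$, not merely at some such $z$. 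Since $Q \subset N_{ER^{1/2}}Z$, one can pick a single non-singular point $z_0 \in Z \cap N_{ER^{1/2}}Q$; each tube $T_{\theta,\nu}$ meeting $Q$ then satisfies $z_0 \in N_{2ER^{1/2}}(T_{\theta,\nu})$, so the angle bound holds at this one fixed $z_0$ for all relevant $\theta$ simultaneously. This confines the $c(\theta)$'s to a strip of width $\sim ER^{-1/2}$ with no curvature input whatsoever, and is the step your argument is missing.

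A secondary issue: even after the strip is obtained, you project onto $V'$, the affine $2$-plane actually containing the lifted parabola $\gamma$. Projecting along $(V')^\perp = (1,0,0)$ smears the slab by $\sim |a_1| ER^{-1/2}$, which can be as large as $R^{-1/4}$ when $E\sim R^{1/4}$ and $a_1\sim 1$; that is one order too thick for Theorem \ref{bourdem} at scale $R^{1/2}$. The paper instead projects onto a plane $\Pi$ perpendicular to $v=(1,0,2a_1)$, the direction of first-order variation of the paraboloid transverse to the strip; this kills the linear term, leaving a residual of size $O(E^2R^{-1}+R^{-1/2})=O(R^{-1/2})$, which is exactly what $E\le R^{1/4}$ buys. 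Your remark that the thickness in $(V')^\perp$ is ``at most one cap scale'' does not fix this, because what matters for the Fubini step is the projection of the Fourier support onto the chosen $2$-plane, and that projection must lie in the $R^{-1/2}$-neighborhood of a parabola.
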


Remark: The $R^{-1000} \| f \|_{L^2}$ is a negligibly small term which covers minor contributions coming from the tails of the Fourier transforms of smooth functions.  We will neglect this term in the sequel.

\begin{proof}
Observe that $Q \subset N_{E R^{1/2}}Z$ implies that there exists a non-singular point $z_0 \in Z \cap N_{E R^{1/2}}Q$. Thus for each wave packet $T_{\theta,\nu}$ that intersects $Q$, we have $z_0 \in Z \cap N_{2 E R^{1/2}}( T_{\theta,\nu})$.  By the definition of $\bT_{Z}(E)$ we get the angle bound
\begin{equation} \label{eq:Q}
  \text{Angle}(G_0(\theta),T_{z_0}[Z(P)])\leq E R^{-1/2}\,.
\end{equation}

We recall from Section \ref{WPD} that $G_0(\theta) = (- 2 c(\theta), 1)$.  Suppose that $T_{z_0} Z$ is the plane given by $a_1 x_1 + a_2 x_2 + b t = 0$, with $a_1^2 + a_2^2 + b^2 = 1$.  The angle condition above restricts the location of $\theta$ as follows:

\begin{equation} \label{locationtheta} | -2 a \cdot c(\theta) + b | \lesssim E R^{-1/2}. \end{equation}

\noindent We note that each tube $T_{\theta, \nu}$ makes an angle $\gtrsim 1$ with the plane $t=0$, because $\theta \subset B(0,1)$.  We can assume that there are some tubes $T_{\theta, \nu}$ tangent to $T_{z_0} Z$, and so $|a| \gtrsim 1$.  Therefore, (\ref{locationtheta}) confines $\theta$ to a strip of width $\sim E R^{-1/2}$ inside of $B(0,1)$.  We denote this strip by $S \subset B(0,1)$.

Let $\bT_{Z, Q}(E)$ be the set of $(\theta, \nu)$ in $\bT_Z(E)$ for which each $T_{\theta,\nu}$ intersects $Q$.  For each $(\theta, \nu)$ in $\bT_{Z, Q}(E)$, $\theta$ obeys (\ref{locationtheta}), and so $\theta \subset S$.  Let $\eta$ be a smooth bump function which approximates $\hichi_{Q}$.  We note that $\eta e^{i t \Delta} f$ is essentially equal to

$$ \sum_{(\theta, \nu) \in \bT_{Z, Q}(E) } \eta e^{i t \Delta} f_{\theta, \nu}. $$

\noindent Therefore, the Fourier transform of the localized solution $\eta e^{i t \Delta} f$ is essentially supported in

\begin{equation} \label{foursuppj} S^* := \{ (\xi_1, \xi_2, \xi_3): (\xi_1, \xi_2) \in S \textrm{ and } | \xi_3 - \xi_1^2 - \xi_2^2 | \lesssim R^{-1/2} \}. \end{equation}

(The contribution of the not essential parts is covered by the negligible term $R^{-1000} \| f \|_{L^2}$ in the statement of the Lemma.)

After a rotation in the $(x_1,x_2)$-plane we can suppose that the strip $S$ is defined by

$$ a_1 \le \xi_1 \le a_1 + E R^{-1/2}, $$

\noindent for some $a_1 \in [-1, 1]$.  We note that at each point $(\xi_1, \xi_2) \in S$,

\begin{equation} \label{par1par} \partial_1 \left( \xi_1^2 + \xi_2^2 \right) = 2 a_1 + O( E R^{-1/2}). \end{equation}

Let $v$ be the vector

$$ v = (1, 0, 2 a_1). $$

\noindent Let $\Pi$ be a 2-plane perpendicular to $v$.  Because $E \le R^{1/4}$, we claim that the projection of $S^*$ onto $\Pi$ lies in the $\sim R^{-1/2}$-neighborhood of a parabola.  
We can see this as follows.  Let

$$ S^*_{core} := \{ (\xi_1, \xi_2, \xi_3): \xi_1 = a_1, |\xi_2| \le 1, \xi_3 = \xi_1^2 + \xi_2^2 \}. $$

\noindent The set $S^*_{core}$ is a parabola, and its projection onto $\Pi$ is also a parabola.  We claim that the projection of $S^*$ to $\Pi$ lies in the $\sim R^{-1/2}$-neighborhood of this parabola.  If $(\xi_1, \xi_2, \xi_3) \in S^*$, then (\ref{par1par}) tells us that 

$$ (\xi_1^2 + \xi_2^2) = a_1^2 + \xi_2^2 + 2 a_1 (\xi_1 - a_1) + O( E R^{-1/2} \cdot | \xi_1 - a_1 | ). $$

\noindent Therefore,

$$ (\xi_1, \xi_2, \xi_3) = (a_1, \xi_2, a_1^2 + \xi_2^2) + (\xi_1 - a_1) v + O(E R^{-1/2} |\xi_1 - a_1| + R^{-1/2}). $$

\noindent The first term on the right-hand side lies is $S^*_{core}$.  Since $\Pi$ is perpendicular to $v$, the projection to $\Pi$ kills the second term on the right-hand side.  So the distance from the projection of $\xi$ to the projection of $S^*_{core}$ is at most

$$E R^{-1/2} |\xi_1 - a_1| + R^{-1/2} \lesssim E^2 R^{-1} + R^{-1/2} \sim R^{-1/2}. $$

Therefore, if we restrict $\eta e^{i t \Delta} f$ to $\Pi$, the resulting 2-dimensional function has Fourier support in the $\sim R^{-1/2}$-neighborhood of a parabola.

We consider the decomposition $f = \sum_{(\tau, B): \Box_{\tau, B} \cap Q \not= \emptyset} f_{\Box_{\tau, B}}$.  If $\eit f_{\Box_{\tau,B}}$ contributes to $\| \eit f \|_{L^6(Q)}$, there must be a wave
packet $T_{\theta,\nu}$ that intersects the $R^{1/2}$-cube $Q$ with $\theta \subset \tau$, and so $\tau \cap S$ must be non-empty.  Also, for a given $\tau$, there is only one $B$ so that $\Box_{\tau, B} \cap Q$ is non-empty.  Also, the Fourier support of $\eta e^{i t \Delta} f_{\Box_{\tau, B}}$ lies in $S^* \cap (\tau \times \ZR)$, by the same argument we used above for $\eta e^{i t \Delta} f$.  The projection onto $\Pi$ of $S^* \cap (\tau \times \ZR)$ is an $R^{-1/4} \times R^{-1/2}$ rectangular box.  The union of these boxes over all $\tau$ intersecting $S$ is the $R^{-1/2}$-neighborhood of a parabola.  Therefore, we have the hypotheses to apply the 2-dimensional decoupling theorem, Theorem \ref{bourdem}, which gives:

$$ \| \eta e^{i t \Delta} f \|_{L^6(\Pi)} \lessapprox \left( \sum_\Box \| \eta e^{ i t \Delta} f_\Box \|_{L^6(\Pi)}^2 \right)^{1/2}.  $$

Now we integrate in the direction perpendicular to $\Pi$ and apply Fubini and Minkowski to get

$$ \| \eta e^{i t \Delta} f \|_{L^6(\ZR^3)} \lessapprox \left( \sum_\Box \| \eta e^{ i t \Delta} f_\Box \|_{L^6(\ZR^3)}^2 \right)^{1/2}.  $$

This implies the desired conclusion.
\end{proof}

Next, by induction on the radius $R$, we will show that each function $f_\Box$ obeys a version of Theorem \ref{refstrich}.  Here is the statement.  Suppose that $S_1, S_2, ...$ are $R^{1/2} \times R^{1/2} \times R^{3/4}$-tubes in $\Box$ (running parallel to the long axis of $\Box$), and that
$$ \| e^{i t \Delta} f_\Box \|_{L^6(S_j)} \textrm{ is essentially constant in $j$}. $$
Suppose that these tubes are arranged into $R^{3/4}$-strips running parallel to the short axes of
$\Box$ and that each such strip contains $\sim \sigma_\Box$ tubes $S_j$.  Let $Y_\Box$ denote $\cup_j S_j$.  Then

\begin{equation} \label{indpar} \| e^{i t \Delta} f_\Box \|_{L^6(Y_\Box)} \lessapprox  E^{O(1)} R^{-1/12} R^{-1/12} \sigma_\Box^{-1/3} \| f_\Box \|_{L^2}. \end{equation}

This inequality follows by doing a parabolic rescaling and then using Theorem \ref{refstrich} at scale $R^{1/2}$, which we can assume holds by induction on $R$.  We write down the details of this parabolic rescaling, and in particular we will check that the tangent-to-variety condition is preserved under parabolic rescaling.
For each $R^{-1/4}$-ball $\tau$ in $B^2(1)$, we write $\xi=\xi_0+R^{-1/4}\zeta \in \tau$, then
$$|\eit f_\tau (x)| =R^{-1/4} |e^{i\tilde t \Delta} g (\tilde x)|$$
for some function $g$ with Fourier support in $B^2(1)$ and $\|g\|_2=\|f_\tau\|_2$, where the new coordinates $(\tilde x,\tilde t)$ are related to the old coordinates $(x,t)$ by
\begin{equation} \label{coord}
\begin{cases}
   \tilde x =R^{-1/4} x + 2 t R^{-1/4} \xi_0\,, \\
   \tilde t = R^{-1/2} t \,.
\end{cases}
\end{equation}
Therefore
$$\|\eit f_\Box (x)\|_{L^6(Y_\Box)} = R^{-1/12} \|e^{i\tilde t \Delta} g(\tilde x)\|_{L^6(\tilde Y)},$$
where $\tilde Y$ is the image of $Y_\Box$ under the new coordinates. Note that $\tilde Y$ is a union of $R^{1/4}$-cubes inside an $R^{1/2}$-cube. These $R^{1/4}$-cubes are arranged in $R^{1/4}$-horizontal strips, and each strip contains $\sim \sigma_\Box$ $R^{1/4}$-cubes. Moreover, by the relation \eqref{coord}, we see that each wave packet $T$, at scale $R$, of dimensions $R^{1/2+\delta}\times R^{1/2+\delta} \times R$ in the old coordinates is mapped to a corresponding wave packet $\tilde T$, at scale $R^{1/2}$, of dimensions $R^{1/4+\delta} \times R^{1/4+\delta} \times R^{1/2}$ in the new coordinates. The variety $Z(P)$ corresponds to a new variety $Z(Q)$,
given by the relation $Q(\tilde x,\tilde t) = Q(R^{-1/4} x + 2 t R^{-1/4} \xi_0, R^{-1/2} t) = P(x,t)$.
We claim that, under the above correspondence,
if the wave packet $T$ at scale $R$ is $E R^{-1/2}$-tangent to $Z(P)$, then the wave packet $\tilde T$ at scale
$R^{1/2}$ is $E R^{-1/4}$-tangent to $Z(Q)$ in the new coordinates.

By the relation \eqref{coord}, the distance condition $T \subset N_{E R^{1/2}}Z(P) $ implies that
$\tilde T \subset N_{E R^{1/4}}Z(Q)$.
Given the direction $(-2\xi,1)$ of $T$, the angle condition $$\text{Angle}((-2\xi,1),T_{z_0}[Z(P)])\leq E R^{-1/2}$$ is equivalent to
\begin{equation}\label{angle} \frac{|(-2\xi,1)\cdot (P_x (x_0,t_0), P_t (x_0,t_0))|}{|(P_x (x_0,t_0), P_t (x_0,t_0))|} \lesssim E R^{-1/2},\end{equation}
where $z_0=(x_0,t_0)$. Note that the direction of the corresponding wave packet $\tilde T$ is given by $(-2\zeta,1)$, where $\xi$ and $\zeta$ are related by $\xi=\xi_0+R^{-1/4}\zeta$. Let $\tilde z_0=(\tilde x_0,\tilde t_0)$ denote the point corresponding to $z_0$. Using the relations $$P_x = R^{-1/4}Q_{\tilde x} , \quad P_t = 2R^{-1/4} \xi_0 \cdot Q_{\tilde x} + R^{-1/2} Q_{\tilde t}\,, $$
after some computation, \eqref{angle} yields that
$$ \frac{|(-2\zeta,1)\cdot (Q_{\tilde x} (\tilde x_0,\tilde t_0), Q_{\tilde t} (\tilde x_0,\tilde t_0))|}{|(Q_{\tilde x} (\tilde x_0,\tilde t_0), Q_{\tilde t} (\tilde x_0,\tilde t_0))|} \lesssim E R^{-1/4},$$
which implies that $$\text{Angle}((-2\zeta,1),\tilde T_{\tilde z_0}[Z(Q)])\leq E R^{-1/4}.$$
Therefore the tangent-to-variety condition is preserved under parabolic rescaling and the induction on radius is justified.

We have now established inequality (\ref{indpar}).  To apply this inequality, we need to identify a good choice of $Y_\Box$.  We do this by some dyadic pigeonholing.  For each $\Box$, we apply the following algorithm to regroup tubes in $\Box$.

\begin{enumerate}

\item We sort those $R^{1/2} \times R^{1/2} \times R^{3/4}$-tubes $S$'s contained in the box $\Box$
according to the order of magnitude of $\| \eit f_\Box \|_{L^6(S)}$, which we denote $\lambda$.
For each dyadic number $\lambda$, we use $\mathbb{S}_\lambda$ to stand for the collection of tubes
$S \subset \Box$ with $\| \eit f_\Box \|_{L^6(S)} \sim \lambda$.

\item For each $\lambda$, we sort the tubes $S \in \mathbb{S}_\lambda$ by looking at the number of such
 tubes in an $R^{3/4}$-strip.  For any dyadic number $\eta$, we let $\mathbb{S}_{\lambda, \eta}$ be the set of tubes $S \in \mathbb{S}_\lambda$ so that the number of tubes of $\mathbb{S}_\lambda$ in the $R^{3/4}$-strip containing $S$ is $\sim \eta$.

\end{enumerate}

\begin{figure}  [ht]
\centering
\includegraphics[scale=.3]{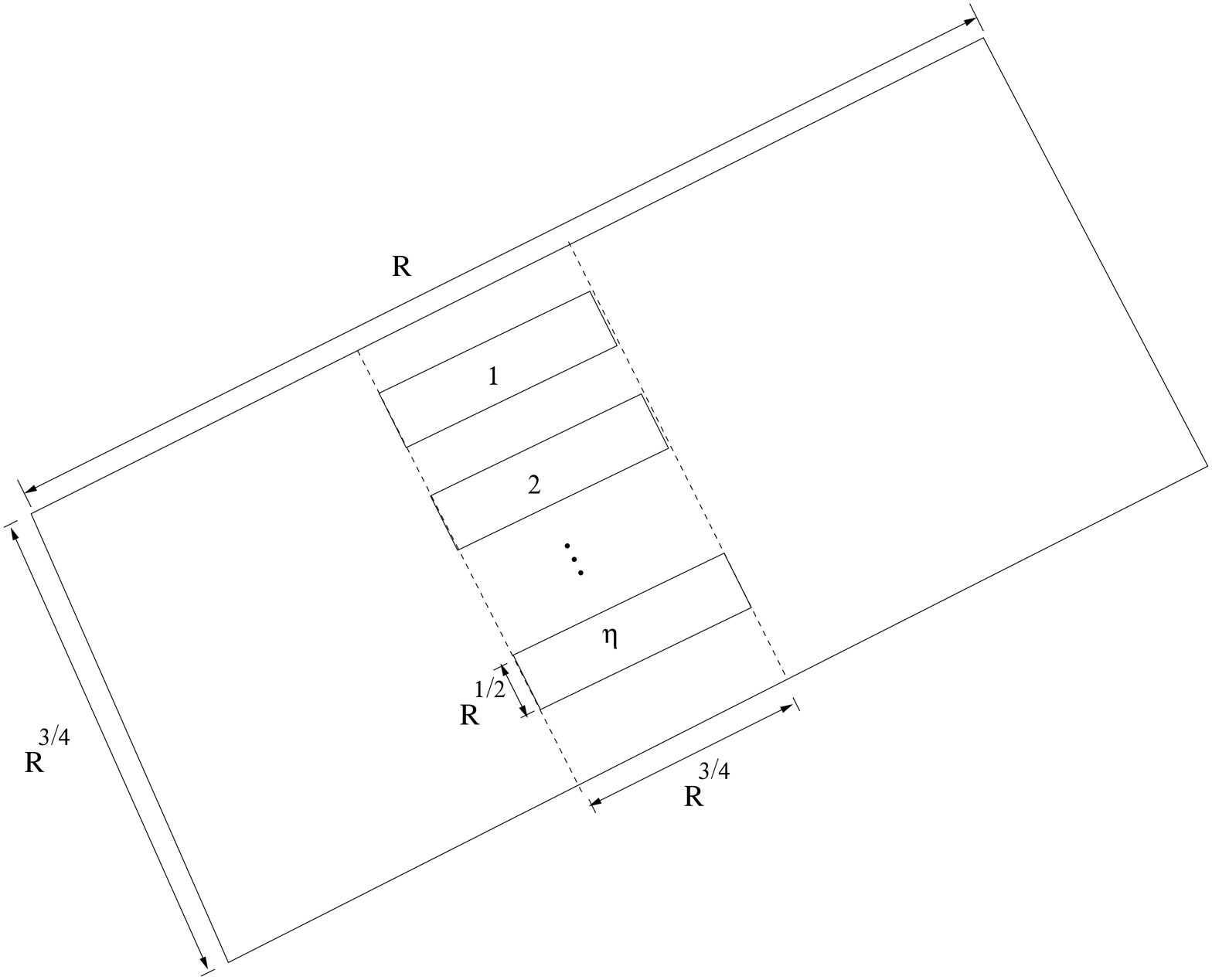}
\caption{\small{ Tubes in a given strip in the $\Box$  }}
\label{figure:Fig-box}
\end{figure}

Let $Y_{\Box, \lambda, \eta}$ be the union of the tubes in $\mathbb{S}_{\lambda, \eta}$. %and
%we use $\Id_{S}$ to denote the characteristic function of any measurable set $S$.
Then we represent
$$ \eit f = \sum_{\lambda, \eta} \left( \sum_\Box \eit f_\Box \cdot \hichi_{Y_{\Box, \lambda, \eta}} \right). $$

\noindent Note that $\| \eit f_\Box \|_{L^6(S)}\leq R^{O(1)}\|f\|_2$, for each tube $S$ as above
and the number of $\Box$'s does not exceed $R^{O(1)}$. 
We see that the contribution from those $\lambda$'s with $\lambda\leq R^{-C}\|f\|_2$ 
is at most $R^{-C/2}\|f\|_2$. Here the constant $C$ can be selected to be sufficiently large
so that $R^{-C/2}\|f\|_2$ is negligible. 
So without loss of generality, we can assume that the terms with small $\lambda$ contribute
insignificantly to $\| \eit f \|_{L^6(Q_j)}$ for every $Q_j$. 
Therefore there are only $O(\log R)$ significant choices for each of $\lambda, \eta$.  By pigeonholing,
we can choose $\lambda, \eta$ so that
\begin{equation} \label{dyadpig1}
\| \eit f \|_{L^6(Q_j)} \lesssim (\log R)^2 \big \| \sum_\Box \eit f_\Box \cdot\hichi_{Y_{\Box, \lambda, \eta}}\big \|_{L^6(Q_j)}
 \end{equation}
holds for a fraction $\approx 1$ of all cubes $Q_j$ in $Y$.
We need this uniform choice of $(\lambda, \eta)$, which is independent of $Q_j$,  because later we will sum over all $Q_j$ and arrive at $\|\eit f_\Box\|_{L^6(Y_{\Box,\lambda, \eta})}$. \\

We fix $\lambda$ and $\eta$ for the rest of the proof.
Let $Y_\Box$ stand for the abbreviation of $Y_{\Box, \lambda, \eta}$.
We note that $Y_\Box$ obeys the hypotheses for our inductive estimate (\ref{indpar}), with
 $\sigma_\Box$ being the value of $\eta$ that we have fixed.

The following geometric estimate will play a crucial role in our proof.  Each set $Y_\Box$ contains $\lesssim \sigma_\Box$ tubes in each strip parallel to the short axes of $\Box$.  Since the angle between the short axes of $\Box$ and the $x$-axes is bounded away from $\pi/2$, it follows that $Y_\Box$ contains $\lesssim \sigma_\Box$ cubes $Q_j$ in any $R^{1/2}$-horizontal row. Therefore,

\begin{equation} \label{geom1}  | Y_\Box \cap Y | \lesssim \frac{\sigma_\Box}{\sigma} |Y|. \end{equation}

Next we sort the the boxes $\Box$ according to the dyadic size of $\| f_\Box \|_{L^2}$.  We can restrict matters
to $\lesssim \log R$ choices of this dyadic size, and so we can choose a set of $\Box$'s, $\mathbb{B}$,
so that $\| f_\Box \|_{L^2}$ is essentially constant for $\Box \in \mathbb{B}$ and

\begin{equation} \label{dyadpig2}
\| \eit f \|_{L^6(Q_j)} \lessapprox  \| \sum_{\Box \in \mathbb{B}} \eit f_\Box\cdot \hichi_{Y_{\Box}} \|_{L^6(Q_j)}
\end{equation}
for a fraction $\approx 1$ of cubes $Q_j$ in $Y$.

Finally we sort the cubes $Q_j \subset Y$ according to the number of $Y_\Box$ that contain them.
 We let $Y' \subset Y$ be a set of cubes $Q_j$ which obey (\ref{dyadpig2}) and which each lies in $\sim \mu$ of
the sets $\{ Y_\Box \}_{\Box \in \mathbb{B}}$.  Because (\ref{dyadpig2}) holds for a large fraction of cubes,
and because there are only dyadically many choices of $\mu$, $|Y'| \approx |Y|$.  By the equation (\ref{geom1}), we
see that
$$  | Y_\Box \cap Y' | \le |Y_\Box \cap Y| \lessapprox  \frac{\sigma_\Box}{\sigma} |Y| \approx \frac{\sigma_\Box}{\sigma} |Y'|. $$
Therefore, the multiplicity $\mu$ is bounded by
\begin{equation}\label{geom2}
\mu \lessapprox \frac{\sigma_\Box}{\sigma} | \mathbb{B} |.
\end{equation}

We now are ready to combine all our ingredients and finish our proof.  For each $Q_j \subset Y'$, we have
$$ \| \eit f \|_{L^6(Q_j)} \lessapprox \left\| \sum_{\Box \in \mathbb{B}} \eit f_\Box \cdot\hichi_{Y_\Box} \right\|_{L^6(Q_j)}.  $$

Now we apply Lemma \ref{deconQj} to the function $\sum_{\Box \in \mathbb{B}, Q_j \in Y_\Box} f_\Box$ to bound the right hand side by

$$   \lessapprox \left(   \sum_{\Box \in \mathbb{B}, Q_j \subset Y_\Box} \left\| \eit f_\Box \right\|_{L^6(Q_j)}^2 \right)^{1/2}. $$

Since the number of $Y_\Box$ containing $Q_j$ is $\sim \mu$, we can apply H\"older to get
  $$  \left\| \sum_{\Box \in \mathbb{B}} \eit f_\Box \cdot \hichi_{Y_\Box} \right\|_{L^6(Q_j)} \lessapprox \mu^{1/3}
\left(   \sum_{\Box \in \mathbb{B}, Q_j \subset Y_\Box} \left\| \eit f_\Box \right\|_{L^6(Q_j)}^6 \right)^{1/6}. $$
Now we raise to the sixth power and sum over $Q_j \subset Y'$ to get
$$  \left\| \eit f \right\|_{L^6(Y')}^6 \lessapprox \mu^2 \sum_{\Box \in \mathbb{B}}  \left\| \eit f_\Box
\right\|_{L^6(Y_\Box)}^6. $$
Since $|Y'| \gtrapprox |Y|$, and since each cube $Q_j \subset Y$ makes an equal contribution to $\| \eit f \|_{L^6(Y)}$, we see that
$\| \eit f \|_{L^6(Y)} \approx \| \eit f \|_{L^6(Y')}$ and so
$$  \left\| \eit f \right\|_{L^6(Y)}^6 \lessapprox  \mu^2 \sum_{\Box \in \mathbb{B}}
\left\| \eit f_\Box \right\|_{L^6(Y_\Box)}^6. $$

By a parabolic rescaling, Figure \ref{figure:Fig-box} becomes Figure \ref{figure:Fig-cubes}.
\begin{figure}  [ht]
\centering
\includegraphics[scale=.4]{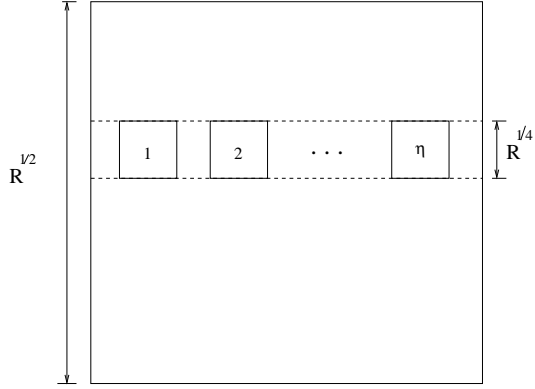}
\caption{\small{ Cubes in a given strip in an $R^{1/2}$-cube }}
\label{figure:Fig-cubes}
\end{figure}
Henceforth, applying our inductive hypothesis (\ref{indpar}) at scale $R^{1/2}$ to the right-hand side, we see that
\begin{equation} \label{forthm2}
 \left\| \eit f \right\|_{L^6(Y)}^6  \lessapprox E^{O(1)} R^{-1} \mu^2 \sigma_\Box^{-2} \sum_{\Box \in \mathbb{B}}
\| f_\Box \|_{L^2}^6.
\end{equation}
Plugging in our bound for $\mu$ in (\ref{geom2}), this is bounded by
$$  \lesssim E^{O(1)} R^{-1} \sigma^{-2} | \mathbb{B}|^2 \sum_{\Box \in \mathbb{B}} \| f_\Box \|_{L^2}^6. $$
Now since $\| f_\Box \|_{L^2}$ is essentially constant among all $\Box \in \mathbb{B}$, the last expression is
 $$ \sim E^{O(1)} R^{-1} \sigma^{-2} (\sum_{\Box \in \mathbb{B}} \| f_\Box \|_{L^2}^2)^3 \le E^{O(1)} R^{-1} \sigma^{-2}
 \| f \|_{L^2}^6. $$
 
\noindent Taking the sixth root, we obtain our desired bound: 
$$ \| \eit f \|_{L^6(Y)} \lessapprox E^{O(1)} R^{-1/6} \sigma^{-1/3} \| f \|_{L^2}. $$
This closes the induction on radius and completes the proof.

\subsection{Proof of Theorem {\ref{bilrefstrich}}}
It can be proved by the method used in the proof of Theorem \ref{refstrich}.
 By H\"older,
$$ \left\| \left|  \eit f_1 \eit f_2  \right|^{1/2}  \right\|_{L^6(Y)} \le \prod_{i=1}^2
\left\| \eit f_i  \right\|_{L^6(Y)}^{1/2}. $$
For each $i$, we process $\| \eit f_i \|_{L^6(Y)}$ following the proof of Theorem \ref{refstrich}.  We decompose
$f_i = \sum_\Box f_{i, \Box}$, and we follow the proof of Theorem \ref{refstrich}.   We define $Y_{i, \Box}$ by dyadic pigeonholing, so that $Y_{i, \Box}$ is arranged in several $R^{3/4}$-strips (running parallel to the short axes of $\Box$) with $\sim \sigma_{i, \Box}$ $R^{1/2} \times R^{1/2} \times R^{3/4}$-tubes in each strip.  When we use dyadic pigeonholing to pick a subset of cubes $Q_j \subset Y$, we pigeonhole for $f_1$ and $f_2$ simultaneously, and so we pick out a set of cubes that works well for both functions.
Following the argument up to Equation (\ref{dyadpig2}), we see that for a fraction $\approx 1$ of cubes $Q_j$,
\begin{equation} \label{dyadpig2'}
\| \eit f_i \|_{L^6(Q_j)} \lessapprox  \| \sum_{\Box \in \mathbb{B}_i} \eit f_{i, \Box} \cdot \hichi_{Y_{i, \Box}} \|_{L^6(Q_j)} \textrm{ for } i = 1, 2.
\end{equation}

Similarly, we sort the cubes $Q_j \subset Y$ according to the number of $Y_{i, \Box}$ that contain them.
 We let $Y' \subset Y$ be a set of cubes $Q_j$ which obey (\ref{dyadpig2'}) and which each lies in $\sim \mu_1$ of
the sets $\{ Y_{1, \Box} \}_{\Box \in \mathbb{B}_1}$ and $\sim \mu_2$ of the sets $\{ Y_{2, \Box} \}_{\Box \in \mathbb{B}_2}$.  Because (\ref{dyadpig2}) holds for a large fraction of cubes,
and because there are only dyadically many choices of $\mu_1, \mu_2$, $|Y'| \approx |Y|$.  Following the proof of Theorem \ref{refstrich} further, up to Equation \eqref{forthm2}, we see that for each $i$,

\begin{equation} \label{forthm2'}
 \left\| \eit f_i \right\|_{L^6(Y)}  \lessapprox E^{O(1)} R^{-1/6} \left[ \mu_i^2 \sigma^{-2}_{i, \Box}
\sum_{\Box \in \mathbb{B}_i} \| f_{i, \Box} \|_{L^2}^6 \right]^{1/6}.
\end{equation}

Finally, we give a geometric estimate for $\mu_1$ and $\mu_2$ that takes advantage of the bilinear structure.  If $\Box_1 \in \mathbb{B}_1$ and $\Box_2 \in \mathbb{B}_2$, then the
angle between their long axes is $\sim 1$.  Therefore, their intersection is contained in a ball of radius
$\sim R^{3/4}$, and so $Y_{\Box_1} \cap Y_{\Box_2}$ contains $\lesssim \sigma_{1, \Box} \sigma_{2, \Box}$
different $R^{1/2}$-balls (see Figure {\ref{figure:Fig-trans}}).
\begin{figure}  [ht]
\centering
\includegraphics[scale=.5]{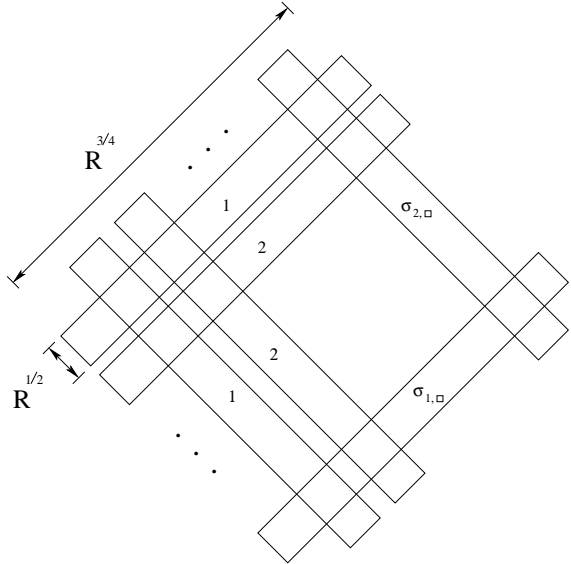}
\caption{\small{ at most $O(\sigma_{1, \Box}\sigma_{2, \Box})$ cubes created by two transversal families of
  rectangular boxes  }}
\label{figure:Fig-trans}
\end{figure}
For each of the $\approx N$ cubes $Q_j$ in $Y'$, for each $i$, the cube $Q_j$ lies
in $\sim \mu_i$ of the sets $\{ Y_{\Box_i} \}_{\Box_i \in \mathbb{B}_i}$.  Therefore,
\begin{equation} \label{geombil}
 N \prod_{i=1}^2 \mu_i \lessapprox \prod_{i=1}^2\sigma_{i, \Box}  | \mathbb{B}_i | .
\end{equation}
Starting with (\ref{forthm2'}) and inserting this estimate, we see that
$$\prod_{i=1}^2 \left\| \eit f_i  \right\|_{L^6(Y)}^{1/2} \lessapprox E^{O(1)} R^{-1/6}\prod_{i=1}^2 \left[ \mu_i^2 \sigma_{i, \Box}^{-2}
\sum_{\Box \in \mathbb{B}_i} \| f_{i, \Box} \|_{L^2}^6 \right]^{\frac{1}{6} \cdot \frac{1}{2} } $$
$$ \lessapprox E^{O(1)} R^{-1/6} \prod_{i=1}^2 \left[ N^{-1} | \mathbb{B}_i|^2  \sum_{\Box \in \mathbb{B}_i}
\| f_{i, \Box} \|_{L^2}^6 \right]^{\frac{1}{6} \cdot \frac{1}{2} } \lesssim E^{O(1)} R^{-1/6} N^{-1/6} \prod_{i=1}^2
\| f_i \|_{L^2}^{1/2}\,, $$
as desired.

\section{Bilinear maximal  estimate with small separation } \label{2maxsml}
\setcounter{equation}0

In this section, using Theorem \ref{bilrefstrich}  and parabolic rescaling, we prove the following proposition,
 which implies Proposition \ref{Bil-prop}.

 \begin{proposition} \label{2maxvar} 
 Suppose that $\xi_0 \in B^2(0,1)$ and that $f_i$ have Fourier supports in $B(\xi_0, 1/M)$ for some $ M \ge 1$.  Also suppose that the Fourier supports of $f_i$ are separated by at least $1/(KM)$, where $K = K(\eps)$ is a large constant.  Suppose that each $f_i$ is concentrated in wave packets from $\bT_Z(E)$, where $E \ge R^\delta$ and $Z=Z(P)$ and $P$ is a product of distinct non-singular polynomials. 
Then
\begin{equation} \label{2Lpmax}
\left\| |e^{i t \Delta} f_1|^{1/2}|e^{i t \Delta} f_2|^{1/2} \right\|_{L^3_x(B_R) L^\infty_t(0,R)} \lessapprox E^{O(1)} \| f_1 \|_{L^2}^{1/2}\| f_2 \|_{L^2}^{1/2}.
\end{equation}
\end{proposition}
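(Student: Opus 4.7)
The plan is to combine parabolic rescaling, a locally constant property of the wave packet decomposition at scale $R^{1/2}$, and the bilinear refined Strichartz estimate of Theorem \ref{bilrefstrich}. First I would parabolically rescale around $\xi_0$ by the factor $M$, replacing $f_i$ with functions $g_i$ that have Fourier supports in $B^2(0,1)$ separated by $\sim 1/K$; the new spatial scale is $\tilde R = R/M^2$ and the tangent-to-$Z$ hypothesis with parameter $E$ is preserved, by the same rescaling computation used in the proof of Theorem \ref{refstrich}. Powers of $M$ introduced by the coordinate change and the $L^3_x L^\infty_t$ Jacobian are harmless (they work in our favor and are absorbed into $\lessapprox$), so we may assume $M \sim 1$ from here on.

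Next I would linearize the $L^\infty_t$ norm. For each $x \in B(0,R)$ pick $t(x) \in [0,R]$ essentially attaining $\lambda(x) := |e^{it\Delta} g_1(x)\, e^{it\Delta} g_2(x)|^{1/2}$, then dyadically pigeonhole to a single level $X_\lambda = \{x : \lambda(x) \sim \lambda\}$, losing only $O(\log R)$. Because each $g_i$ has Fourier support in $B^2(0,1)$, the wave packet description in Section \ref{WPD} shows $|e^{it\Delta} g_i|$ is essentially constant on any $R^{1/2}$-cube in spacetime. Hence for each $x \in X_\lambda$ I may choose an $R^{1/2}$-cube $Q(x) \ni (x,t(x))$ with
\[
\lambda^6 \lesssim \frac{1}{|Q(x)|}\int_{Q(x)} |e^{is\Delta} g_1 \cdot e^{is\Delta} g_2|^3\, dy\, ds.
\]
Let $\mathcal Q$ be the set of distinct cubes obtained and $N = |\mathcal Q|$. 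Since the spatial projection of an $R^{1/2}$-cube has area $\sim R$, we have $|X_\lambda| \lesssim NR$.

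A second dyadic pigeonhole restricts $\mathcal Q$ to a sub-collection on which $\| |e^{is\Delta} g_1\, e^{is\Delta} g_2|^{1/2}\|_{L^6(Q)}$ is essentially constant in $Q$. Writing $Y = \bigcup_{Q\in\mathcal Q} Q$ and summing the display above over $\mathcal Q$,
\[
\lambda^6 N R^{3/2} \lesssim \big\| |e^{is\Delta} g_1\, e^{is\Delta} g_2|^{1/2}\big\|_{L^6(Y)}^6.
\]
Now Theorem \ref{bilrefstrich} applies with this same $N$ and yields
\[
\big\| |e^{is\Delta} g_1\, e^{is\Delta} g_2|^{1/2}\big\|_{L^6(Y)}^6 \lessapprox E^{O(1)} R^{-1} N^{-1} \|g_1\|_2^3 \|g_2\|_2^3,
\]
so $\lambda \lessapprox E^{O(1)} R^{-5/12} N^{-1/3}\|g_1\|_2^{1/2}\|g_2\|_2^{1/2}$. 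Using $|X_\lambda| \lesssim NR$,
\[
|X_\lambda|^{1/3}\lambda \lessapprox (NR)^{1/3}\, E^{O(1)} R^{-5/12} N^{-1/3}\|g_1\|_2^{1/2}\|g_2\|_2^{1/2} = E^{O(1)} R^{-1/12}\|g_1\|_2^{1/2}\|g_2\|_2^{1/2},
\]
which is more than sufficient; summing over the $O(\log R)$ dyadic values of $\lambda$ and undoing the rescaling finishes the proof. The balancing of exponents reflects the identity $\tfrac13 = \tfrac16 + \tfrac16$, matching the linear $L^3$ exponent to the bilinear Strichartz exponent $L^6$.

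The main obstacle will be the simultaneous pigeonholing: the count $N$ that controls the covering of $X_\lambda$ via $|X_\lambda| \lesssim NR$ must be the same $N$ that appears in the $N^{-1/6}$ gain in Theorem \ref{bilrefstrich}. A cleanly organized two-stage dyadic pigeonhole (first in the value $\lambda$, then in the $L^6$-mass per cube) should align these counts at the cost of $(\log R)^{O(1)}$ factors. The other technical points, namely the rigorous formulation of the locally constant property via the Schwartz bumps $\chi^*_{T_{\theta,\nu}}$ from Section \ref{WPD} and the persistence of the tangent-to-variety condition under parabolic rescaling, are routine but require careful bookkeeping.
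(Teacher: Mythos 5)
Your overall strategy (linearize the sup, dyadically pigeonhole, covering plus locally constant property, then apply the bilinear refined Strichartz) does match the paper's proof in outline, but two of the key steps have genuine gaps, and one of them produces a visible symptom.

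The locally constant step is wrong at the stated scale. You assert that because $\widehat{g_i}$ is supported in $B^2(0,1)$, the quantity $|e^{it\Delta}g_i|$ is essentially constant on $R^{1/2}$-cubes, and you use this to write $\lambda^6 \lesssim |Q(x)|^{-1}\int_{Q(x)} |e^{is\Delta}g_1\, e^{is\Delta}g_2|^3$. This fails: the spacetime Fourier support of $e^{it\Delta}g_i$ restricted to $B^3(R)$ is the $R^{-1}$-neighborhood of the full cap over $B^2(0,1)$, a set of $O(1)$ diameter, so the locally constant scale is $O(1)$, not $R^{1/2}$. The genuinely small Fourier support used in the paper comes only after localizing to an $R^{1/2}$-cube $Q$ lying inside $N_{ER^{1/2}}Z$ and invoking the tangent-to-$Z$ hypothesis: then the Fourier support collapses to a box $A^*(Q)$ of dimensions $\sim ER^{-1/2}\times M^{-1}\times M^{-1}$, whose dual is the box $A_k(Q)$ of dimensions $M\times M\times E^{-1}R^{1/2}$. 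This is the correct locally constant scale, and it is manifestly not a cube. A numerical symptom of the error is your final bound $|X_\lambda|^{1/3}\lambda \lessapprox E^{O(1)}R^{-1/12}\|g_1\|_2^{1/2}\|g_2\|_2^{1/2}$, which would be strictly stronger than the proposition; replacing the $R^{1/2}$-cube average $|Q|^{-1}=R^{-3/2}$ with the dual-box average $|A_k(Q)|^{-1}\sim R^{-1/2}$ (for $M=1$, $E\sim 1$) removes exactly the spurious $R^{-1/12}$ and lands on the sharp bound.

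The reduction to $M\sim 1$ by parabolic rescaling also does not go through as stated. Under $y=x/M+2t\xi_0/M$, $r=t/M^2$, the domain $B^3(R)$ maps to a box of dimensions $\sim \frac{R}{M}\times\frac{R}{M}\times\frac{R}{M^2}$, not to $B^3(R/M^2)$, and each $R^{1/2}$-cube $Q_j$ maps to a box of dimensions $\sim \frac{\sqrt R}{M}\times\frac{\sqrt R}{M}\times\frac{\sqrt R}{M^2}$, which is not a cube at the natural rescaled scale $\sqrt R/M$. Theorem \ref{bilrefstrich} requires lattice $\rho^{1/2}$-cubes inside $B^3(\rho)$, so it cannot be invoked directly; the paper proves the needed generalization (Proposition \ref{bilrefstrichM}), which carries an extra $M^{1/6}$, by decomposing the rescaled box into $\frac{R}{M^2}$-balls, pigeonholing over those balls, and summing. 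That $M^{1/6}$ is then canceled by the $M^{-1/6}$ coming from the covering bound $|U|\lesssim M^{-1}|X|$, an effect that is invisible in your approach because you dropped the $M$-dependence at the outset. Finally, note that the shear mixes $x$ and $t$, so the $L^3_x L^\infty_t$ norm does not transform cleanly; the paper avoids this by linearizing first (defining $U$ and $H$ in the original coordinates) and rescaling only after the problem has been reduced to an unmixed $L^6$ estimate on a union of cubes.
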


\begin{proof} We can assume $M\ll R^{1/2}$, otherwise all wave packets were in the same direction and a direct computation would give us the desired result.

Since $f$ is concentrated in wave packets from $\bT_Z(E)$, we decompose $N_{E R^{1/2}}Z$ into balls $Q$ of
radius $R^{1/2}$.  Let $\eta$ be a smooth bump function approximating $\hichi_Q$.  As we saw in the proof of Lemma \ref{deconQj}, in Equation (\ref{foursuppj}), the Fourier support of each function $\eta e^{i t \Delta} f_i$ is essentially supported on
\begin{equation*} S^* := \{ (\xi_1, \xi_2, \xi_3): (\xi_1, \xi_2) \in S \textrm{ and } | \xi_3 - \xi_1^2 - \xi_2^2 | \lesssim R^{-1/2} \}, \end{equation*}

\noindent where $S \subset B(0,1)$ is a strip of width $E R^{-1/2}$.  Since the Fourier support of each $f_i$ is also contained in $B(\xi_0, 1/M)$, the Fourier support of $\eta e^{i t \Delta} f_i$ is also essentially contained in $B(\xi_0, \frac{2}{M}) \times \ZR$.  The intersection of $S^*$ with the cylinder $B(\xi_0, \frac{2}{M}) \times \ZR$ is contained in a rectangle of dimensions $\sim E R^{-1/2} \times 1/M \times 1/M$.  We denote this rectangle by $A^*(Q)$.  Since the Fourier support of each $\eta e^{i t \Delta} f_i$ is contained in $A^*(Q)$, 
$|\eta e^{i t \Delta} f_i|$ is morally constant on dual rectangles with dimensions $M \times M \times E^{-1} R^{1/2}$.  
We tile $Q$ with such dual rectangles, which we denote $A_k(Q)$.  The projection of each dual rectangle $A_k(Q)$ to the $x$-plane is an $M \times E^{-1} R^{1/2}$-rectangle.

Suppose that $\sup_{0 < t < R} | \eit f_1 \eit f_2|^{1/2} \sim H$ on a set $U \subset B(0,R)$.  It suffices for us to prove the bound
\begin{equation} \label{HU13}
 H |U|^{1/3} \lessapprox E^{O(1)} \| f_1 \|_{L^2}^{1/2} \| f_2 \|_{L^2}^{1/2}.
\end{equation}

We will bound $|U|$ using the rectangles $A_k(Q)$.  For the time being, let us suppose that $|\eta e^{i t \Delta} f_i|$ is roughly constant on each $A_k(Q)$.  This is not quite rigorous, but useful for intuition.  On the next page, we will come back to this point and give a rigorous argument.  

There must be a collection of dual rectangles $A_k(Q_j)$ whose projections cover $U$ and so that $| \eit f_1 \eit f_2|^{1/2} \sim H$ on each dual rectangle.  We let $X$ denote the union of these dual rectangles.  Each $M \times M \times E^{-1} R^{1/2}$ rectangle $A_k(Q_j) \subset X$ has a projection with area $M E^{-1} R^{1/2}$, and since these projections cover $U$, we have the bound

\begin{equation} \label{UvsX}
|U| \lesssim M^{-1} |X|. 
\end{equation}

We can also assume that no two rectangles $A_k(Q_j) \subset X$ have essentially the same projection.  This implies that $X$ contains $\lesssim E^{O(1)} R^{1/2} M^{-1}$ rectangles $A_k(Q)$ in each cube $Q$.  So for each cube $Q$, we get the bound

\begin{equation} \label{Xthin}
|X \cap Q| \lesssim E^{O(1)} M R.
\end{equation}

We consider the $R^{1/2}$-cubes $Q$ in $B^2(R) \times [0,R]$ that intersect $X$. We sort these $R^{1/2}$-cubes $Q$ according to the dyadic value of 
$\left\||\eit f_1|^{1/2}|\eit f_2|^{1/2}\right\|_{L^6(Q)}$. We can choose a set of of $R^{1/2}$-cubes $Q_j$, $j=1,2,\cdots, N$, so that
\begin{equation} \label{dya1}
\left\||\eit f_1|^{1/2}|\eit f_2|^{1/2}\right\|_{L^6(Q_j)} \text{is essentially constant in $j$},
\end{equation}
and $|X|\lessapprox |X\cap Y|$, where $Y:=\bigcup_{j=1}^{N} Q_j$.  Using the locally constant property that $|\eit f_1 \eit f_2|^{1/2} \sim H$ on each rectangle $A_k(Q_j) \subset X$, we see that 

\begin{equation} \label{locconst} H |X|^{1/6} \lessapprox E^{O(1)} \left\||\eit f_1|^{1/2}|\eit f_2|^{1/2}\right\|_{L^6(Y)}. \end{equation}

Since $|X \cap Q_j| \lesssim E^{O(1)} M R$ for each cube $Q_j$, $j=1, ... N$, we see that $|X| \lessapprox | X \cap Y| \lesssim E^{O(1)} M N R$.  Therefore,

\begin{equation}
H |X|^{1/3} \lessapprox E^{O(1)} M^{1/6} N^{1/6} R^{1/6}  \left\||\eit f_1|^{1/2}|\eit f_2|^{1/2}\right\|_{L^6(Y)}.
\end{equation}

Finally, since $|U| \lesssim M^{-1} |X|$, we have

\begin{equation}
H |U|^{1/3} \lessapprox E^{O(1)} M^{-1/6} N^{1/6} R^{1/6}  \left\||\eit f_1|^{1/2}|\eit f_2|^{1/2}\right\|_{L^6(Y)}.
\end{equation}

Therefore, our desired bound (\ref{HU13}) follows from a generalization of Theorem \ref{bilrefstrich}, which we now state.  

\begin{proposition} \label{bilrefstrichM}  Suppose that $f_1$ and $f_2$ are as in Proposition \ref{2maxvar}.  
Suppose that $Q_1, Q_2, \cdots, Q_N$ are lattice  $R^{1/2}$-cubes in $B^3(R)$ so that

\begin{equation} \label{dya2}
\left\||\eit f_1|^{1/2}|\eit f_2|^{1/2}\right\|_{L^6(Q_j)} \text{is essentially constant in $j$}.
\end{equation}

Let $Y$ denote $\bigcup_{j=1}^{N} Q_j$. Then
\begin{equation} \label{desiredbilinear}
 \left\||\eit f_1|^{1/2}|\eit f_2|^{1/2}\right\|_{L^6(Y)} \lessapprox E^{O(1)} M^{1/6} N^{-1/6} R^{-1/6} \| f_1 \|_{L^2}^{1/2} \| f_2 \|_{L^2}^{1/2}.  
\end{equation}
\end{proposition}

If $M=1$, then $f_1$ and $f_2$ have Fourier supports separated by $\sim 1$, and we can apply Theorem \ref{bilrefstrich}.  We first find $Y' \subset Y$ with $|Y'| \approx |Y|$ so that for each $i$, $\| \eit f_i \|_{L^6(Q_j)}$ is essentially constant among all $Q_j \subset Y'$.  Then we apply Theorem \ref{bilrefstrich} to $Y'$ to get (\ref{desiredbilinear}):

$$ \left\||\eit f_1|^{1/2}|\eit f_2|^{1/2}\right\|_{L^6(Y)} \approx $$

$$\approx \left\||\eit f_1|^{1/2}|\eit f_2|^{1/2}\right\|_{L^6(Y')} \lessapprox E^{O(1)} N^{-1/6} R^{-1/6} \| f_1 \|_{L^2}^{1/2} \| f_2 \|_{L^2}^{1/2}.  $$

For larger $M$, the Fourier supports of $f_1$ and $f_2$ are only separated by $\sim 1 / M$, and so we will need to apply parabolic rescaling before we can use Theorem \ref{bilrefstrich}.

Before we do this parabolic rescaling and prove Proposition \ref{bilrefstrichM}, let us return to the issue of $| e^{i t \Delta} f_i |$ being morally roughly constant on each rectangle $A_k(Q)$.  We used the locally constant property to justify (\ref{locconst}) above.  We can rigorously prove (\ref{locconst}) as follows.  We mentioned above that each function $\eta_Q e^{i t \Delta} f_i$ has Fourier transform essentially supported in a rectangle $A^*(Q)$ of dimensions $\sim E R^{-1/2} \times M^{-1} \times M^{-1}$.   So the Fourier transform of their product, $g := \eta_Q^2 e^{i t \Delta} f_1 e^{i t \Delta} f_2$, is essentially supported in a rectangle with the same orientation and roughly the same dimensions.  If $\hat \psi$ is designed to be identically 1 on this rectangle, then $g * \psi$ is essentially equal to $g$.  We can choose such a $\psi$ where $|\psi|$ is a rapidly-decaying approximation of $|A_k(Q_j)|^{-1} \hichi_{A_k(Q_j)}$.   Therefore, we see that 

\begin{equation} \label{supvsavg} \sup_{A_k(Q)} | \eit f_1 \eit f_2| \lesssim R^{O(\delta)} \frac{ \int_{R^\delta A_k(Q)}  | \eit f_1 \eit f_2| }{|A_k(Q_j)|} + R^{-1000} \| f_1 \|_{L^2} \| f_2 \|_{L^2}, \end{equation}

\noindent where the second term accounts for the tail of $\psi$.  Since $E \ge R^\delta$, we can assume that $R^\delta A_k(Q) \subset Q$.  

We let $X$ be a union of rectangles $A_k(Q_j)$ which each obeys 

$$H \lesssim \sup_{A_k(Q_j)} | \eit f_1 \eit f_2 |^{1/2}.$$

\noindent We can arrange that the projections of $10 A_k(Q_j)$ cover $U$ and also that any two rectangles $A_k(Q_j)$ in $X$ have essentially different projections.  Because of this covering, we still have $|U| \lesssim M^{-1} |X|$.  Now if $H \lesssim R^{-100} \| f_1 \|_{L^2}^{1/2} \| f_2 \|_{L^2}^{1/2}$, then (\ref{HU13}) follows trivially.  Therefore, (\ref{supvsavg}) tells us that for each $A_k(Q_j) \subset X$:

$$ \int_{R^\delta A_k(Q)}  | \eit f_1 \eit f_2| \gtrsim R^{-O(\delta)} |A_k(Q_j)| H^2. $$

We define $Y$ just as above, and this inequality lets us rigorously justify (\ref{locconst}):

$$ H |X|^{1/6} \approx H |X \cap Y|^{1/6} \lessapprox E^{O(1)} \left\||\eit f_1|^{1/2}|\eit f_2|^{1/2}\right\|_{L^6(Y)}.$$

It only remains to prove Proposition \ref{bilrefstrichM}.
 
\begin{proof}  For function $f$ with Fourier support in $B(\xi_0,1/M)$, by parabolic rescaling, we have \begin{equation} \label{parabresc}\|\eit f (x)\|_{L^p(B^3(R))} \sim M^{\frac 4 p -1}\|e^{ir\Delta}\tilde f (y)\|_{L^p(B_{R/M}\times I_{R/M^2})},\end{equation}
where $\tilde f$ has Fourier support in $B^2(0,1)$, $\|\tilde f\|_2=\|f\|_2$, the new coordinates $(y,r)$ and old coordinates $(x,t)$ are related by
$$\begin{cases} y=x/M +2t\xi_0/M, \\ r=t/M^2,\end{cases} $$
and $B_{R/M}\times I_{R/M^2}$ is a box of dimensions $\sim \frac R M \times \frac R M \times \frac{R}{M^2}$, which is the range for $(y,r)$ under the change of variables as above. By \eqref{parabresc}, we have
\begin{equation}\label{6resc}
  \left\||\eit f_1|^{1/2}|\eit f_2|^{1/2}\right\|_{L^6(Y)} \sim M^{-1/3} \left\||\eir \tilde f_1|^{1/2}|\eir \tilde f_2|^{1/2}\right\|_{L^6(\tilde Y)},
\end{equation}
where $\tilde f_1, \tilde f_2$ have $1/K$-separated Fourier supports in $B^2(0,1)$, and $\tilde Y$ is a union of $N$\, $\frac{\sqrt R}{M} \times \frac{\sqrt R}{M} \times \frac{\sqrt R}{M^2}$-boxes in $B_{R/M}\times I_{R/M^2}$, in correspondence to $Y$ under the change of variables as above.

To use Theorem \ref{bilrefstrich} to estimate $ \left\||\eir \tilde f_1|^{1/2}|\eir \tilde f_2|^{1/2}\right\|_{L^6(\tilde Y)}$, we decompose $B_{R/M}\times I_{R/M^2}$ as a union of $\frac{R}{M^2}$-balls $ Q_{k,R/M^2}$, and inside each $Q_{k,R/M^2}$ we consider the $\sqrt R/M$-cubes $Q^{(k)}$ that intersect $\tilde Y$. First, we sort the balls $Q_{k,R/M^2}$ according to the dyadic values $\|\eir \tilde f_i\|_{L^2(Q_{k,R/M^2})}$, $i=1,2$. Then inside each $Q_{k,R/M^2}$ we sort the cubes $Q^{(k)}$ according to the dyadic values $\|\eir \tilde f_i\|_{L^6(Q^{(k)})}$, $i=1,2$. We can choose balls $Q_{k,R/M^2}$, $k=1,2,\cdots, \bar W$, and inside each $Q_{k,R/M^2}$ we can choose a set of $\sqrt R/M$-cubes $Q^{(k)}_j$, $j=1,2,\cdots, N_k$, so that
\begin{equation}\label{dya3} \approx N\, \text{boxes in} \,\tilde Y\,\text{are contained in}\,\bigcup_{k=1}^{\bar W} \tilde Y_k, \end{equation} where $\tilde Y_k := \bigcup_{j=1}^{N_k}Q^{(k)}_j$, and the following conditions hold:
\begin{itemize}
\item (a). For each $i=1,2$, $\|\eir \tilde f_i\|_{L^2(Q_{k,R/M^2})}$ is essentially constant in $k=1,\cdots, \bar W$.

\item (b). For each $k=1,\cdots,\bar W$, for each $i=1,2$, $\|\eir \tilde f_i\|_{L^6(Q_j^{(k)})}$ is essentially constant in $j=1,\cdots, N_k$.

\item (c). $\left\||\eir \tilde f_1|^{1/2}|\eir \tilde f_2|^{1/2}\right\|_{L^6(\tilde Y_k)}$ is essentially constant in $k=1,\cdots, \bar W$.
\end{itemize}

Now by \eqref{dya2}, \eqref{dya3} and the condition (c) as above, for each $1\leq k \leq \bar W$ we have
$$\left\||\eir \tilde f_1|^{1/2}|\eir \tilde f_2|^{1/2}\right\|_{L^6(\tilde Y)} \lessapprox \bar W^{\frac 1 6} \left\||\eir \tilde f_1|^{1/2}|\eir \tilde f_2|^{1/2}\right\|_{L^6(\tilde Y_k)}.$$
Since tangent-to-variety condition is preserved under parabolic rescaling, we can apply Theorem \ref{bilrefstrich} to bound $ \left\||\eir \tilde f_1|^{1/2}|\eir \tilde f_2|^{1/2}\right\|_{L^6(\tilde Y_k)}$ by
$$  \lessapprox E^{O(1)} \left(\frac{R}{M^2}\right)^{-1/6}N_k^{-1/6} \left(\frac{R}{M^2}\right)^{-1/2} \prod_{i=1}^2 \left\|\eir \tilde f_i\right\|^{1/2}_{L^2(Q_{k,R/M^2})}.$$
By the condition (a) as above and parabolic rescaling \eqref{parabresc}, we have
$$\prod_{i=1}^2 \left\|\eir \tilde f_i\right\|^{1/2}_{L^2(Q_{k,R/M^2})} \lesssim \bar W^{-1/2} \prod_{i=1}^2 \|\eir \tilde f_i\|^{1/2}_{L^2(B_{R/M}\times I_{R/M^2})}  $$
$$\sim \bar W^{-1/2} M^{-1} \prod_{i=1}^2 \|\eit f_i\|^{1/2}_{L^2(B^3(R))} \lesssim \bar W^{-1/2} M^{-1} R^{1/2} \prod_{i=1}^2 \|f_i\|_2^{1/2}.$$
Combining \eqref{6resc} and the above estimates for $\left\||\eir \tilde f_1|^{1/2}|\eir \tilde f_2|^{1/2}\right\|_{L^6(\tilde Y)}$, we get
$$\left\||\eit f_1|^{1/2}|\eit f_2|^{1/2}\right\|_{L^6(Y)} \lessapprox E^{O(1)} \bar W^{-1/3}N_k^{-1/6}R^{-1/6}\prod_{i=1}^{2}\|f_i\|_2^{1/2}.$$
The above estimate holds for $\bar W$ indexes $k$'s. For each $k$, there are $N_k$\, $\frac{\sqrt R}{M}$-cubes in $\tilde Y_k$, each $\frac{\sqrt R}{M}$-cube contains
 at most $M$\, $\frac{\sqrt R}{M} \times \frac{\sqrt R}{M} \times \frac{\sqrt R}{M^2}$-boxes in $\tilde Y$, and there are $\approx N$\,
$\frac{\sqrt R}{M} \times \frac{\sqrt R}{M} \times \frac{\sqrt R}{M^2}$-boxes in $\tilde Y$ that are contained in $\bigcup_{k=1}^{\bar W} \tilde Y_k$. By pigeonholing there is an index $k$ satisfying
$$N \lessapprox N_k\bar W M.$$ Therefore
\begin{equation} \label{2L6'}
\left\||\eit f_1|^{1/2}|\eit f_2|^{1/2}\right\|_{L^6(Y)} \lessapprox E^{O(1)} \bar W^{-1/6} N^{-1/6} M^{1/6} R^{-1/6} \prod_{i=1}^2\|f_i\|_2^{1/2}\,.
\end{equation}
Since $\bar W \geq 1$, this completes the proof of Proposition \ref{bilrefstrichM}.
\end{proof}

This finishes the proof of Proposition \ref{2maxvar}. \end{proof} 

Finally, to prove Proposition \ref{Bil-prop}, we apply Proposition \ref{2maxvar} to $f_{j, tang}$ on each ball $B_j$.  We expand $f_{j, tang}$ into wave packets at the scale $\rho = R^{1 - \delta}$ on the ball $B_j$.  Because of the definition of $f_{j, tang}$, each wave packet will lie in the $R^{1/2 + \delta}$-neighborhood of $Z$ and the angles between the wave packets and the tangent space of $Z$ will be bounded by $R^{-1/2 + 2 \delta}$.  For a detailed description of the wave packet decomposition of $f_{j, tang}$ on a smaller ball, see Section 7 of \cite{lG16}.  We define $E$ so that $\rho^{1/2} E = R^{1/2 + \delta}$.  Since $\rho = R^{1 - \delta}$, we get $E = R^{(3/2) \delta}$, and so $E \rho^{-1/2} = R^{-1/2 + 2 \delta}$.  Each new wave packet lies in the $E \rho^{1/2}$-neighborhood of $Z$, and the angles between the wave packets and the tangent space of $Z$ are bounded by $E \rho^{-1/2}$.  Therefore, the new wave packets are concentrated in $\bT_Z(E)$.  Now since $E^{O(1)} = R^{O(\delta)}$, the bound from Proposition \ref{2maxvar} implies Proposition \ref{Bil-prop}.

\end{document}